\newcommand{\nn}{\nonumber}
\newcommand{\Om}{\Omega} 
\newcommand{\Ombar}{\overline{\Om}}
\newcommand{\uet}{u_{εt}}
\newcommand{\vet}{v_{εt}}
\newcommand{\ue}{u_{ε}}
\newcommand{\ve}{v_{ε}}
\newcommand{\une}{u_{0ε}}
\newcommand{\vne}{v_{0ε}}
\newcommand{\ye}{y_{ε}}
\newcommand{\f}[2]{\frac{#1}{#2}}
\newcommand{\norm}[2][]{\|#2\|_{#1}}
\newcommand{\normm}[2]{\|#2\|_{#1}}
\newcommand{\Lom}[1]{L^{#1}(\Om)}
\newcommand{\Wom}[2]{W^{#1,#2}(\Om)}
\newcommand{\Womstar}[2]{(W^{#1,#2}_0(\Om))^*}
\newcommand{\set}[1]{\{#1\}}
\newcommand{\lset}[1]{\left\{#1\right\}}
\newcommand{\setl}[1]{\left\{#1\right\}}
\newcommand{\att}{(\cdot,t)}
\newcommand{\ats}{(\cdot,s)}
\newcommand{\io}{\int_{\Om}}
\newcommand{\kl}[1]{\left(#1\right)}
\newcommand{\Loneloc}{L^1_{loc}([0,∞))}
\newcommand{\ddt}{\frac{\mathrm{d}}{\mathrm{d}t}}
\newcommand{\nf}[2]{\nicefrac{#1}{#2}}
\newcommand{\ds}{\mathrm{d}s}
\newcommand{\dsigma}{\mathrm{d}σ}
\newcommand{\dy}{\mathrm{d}y}
\newcommand{\dz}{\mathrm{d}z}
\newcommand{\wto}{\rightharpoonup}
\DeclareMathOperator{\conv}{conv}
\newtheorem{lemma}{Lemma}[section]
\newtheorem{theorem}[lemma]{Theorem}
\newtheorem{remark}{Remark}[section]
\newcounter{globalconst}
\newcounter{localconst}[lemma]
\newcommand{\newgc}[1]{\refstepcounter{globalconst}\ltx@label{#1}C_{\ref{#1}}}
\newcommand{\newlc}[2][]{\refstepcounter{localconst}\ltx@label{lc:\arabic{lemma}:#2}c_{\ref{lc:\arabic{lemma}:#2}#1}}
\newcommand{\gc}[1]{C_{\ref{#1}}}
\newcommand{\lc}[2][]{c_{\ref{lc:\arabic{lemma}:#2}#1}}
\title{Immediate smoothing and global solutions for initial data in $L^1\times W^{1,2}$ in a Keller--Segel system with logistic terms in 2D}
\author{Johannes Lankeit\footnote{Institut für Mathematik, Universität Paderborn, Warburger Str. 100, 33098 Paderborn, Germany; e-mail: jlankeit@math.uni-paderborn.de}}
\begin{document}

\maketitle 
\begin{abstract}
\noindent
 This article deals with the logistic Keller--Segel model 
\[
 \begin{cases}
u_t = \Delta u - \chi \nabla\cdot(u\nabla v) + \kappa u - \mu u^2, \\
v_t = \Delta v - v + u   
 \end{cases}
\]
 in bounded two-dimensional domains (with homogeneous Neumann boundary conditions and for parameters $\chi, \kappa\in \mathbb{R}$ and $\mu>0$), and shows that any nonnegative initial data $(u_0,v_0)\in L^1\times W^{1,2}$ lead to global solutions that are smooth in $\bar{\Omega}\times(0,\infty)$. \\
\textbf{Keywords:} chemotaxis, Keller--Segel, logistic, regularity, initial data, classical solutions\\
\textbf{Math Subject Classification (MSC2020):} 35B65, 35K45, 35A09, 35Q92, 92C17
\end{abstract}
 

\section{Introduction}

%

Chemotaxis systems \cite{KS,horstmann,BBTW} are mainly known for their ability to produce singularities in the form of blow-up (see \cite{JL,HV,mizo_win} or the recent survey \cite{lan_win_survey} for an overview), that is, initially smooth solutions cease to exist and thereby lose their regularity -- for example, converging to certain multiples of Dirac measures plus an $L^1$-function as time tends to some finite blow-up time, cf. \cite{herrero.velazquez_Singularity,nagai_blowup}. This article is concerned with the opposite setting: Given initial data $(u_0,v_0)$ that are significantly less regular than smooth functions (we will strive for $L^1$ in the first component), we will ask whether the system  
\begin{subequations}\label{thesystem}
\begin{align}
 u_t &= Δu - χ ∇\cdot(u∇v) +κu-μu^2&&\qquad \text{in } \Om\times(0,∞),\label{thesystem:u}\\
 v_t &= Δv - v + u &&\qquad \text{in } \Om\times(0,∞),\label{thesystem:v}\\
 &\qquad  ∂_{ν} u = 0 = ∂_{ν} v&&\qquad \text{on } ∂\Om\times(0,∞),\label{thesystem:bdry}\\
 &\qquad u(\cdot,0) = u_0, \; v(\cdot,0)=v_0&&\qquad \text{in } \Om,\label{thesystem:initial}
\end{align}
\end{subequations}

posed in a bounded domain $\Om\subset ℝ^2$, admits solutions that immediately become classically smooth (and answer this question affirmatively, see Theorem \ref{thm:main} below).\\

System \eqref{thesystem}, combining logistic source terms as the simplest form of population dynamics and chemotaxis -- that is, the partially directed movement in response to a chemical signal --, arises in different contexts in mathematical biology, ranging from the study of cancer \cite{cancer} to ecology \cite{ecology}. 
From a modelling perspective, 
it seems important to understand all effects emerging from the interplay of taxis and population growth terms, so as to guide (and serve as solid foundation for) the creation of further and more refined models.\\

In the classical Keller--Segel system, that is \eqref{thesystem} with $κ=μ=0$, in one-dimensional domains global bounded solutions exist for any reasonably regular initial data and the result is similar in 2D, as long as the initial mass is sufficiently small in the sense that $\io u_0<\f{4π}{χ}$. Contrastingly, in 2D domains for any $m>\f{4π}{χ}$ and in 3D for any $m>0$, one can find smooth initial data $u_0$ with $\io u_0=m$ such that the corresponding solution blows up in finite time (cf. Section 2.1 of the survey  \cite{lan_win_survey} and results cited therein, in particular \cite{horstmann_wang}).

A typical assumption on the initial data -- apart from their nonnegativity -- is that they be continuous (see, e.g., \cite[Lemma 3.1]{BBTW}). 
This requirement is necessary if one wishes to obtain classical solutions under any definition involving their continuity up to time $0$, inclusively. 
Other than that, weaker assumptions can suffice: The system has been studied, for example, with initial data in certain Besov spaces, leading to solutions that are continuous as functions with values in said spaces.

Even more extensive results concerning local solvability were achieved by Biler in \cite{biler_local_and_global} already: For two-dimensional domains, the admissible class of initial data covers all of $\Lom1$ and even some measures beyond; these solutions are mild solutions and their initial data are attained as weak-*-limit in the space $\mathcal{M}(\Om)$ of finite signed  measures. The widest range of initial data for higher-dimensional settings is treated in \cite{zhigun}: nonnegative $\Lom1$-data lead to global generalized solutions (for the price of a solution concept that is based on a complicated integral inequality for a combination of both solution components and that involves rather weak regularity information).\\

%
%

However, the solutions arising in these works 
may (and for some initial data: do; compare also the above-mentioned results on blow-up even of classical solutions) still undergo blow-up. \\

On the other hand, logistic terms (i.e. $κ\in ℝ, μ>0$ in \eqref{thesystem}), are known to have a regularizing effect in chemotaxis systems. For example, in two-dimensional domains, \eqref{thesystem} admits global classical, bounded solutions (see \cite{osaki_yagi} and \cite{tian_sublogistic} or, for the analogous parabolic--elliptic system, \cite{tello_win}); similarly, in higher-dimensional domains large values of $μ$ ensure the global existence of classical solutions (\cite{win_log_boundedness}, also \cite{nakaguchi_osaki} or \cite{tian_mu_zero}). In the case of small $μ>0$ (and initial data in $\Lom2\times \Wom12$) at least weak solutions exist globally, \cite{lan_ev_smooth}, and, moreover, become smooth after some (possibly large) time if $κ$ is small, $\Om$ is three-dimensional and convex, \cite{lan_ev_smooth}, see also \cite{giuseppe_boundedness,viglialoro_woolley}.
Nevertheless, weaker degradation (at least in parabolic-elliptic relatives of \eqref{thesystem}, cf. \cite{win_log_bu,win_log_bu_ZAMP}) or spatial inhomogeneity of appropriate form in the zero-order terms, \cite{fuest}, can still facilitate blow-up. 
 
Systems like \eqref{thesystem} are covered by the general existence result of \cite{win_superlinear_damping} even if $+κu-μu^2$ is replaced by a weaker absorption $+κu-μu^{α}$ with, merely, $α>\f{2n+4}{n+4}$: For nonnegative $(u_0,v_0)\in \Lom1\times\Lom2$, generalized solutions exist.

However, in order to allow for its wide range of applicability (including said subquadratic terms and not restricted to chemotaxis systems)
, the solution concept pursued in \cite{win_superlinear_damping} is very weak: The first component, $u$, for instance, is supposed to satisfy the weak form of a differential inequality for $\ln(1+u)$ and to belong to $L_{loc}^\infty([0,∞);\Lom1)\cap L^{α}_{loc}(\Ombar\times[0,∞))$. Under more restrictive conditions on $α$ and $v_0$ -- the borderline case in $n=2$, in fact, coinciding with the case considered in the present article --, \cite{win_superlinear_damping} then goes on to assert more regular solutions, so that $u$ additionally belongs to $L^1_{loc}([0,∞);\Wom11)$ and solves the equation in the standard weak sense.\\

The results on global existence of classical solutions reported above make us suspect that at least in the prototypical system with logistic population dynamics 
and a two-dimensional domain, even classical solutions should be achievable, notwithstanding initial data merely belonging to a space of less regular than continuous functions. 
If $κ=μ=0$ and the domain is one-dimensional (one-dimensionality is a way to prevent blow-up in the Keller--Segel system), the system can cope with initial data from $(C(\Ombar))^*\times \Lom2$ and turn them into classical solutions,  \cite{win_instant_smoothing}.

For a simplified parabolic--elliptic variant of \eqref{thesystem} (with \eqref{thesystem:v} reading $0=Δv - m(t)+u$, $m(t)=\io u(\cdot,t)$) in a radially symmetric setting, classical solutions were found for initial data lying below, essentially, a multiple of $\exp(λ|x|^{-α})$ with some $λ>0$ and $α<n$, \cite{win_regularized}. \\

For the possibly non-radial setting in the fully parabolic system \eqref{thesystem}, we will assume that the initial data comply with the requirements that
\begin{equation}\label{init}
 u_0 \in L^1(\Om),\quad u_0\ge 0, \qquad v_0\in W^{1,2}(\Om), \quad v_0\ge 0
\end{equation}
and show that then solutions result, which are immediately smooth (in the sense that they belong to $C^{2,1}(\Ombar\times(0,∞))$) and continuous as $\Lom1$-valued functions (in the weak topology of $\Lom1$) up to the initial time, i.e. belong to $C_w([0,∞);\Lom1)=C([0,∞);(\Lom1 \text{with weak topology}))$:

\begin{theorem}\label{thm:main}
 Let $\Om\subset ℝ^2$ be a bounded smooth domain and assume that $u_0, v_0$ satisfy \eqref{init}. Then there is 
 \[
  (u,v)\in 
  \kl{C^{2,1}(\Ombar\times(0,∞))\cap C_w([0,∞);\Lom1)}^2
 \]
 solving \eqref{thesystem}.
\end{theorem}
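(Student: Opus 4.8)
The plan is to obtain $(u,v)$ as a locally uniform limit of classical solutions of regularized problems, the bulk of the work consisting of a chain of $\varepsilon$-independent a priori estimates that, on time intervals bounded away from $t=0$, lift the mere $L^1$-regularity of $u_0$ all the way to $C^{2,1}$. First I would fix nonnegative $\une,\vne\in C^\infty(\Ombar)$ with $∂_ν\une=∂_ν\vne=0$, $\une\to u_0$ in $\Lom1$ and $\vne\to v_0$ in $\Wom12$ (by mollification, or by solving the heat equation for time $\varepsilon$). For each such $\varepsilon$, local existence theory for \eqref{thesystem} together with the known global boundedness of classical solutions of the two-dimensional logistic Keller--Segel system for smooth data (\cite{osaki_yagi,tian_sublogistic}) provides a global classical solution $(\ue,\ve)\in(C^{2,1}(\Ombar\times[0,∞)))^2$ with data $(\une,\vne)$. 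Two families of bounds are then available \emph{up to} $t=0$, with constants independent of $\varepsilon$: integrating \eqref{thesystem:u} and using the Cauchy--Schwarz inequality shows that $m_\varepsilon(t):=\io\ue\att$ obeys a logistic differential inequality, so $\sup_{t\ge0}m_\varepsilon(t)\le C$ and, for every $T>0$, $\mu\int_0^T\io\ue^2\le m_\varepsilon(0)+\kappa^+CT\le C(1+T)$; hence $\ue$ is bounded in $L^2(\Om\times(0,T))$ independently of $\varepsilon$. Testing \eqref{thesystem:v} by $-Δ\ve$ and by $\ve$ and inserting this bound then yields, again $\varepsilon$-uniformly and with a finite value at $t=0$ (because $\vne$ is bounded in $\Wom12$), that $\sup_{t\in[0,T]}\norm[\Wom12]{\ve\att}\le C(T)$ and $\int_0^T\io|Δ\ve|^2\le C(T)$.

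The crucial step is an \emph{immediate} $L^2$-estimate for $u$. Testing \eqref{thesystem:u} by $\ue$ and integrating the chemotaxis term by parts produces the cross term $-\f{χ}{2}\io\ue^2Δ\ve$, which by the two-dimensional inequality $\norm[\Lom4]{\ue}^2\le C(\norm[\Lom2]{∇\ue}\norm[\Lom2]{\ue}+\norm[\Lom2]{\ue}^2)$ and Young's inequality is at most $\f12\io|∇\ue|^2+C(1+\norm[\Lom2]{Δ\ve\att}^2)\io\ue^2$; after absorbing the gradient term and using $\io\ue^3\ge c_0(\io\ue^2)^{3/2}$ to exploit the logistic absorption $-\mu\io\ue^3$, one is left with
\[
 y_\varepsilon'(t)+c\,y_\varepsilon(t)^{3/2}\le g_\varepsilon(t)\,y_\varepsilon(t),\qquad y_\varepsilon(t):=\io\ue^2\att,
\]
where $g_\varepsilon(t):=C(1+\norm[\Lom2]{Δ\ve\att}^2)$ satisfies $\sup_\varepsilon\int_0^Tg_\varepsilon\le C(T)$ by the previous paragraph. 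Substituting $Y_\varepsilon:=y_\varepsilon\exp(-\int_0^tg_\varepsilon)$ removes the linear term (using $\int_0^tg_\varepsilon\ge0$) and leaves the superlinearly damped inequality $Y_\varepsilon'+cY_\varepsilon^{3/2}\le0$, which forces $Y_\varepsilon(t)\le 4c^{-2}t^{-2}$ irrespective of $Y_\varepsilon(0)$; hence $\io\ue^2\att\le C(T)\,t^{-2}$ for all $t\in(0,T]$, with $C(T)$ independent of $\varepsilon$. Integrating the differential inequality over $(\tau,T)$ in addition bounds $\ue$ in $L^2((\tau,T);\Wom12)$ and in $L^3(\Om\times(\tau,T))$ for every $0<\tau<T$, $\varepsilon$-uniformly.

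From here the regularization propagates by standard means. The $\varepsilon$-uniform bound of $\ue$ in $L^\infty_{loc}((0,∞);\Lom2)$ improves the regularity of $\ve$ via heat-semigroup estimates in \eqref{thesystem:v}, after which the boundedness argument of \cite{osaki_yagi,tian_sublogistic} -- essentially a Moser-type iteration relying on the logistic absorption and two-dimensional Gagliardo--Nirenberg inequalities -- can be rerun, now with all constants depending only on the quantities already controlled above and on $\tau$, to give $\sup_{t\in[\tau,T]}\norm[\Lom\infty]{\ue\att}\le C(\tau,T)$ independently of $\varepsilon$. Alternating parabolic $L^p$- and Schauder estimates for \eqref{thesystem:v} and then \eqref{thesystem:u} on $\Ombar\times[\tau,T]$ next yield $\varepsilon$-independent bounds for $(\ue,\ve)$ in $C^{2+\beta,1+\beta/2}(\Ombar\times[\tau,T])$ for some $\beta\in(0,1)$ and all $0<\tau<T$, so that by the Arzelà--Ascoli theorem and a diagonal extraction a subsequence of $(\ue,\ve)$ converges in $C^{2,1}_{loc}(\Ombar\times(0,∞))$ to a pair $(u,v)$ which solves \eqref{thesystem:u}--\eqref{thesystem:bdry} classically in $\Ombar\times(0,∞)$. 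For the attainment of the initial data, the $L^2(\Om\times(0,T))$-bound and a compactness argument in time give $\ve\to v$ in $C([0,T];\Lom2)$ and $v\in C([0,T];\Wom12)$ with $v(\cdot,0)=v_0$, whence $v\in C_w([0,∞);\Lom1)$; and testing \eqref{thesystem:u} against $ϕ\in C^\infty(\Ombar)$ with $∂_νϕ=0$ shows that $t\mapsto\io\ue\att\,ϕ$ is equicontinuous at $t=0$ uniformly in $\varepsilon$, with $\io\une ϕ\to\io u_0 ϕ$. Combined with the $\varepsilon$-uniform equi-integrability of $\set{\ue\att:0<\varepsilon<1,\ 0\le t\le1}$ -- inherited, via the estimates above, from that of $\set{\une:0<\varepsilon<1}$, itself a consequence of $\une\to u_0$ in $\Lom1$ -- and the Dunford--Pettis theorem, this gives $\ue\att\wto u_0$ in $\Lom1$ as $t\to0$, i.e.\ $u\in C_w([0,∞);\Lom1)$ with $u(\cdot,0)=u_0$.

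I expect the main obstacle to be the second paragraph and its continuation in the third: extracting from the coupled system a genuinely superlinearly damped differential inequality for a quantity controlling $\ue\att$ in a norm strictly stronger than the $\Lom1$-norm, in spite of the chemotactic cross term which a priori couples to $v$-regularity not yet available up to $t=0$ -- it is precisely the logistic absorption $-\mu\io\ue^{3}$ (respectively $-\mu\io\ue^{p+1}$ at the $p$-th step of the iteration) that makes this work -- and then pushing the iteration through to an $L^\infty$-bound with constants depending only on $\tau>0$ and on the weak uniform bounds of the first paragraph. The remaining, more technical, difficulty is the equi-integrability argument needed to attain the merely-$L^1$ datum $u_0$ in the weak topology of $\Lom1$.
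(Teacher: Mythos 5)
Your regularization scheme, the basic estimates valid up to $t=0$ (mass, the spacetime $L^2$ bound on $\ue$ extracted from the logistic absorption, the uniform $\Wom12$-bound for $\ve$ and the spacetime $L^2$ bound on $\Delta\ve$) and the bootstrap away from $t=0$ are sound, and your route to the interior $L^2$ bound genuinely differs from the paper's: where the paper picks a good starting time $t_0\in(\nf{\tau}{2},\tau)$ and iterates a $V$-norm estimate over time intervals on which the spacetime integral of $|\nabla\ve|^4$ is small (Lemma \ref{lem:uV}), you derive, from $t=0$ on, the inequality $y_\varepsilon'+c\,y_\varepsilon^{3/2}\le g_\varepsilon y_\varepsilon$ with $\sup_{\varepsilon}\int_0^T g_\varepsilon<\infty$ and use the superlinear damping to get $\io\ue^2\att\le C(T)t^{-2}$, independently of the possibly unbounded value $\io\une^2$. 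That argument is correct and arguably shorter; the subsequent $L^p$--$L^\infty$--Schauder bootstrap and the diagonal extraction producing a classical solution on $\Ombar\times(0,\infty)$ are standard and parallel Lemmata \ref{lem:navq}--\ref{lem:uC2}.

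The genuine gap lies in the attainment of $u_0$, i.e.\ exactly where the paper invests its main effort (Section \ref{sec:initreg}). Two assertions are made without proof and do not follow from the estimates you have. (i) Equicontinuity at $t=0$, uniformly in $\varepsilon$, of $t\mapsto\io\ue\att\,\phi$: testing \eqref{thesystem:u} produces the term $\mu\int_0^t\io\ue^2\phi$, and your only uniform control of it is $\int_0^t\io\ue^2\le C(1+t)$, which is bounded but not small as $t\to0$ uniformly in $\varepsilon$ (the pointwise bound $\io\ue^2\att\le C(T)t^{-2}$ is useless here, being non-integrable at $0$). Since $\io\une^2\to\infty$ when $u_0\notin\Lom2$, one cannot a priori exclude that an order-one amount of mass is annihilated by the quadratic absorption in an $\varepsilon$-dependent initial layer -- which is precisely the scenario that would make $u(\cdot,t)\wto u_0$ fail. (ii) Equiintegrability of $\set{\ue\att\mid \varepsilon>0,\ t\in[0,1]}$, which you claim is ``inherited, via the estimates above,'' from that of $\set{\une}$: none of your estimates propagates equiintegrability forward in time across the cross-diffusive term. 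The paper closes exactly these two holes with the $\Phi$-machinery: de la Vallée Poussin's theorem provides a smooth convex superlinear $\Phi$ with $\io\Phi(\une)\le C$, a fixed-point modification additionally enforces $\Phi''(x)\le\f1x$, and the computation of $\ddt\io\Phi(\ue)$ -- where $\Phi''\le\f1x$ lets the cross term be absorbed using the spacetime $L^4$ bound on $\nabla\ve$ -- yields $\sup_{t<T}\io\Phi(\ue\att)\le C$ and $\int_0^T\io\Phi'(\ue)\ue^2\le C$ (Lemmata \ref{lem:Phi-init}, \ref{lem:Phi-time}); the first bound is (ii), and the second, via Jensen's inequality and a convexification, gives equiintegrability in time of $t\mapsto\norm[\Lom2]{\ue\att}^2$, which supplies the uniform smallness missing in (i) (the paper phrases this as an $(W_0^{3,3}(\Om))^*$-estimate for $\uet$, Lemmata \ref{lem:normuet}--\ref{lem:ue-at0}). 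Without an ingredient of this type your Dunford--Pettis step cannot identify the limit, and the claim $u\in C_w([0,\infty);\Lom1)$ with $u(\cdot,0)=u_0$ remains unproven. (Minor: for $v$ you assert $v\in C([0,T];\Wom12)$ from ``a compactness argument''; what is needed, and what the uniform bound on $\int_0^T\io\vet^2$ readily gives, is $\Lom2$-continuity at $0$ together with weak $\Wom12$-continuity, as in Lemma \ref{lem:vcts}.)
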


The proof will proceed by approximation of these solutions by solutions to problems where global classical solvability is known, which in particular involves approximation of the initial data. 

\begin{subequations}\label{init-eps}
Given $u_0$ and $v_0$ as in \eqref{init} and $q_0>2$, for every $ε>0$ we introduce $\une$ and $\vne$ such that the following hold true: 
\begin{align}
 &\une \in C(\Ombar),\quad \vne\in W^{1,q_0}(\Om),
\label{init-eps:regularity}\\
 &\une \ge 0, \qquad \vne\ge 0\qquad \text{in } \Om,\label{init-eps:nonnegativity}\\
 &\une \to u_0 \quad \text{in } L^1(\Om) \quad \text{ as } ε→0,  \label{init-eps:conv:u}\\
 &\vne \to v_0 \quad \text{in } W^{1,2}(\Om) \quad \text{ as } ε→0, \label{init-eps:conv:v}\\
 &\norm[\Lom1]{\une}\le 2\norm[\Lom1]{u_0} \qquad \text{ for all } ε>0,\label{init-eps:bound:u}\\
 &\norm[\Wom12]{\vne}\le 2\norm[\Wom12]{v_0} \qquad \text{ for all } ε>0, \label{init-eps:bound:v}
\end{align}
\end{subequations}
and deal with solutions of the initial boundary value problem 
\begin{subequations}\label{approximatesystem}
\begin{align}
 \uet &= Δ\ue - χ∇\cdot(\ue∇\ve) + κ\ue - μ\ue^2&&\qquad \text{in } \Om\times(0,∞),\label{approximatesystem:u}\\
 \vet &= Δ\ve - \ve + \f{\ue}{1+ε\ue}&&\qquad \text{in } \Om\times(0,∞),\label{approximatesystem:v}\\
 &\qquad  ∂_{ν} \ue = 0 = ∂_{ν} \ve&&\qquad \text{on } ∂\Om\times(0,∞),\label{approximatesystem:bdry}\\
 &\qquad \ue(\cdot,0) = \une, \; \ve(\cdot,0)=\vne &&\qquad \text{in } \Om.\label{approximatesystem:initial}
\end{align}
\end{subequations}
\begin{remark}
 The approximation used here coincides with that in \cite[Sec. 4]{win_superlinear_damping}.
 In \eqref{approximatesystem:v}, we could, instead, replace $\f{\ue}{1+ε\ue}$ by $\ue$. Then global existence of solutions (cf. Lemma~\ref{lem:ex}) might be less obvious, but can, nevertheless, be gained from \cite{nakaguchi_osaki}. Otherwise, the proofs below remain essentially unaffected.
\end{remark}

For the proof of smoothness, it will suffice to deduce boundedness of the solutions in some $\Lom p$ space with sufficiently large $p$, rendering classical parabolic regularity theory applicable (cf. Lemmata \ref{lem:uinfty} to \ref{lem:uC2}). 
The process of obtaining such boundedness information will be based on a study of $\ddt\io u^2$, which due to $\io u_0^2$ possibly being infinite cannot be extended to time-intervals reaching zero, but thanks to the superlinear degradation term nevertheless provides estimates on intervals bounded away from the initial time (see Lemma \ref{lem:up}).\\

What remains is the somewhat separate investigation of the solution near time $t=0$, which has to ensure that the initial data are attained in a reasonable way. This is where the main focus of this article lies. A basic idea concerning continuity at $t=0$ would be to look for a uniform bound of $\ue(\cdot,t)$ on some interval $[0,t_0)$ and for $ε>0$, observe that this bound can be transferred to $u(\cdot,t)$, $t\in[0,t_0)$, via the approximation procedure, hope for a corresponding compactness result (maybe in some weak topology) and conclude for every sequence $(t_k)_{k\in ℕ}$, $t_k\to 0$ as $k\to \infty$ that $u(\cdot,t_k)$ converges, at least along a subsequence. 

It seems, however, that the derivation of these a priori bounds cannot be based on time derivatives of functionals like $\io u^2(\cdot,t)$ (as discussed above); even for $\io u\log u$ -- a term that is part of the helpful and often-employed Lyapunov functional for the Keller--Segel system -- the regularity of $u_0$ in \eqref{init} is clearly insufficient. 

However, as long as $u_0\in \Lom1$, there is some function $Φ$ of superlinear growth such that also $Φ(u_0)$ belongs to $\Lom1$. We will use this together with a computation of $\ddt \io Φ(u(\cdot,t))$ to finally achieve weak compactness of $\set{u(\cdot,t)\mid t\in[0,t_0)}$ in $\Lom1$. We will dedicate Section~\ref{sec:initreg} to this 
strategy, while Subsection~\ref{sec:identify}, in particular, ensures that the limit as $t\to 0$ coincides with $u_0$. 
Concerning the choice of $Φ$ (or rather the properties of $Φ$ that will be used in the proof), see also the discussion at the beginning of Section~\ref{sec:initreg}. \\

\textbf{Notation.} 
We will assume the smooth, bounded domain $\Omega\subset ℝ^2$, the parameters $κ\in ℝ$, $χ\in ℝ$, $μ>0$, initial data $(u_0,v_0)$ as in \eqref{init}, the exponent $q_0>2$ and one family $\set{(\une,\vne) | ε>0}$ obeying \eqref{init-eps} to be fixed throughout the article. Keep in mind that this, in particular, means that all constants mentioned below may depend on these objects. Constants denoted by $C_i$ with capital letter C are unique within the article (and will usually have been introduced in an earlier lemma), constants $c_i$ denoted by lowercase c are local to each proof. 
For $ε>0$, $(\ue,\ve)$ will always denote the corresponding solution of \eqref{approximatesystem}. 

\section{Existence and first \textit{a priori} estimates}
\begin{lemma}\label{lem:ex}
 For every $ε>0$, \eqref{approximatesystem} has a unique classical nonnegative solution $(\ue,\ve)$ with 
\begin{align*}
 \ue &\in C(\Ombar\times[0,\infty))\cap C^{2,1}(\Ombar\times(0,∞)) \\
 \ve &\in C(\Ombar\times[0,\infty))\cap C^{2,1}(\Ombar\times(0,∞))\cap L^{∞}_{loc}([0,∞);\Wom1{q_0}).  
\end{align*}
\end{lemma}
\begin{proof}
 Local existence and uniqueness are obtained as usual (see \cite[Lemma~3.1]{BBTW}); boundedness, and hence globality, results from the obvious boundedness of the force term $+\f{\ue}{1+ε\ue}$ in the second equation (for fixed $ε>0$).
\end{proof}

\begin{lemma}\label{lem:mass} 
 There is $\newgc{mass}>0$ such that for every $ε>0$ and every $t\in[0,∞)$,
\[
 \io \ue\att \le \gc{mass}.
\]
\end{lemma}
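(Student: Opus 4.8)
The plan is to test \eqref{approximatesystem:u} against the constant $1$, i.e.\ to integrate the first equation over $\Om$. First I would use the no-flux conditions \eqref{approximatesystem:bdry} together with the divergence theorem to see that $\io Δ\ue\att = \int_{∂\Om}∂_ν\ue\,\att = 0$ and $\io ∇\cdot(\ue∇\ve)\att = \int_{∂\Om}\ue\,∂_ν\ve\,\att = 0$ for every $t>0$. Setting $\ye(t):=\io\ue\att$ -- which by Lemma~\ref{lem:ex} is continuous on $[0,∞)$ and continuously differentiable on $(0,∞)$ -- this leaves the identity
\[
 \ye'(t) = κ\ye(t) - μ\io\ue^2\att, \qquad t>0.
\]

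Next I would estimate the absorption term from below by the Cauchy--Schwarz inequality, $\ye(t)^2 = \kl{\io\ue\att}^2 \le |\Om|\io\ue^2\att$, which turns the identity above into the logistic differential inequality
\[
 \ye'(t) \le κ\ye(t) - \f{μ}{|\Om|}\ye(t)^2, \qquad t>0.
\]
An elementary comparison argument for this scalar ODE then gives $\ye(t)\le \max\setl{\ye(0),\,\nf{κ_+|\Om|}{μ}}$ for all $t\ge0$, where $κ_+:=\max\set{κ,0}$: at any time at which $\ye$ would exceed the right-hand side, the inequality forces $\ye'<0$, so (considering the first time $\ye$ reaches this threshold) $\ye$ can never rise above $\max\set{\ye(0),\nf{κ_+|\Om|}{μ}}$.

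Finally, since $\ye(0)=\norm[\Lom1]{\une}\le 2\norm[\Lom1]{u_0}$ by \eqref{init-eps:bound:u}, this bound is independent of $ε$, and the lemma follows with $\gc{mass}:=\max\setl{2\norm[\Lom1]{u_0},\,\nf{κ_+|\Om|}{μ}}$. I do not expect any genuine obstacle here; the only points that deserve a line of justification are the vanishing of the two boundary integrals and the rigorous form of the comparison argument for the scalar ODE (an argument on the first time $\ye$ attains the threshold value, using $\ye\ge0$).
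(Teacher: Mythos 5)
Your argument is correct and follows exactly the route of the paper's proof: integrate \eqref{approximatesystem:u} over $\Om$, use Cauchy--Schwarz to obtain the logistic ODE inequality $\ye'\le κ\ye-\f{μ}{|\Om|}\ye^2$, invoke an ODE comparison argument, and control $\ye(0)$ via \eqref{init-eps:bound:u}. The only difference is that you spell out the vanishing of the boundary terms and the explicit form of the comparison bound, which the paper leaves implicit.
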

\begin{proof}
 According to the Cauchy--Schwarz inequality, $(\io\ue\att)^2 \le |\Om|\io \ue^2\att$ for all $t\in(0,∞)$ and all $ε>0$. 
 By integration of \eqref{approximatesystem:u}, the functions defined by $\ye(t):=\io \ue\att$, $t\in(0,∞)$, for $ε>0$ therefore satisfy 
\[
 \ye'(t) \le κ \ye(t) - \f{μ}{|\Om|} (\ye(t))^2 \qquad \text{for all } t\in(0,∞),
\]
as well as $\ye(0)\le 2\norm[\Lom1]{u_0}$ by \eqref{init-eps:bound:u},  and their boundedness follows from an ODE comparison argument. 
\end{proof}

\begin{lemma}\label{lem:u2xt}
 There is $\newgc{u2xt}>0$ such that for every $ε>0$ and every $t\in(0,∞)$, 
 \begin{equation}\label{bound-usqare-xt}
  \int_0^{t}\io \ue^2 \le \gc{u2xt}(1+t).
 \end{equation}
\end{lemma}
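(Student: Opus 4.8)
The plan is to integrate the first equation \eqref{approximatesystem:u} over $\Om$ — so that, by the homogeneous Neumann boundary condition \eqref{approximatesystem:bdry}, both $\io Δ\ue$ and the chemotactic integral $\io ∇\cdot(\ue∇\ve)$ drop out — and then to read off $μ\io\ue^2$ as, essentially, minus the time derivative of the mass $\io\ue$, modulo a lower-order term. Concretely, for each fixed $ε>0$ the function $t\mapsto\io\ue\att$ is continuous on $[0,∞)$ and differentiable on $(0,∞)$ with
\[
 \ddt\io\ue\att = κ\io\ue\att - μ\io\ue^2\att \qquad\text{for all }t\in(0,∞),
\]
which I would rewrite as $μ\io\ue^2\att = κ\io\ue\att - \ddt\io\ue\att$. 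Note that this is the same mass identity underlying Lemma~\ref{lem:mass}, but used now as an equality rather than after the Cauchy--Schwarz step.

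Integrating this identity over $(0,t)$ — legitimate since, for fixed $ε>0$, Lemma~\ref{lem:ex} guarantees that $\ue$, and hence all quantities above, extend continuously down to $t=0$ — the derivative term telescopes and yields
\[
 μ\int_0^t\io\ue^2 = κ\int_0^t\io\ue\ats\ds + \io\une - \io\ue\att.
\]
It then remains to bound the three terms on the right. The contribution $-\io\ue\att$ is nonpositive by the nonnegativity of $\ue$ from Lemma~\ref{lem:ex}; the term $\io\une$ is at most $2\norm[\Lom1]{u_0}$ by \eqref{init-eps:bound:u}; and, the crucial point, $\int_0^t\io\ue\ats\ds\le\gc{mass}\,t$ thanks to the uniform-in-$ε$ mass bound of Lemma~\ref{lem:mass}. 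Combining these and dividing by $μ$ gives
\[
 \int_0^t\io\ue^2 \le \f1μ\kl{2\norm[\Lom1]{u_0} + |κ|\,\gc{mass}\,t} \le \gc{u2xt}\,(1+t)
\]
with $\gc{u2xt}:=\f1μ\kl{2\norm[\Lom1]{u_0}+|κ|\,\gc{mass}}$ — a constant independent of $ε$, since each of the three ingredients is — which would complete the proof.

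There is no genuine obstacle in this argument; the only point worth emphasizing is structural. One must resist differentiating $\io\ue^2$ directly: a Gronwall-type argument starting at $t=0$ would require $\io\une^2<∞$, which the target initial data \eqref{init} need not satisfy (this is exactly the difficulty flagged in the introduction, and the reason the later pointwise-in-time $L^2$-type bounds are only available on intervals bounded away from $0$). Differentiating instead the first-order functional $\io\ue$, whose behaviour up to $t=0$ is controlled by \eqref{init-eps:bound:u}, is what makes the estimate hold on intervals reaching the initial time — at the price of yielding only the space--time integral $\int_0^t\io\ue^2$ rather than a pointwise-in-time bound.
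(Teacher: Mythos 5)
Your proposal is correct and follows essentially the same route as the paper: integrating \eqref{approximatesystem:u} over $\Om\times(0,t)$, discarding the divergence-form terms via \eqref{approximatesystem:bdry}, and controlling the right-hand side with Lemma~\ref{lem:mass}, \eqref{init-eps:bound:u} and the nonnegativity of $\ue$. The only difference is the cosmetic choice of constant ($|κ|$ and a sum instead of the paper's maximum), which changes nothing of substance.
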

\begin{proof}
 Integrating \eqref{approximatesystem:u} over $\Om\times (0,t)$ we see that 
 \[
  μ\int_0^t\io \ue^2 \le κ\int_0^t\io \ue + \io \une - \io \ue(\cdot,t) \qquad \text{for every $t>0$ and $ε>0$,}
 \]
  which due to Lemma~\ref{lem:mass} and \eqref{init-eps:bound:u} yields \eqref{bound-usqare-xt} with $\gc{u2xt}:=\f1{μ}
 \max\set{\gc{mass}κ,2\norm[\Lom1]{u_0}}$.
\end{proof}

\begin{lemma}\label{lem:vl2bound}
 There is $\newgc{v2}>0$ such that for every $ε>0$ and every $t\in[0,∞)$,
 \begin{equation}\label{bound-vsquare}
  \io \ve^2\att \le \gc{v2}(1+t) \qquad \text{and}\qquad \int_0^t\io \ve^2 \le \gc{v2}(1+t).
 \end{equation}
\end{lemma}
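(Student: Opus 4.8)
The plan is to test \eqref{approximatesystem:v} against $\ve$ itself, keeping the linear absorption term $-\ve$, and then to feed in the bound on $\int_0^t\io\ue^2$ already provided by Lemma~\ref{lem:u2xt}. Since $(\ue,\ve)$ is classical on $\Om\times(0,∞)$, multiplying \eqref{approximatesystem:v} by $\ve$ and integrating over $\Om$ gives, for every $t>0$,
\[
 \f12\ddt\io\ve^2\att = -\io|∇\ve\att|^2 - \io\ve^2\att + \io \f{\ue}{1+ε\ue}\,\ve\att.
\]
Because $0\le\f{\ue}{1+ε\ue}\le\ue$, Young's inequality bounds the last term by $\f12\io\ue^2\att + \f12\io\ve^2\att$; dropping the nonnegative Dirichlet integral, this yields the ODE inequality
\[
 \ddt\io\ve^2\att + \io\ve^2\att \le \io\ue^2\att \qquad\text{for all } t>0.
\]

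Next I would integrate this inequality. Set $w_ε(t):=\io\ve^2\att$, which is continuous on $[0,∞)$ with $w_ε(0)=\io\vne^2\le\norm[\Wom12]{\vne}^2\le 4\norm[\Wom12]{v_0}^2$ by \eqref{init-eps:bound:v}. A variation-of-constants estimate gives
\[
 w_ε(t) \le e^{-t}w_ε(0) + \int_0^t e^{-(t-s)}\io\ue^2\ats\,\ds \le w_ε(0) + \int_0^t\io\ue^2,
\]
and Lemma~\ref{lem:u2xt} bounds the last integral by $\gc{u2xt}(1+t)$, which proves the first assertion in \eqref{bound-vsquare}. For the second, integrating $w_ε' + w_ε \le \io\ue^2\att$ over $(0,t)$ and again invoking Lemma~\ref{lem:u2xt} yields $\int_0^t w_ε \le w_ε(0) + \int_0^t\io\ue^2 \le 4\norm[\Wom12]{v_0}^2 + \gc{u2xt}(1+t)$. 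Taking $\gc{v2}$ to dominate $4\norm[\Wom12]{v_0}^2$, $\gc{u2xt}$, and the initial-time contributions gives both bounds.

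I do not expect a genuine obstacle here; the computation is a standard energy estimate for the second equation. The only point deserving a line of justification is the admissibility of the testing procedure down to $t=0$: since $\vne\in\Wom1{q_0}\hookrightarrow\Lom2$ and $\ve\in C(\Ombar\times[0,∞))$, one first derives the differential inequality on $(τ,t)$ for $τ>0$, integrates there, and lets $τ\to0$ using continuity of $w_ε$ at $0$.
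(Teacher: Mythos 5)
Your proof is correct and follows essentially the same route as the paper: testing the second equation with $\ve$, absorbing the source term $\io\f{\ue\ve}{1+ε\ue}$ via Young's inequality into $\io\ue^2$ plus the dissipation $-\io\ve^2$, and then invoking Lemma~\ref{lem:u2xt} and \eqref{init-eps:bound:v}. The only cosmetic difference is that the paper obtains both bounds at once by integrating $\ddt\io\ve^2+\io\ve^2\le\io\ue^2$ directly, whereas you additionally pass through a variation-of-constants representation for the pointwise bound; both give the same constant up to trivial adjustments.
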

\begin{proof}
 Testing \eqref{approximatesystem:v} by $\ve$ and applying Young's inequality, we see that 
 \[
  \ddt \io \ve^2+\io \ve^2 = -2\io |∇\ve|^2 - \io \ve^2 + 2\io \f{\ue\ve}{1+ε\ue}\le \io \ue^2 \qquad\text{on } (0,∞),
 \]
 so that \eqref{bound-vsquare} follows from Lemma~\ref{lem:u2xt} and \eqref{init-eps:bound:v} with $\gc{v2}:=\gc{u2xt}+4\norm[\Wom12]{v_0}^2$.
\end{proof}

\begin{lemma}\label{lem:vderivativebounds}
 There are $\newgc{nav2}>0$ and $\newgc{deltav2xt}>0$ such that for every $ε>0$ and every $t\in(0,∞)$,
 \begin{equation}\label{bound-navsquare-and-deltavsquare-xt}
  \io |∇\ve\att|^2 \le \gc{nav2}(1+t), \qquad \int_0^t\io |Δ\ve|^2\le \gc{deltav2xt}(1+t).
 \end{equation}
\end{lemma}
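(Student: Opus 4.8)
The plan is to test the equation for $\ve$ against $-Δ\ve$ and exploit the bounds already available on $\io \ue^2$ and $\io \ve^2$. Concretely, multiplying \eqref{approximatesystem:v} by $-Δ\ve$ and integrating over $\Om$ gives, after an integration by parts on the time-derivative term,
\[
 \f12 \ddt \io |∇\ve|^2 = -\io |Δ\ve|^2 - \io |∇\ve|^2 - \io \f{\ue}{1+ε\ue}Δ\ve \qquad \text{on } (0,∞),
\]
where the boundary term vanishes because $∂_ν \ve=0$. Using $\big|\tfrac{\ue}{1+ε\ue}\big|\le \ue$ together with Young's inequality on the last term, $-\io \tfrac{\ue}{1+ε\ue}Δ\ve \le \tfrac12\io \ue^2 + \tfrac12\io|Δ\ve|^2$, we arrive at a differential inequality of the form
\[
 \ddt \io |∇\ve|^2 + \io |Δ\ve|^2 + 2\io |∇\ve|^2 \le \io \ue^2 \qquad \text{on } (0,∞).
\]

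From here both assertions follow. For the pointwise-in-time bound on $\io|∇\ve\att|^2$, I would drop the (nonnegative) $\io|Δ\ve|^2$ term, apply an ODE comparison argument to $y(t):=\io|∇\ve\att|^2$ using $y' \le \io \ue^2 - 2y$, and integrate; the forcing $\int_0^t\io\ue^2$ is controlled by $\gc{u2xt}(1+t)$ via Lemma~\ref{lem:u2xt}, and the initial value $\io|∇\vne|^2\le 4\norm[\Wom12]{v_0}^2$ is controlled by \eqref{init-eps:bound:v}; this yields the first estimate with a constant $\gc{nav2}$ depending only on $\gc{u2xt}$ and $\norm[\Wom12]{v_0}$. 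For the space-time bound on $\io|Δ\ve|^2$, I would instead integrate the full differential inequality over $(0,t)$, discarding the nonnegative $2\int_0^t\io|∇\ve|^2$ term, to obtain $\int_0^t\io|Δ\ve|^2 \le \io|∇\vne|^2 + \int_0^t\io\ue^2 \le 4\norm[\Wom12]{v_0}^2 + \gc{u2xt}(1+t)$, giving the second estimate with a constant $\gc{deltav2xt}$.

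The one point requiring a little care — and the closest thing to an obstacle — is the justification of testing by $-Δ\ve$ and of the integration by parts $\io ∂_t∇\ve\cdot∇\ve = -\io Δ\ve\,\ve_t$ (equivalently $\tfrac{d}{dt}\io|∇\ve|^2 = 2\io ∇\ve_t\cdot∇\ve = -2\io \ve_t Δ\ve$) at the level of regularity guaranteed by Lemma~\ref{lem:ex}; since $\ve\in C^{2,1}(\Ombar\times(0,∞))$ with $∂_ν\ve=0$, this is legitimate on every interval $[\tau,t]\subset(0,∞)$, and letting $\tau\to 0$ together with $\ve(\cdot,\tau)\to\vne$ in $W^{1,2}(\Om)$ (continuity up to $t=0$ from Lemma~\ref{lem:ex}, combined with $\vne\in W^{1,q_0}\hookrightarrow W^{1,2}$) transfers the estimates down to $\tau=0$. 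Everything else is the routine ODE comparison / integration already used in Lemmata~\ref{lem:mass}--\ref{lem:vl2bound}.
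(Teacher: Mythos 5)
Your proposal is correct and follows essentially the same route as the paper: testing \eqref{approximatesystem:v} with $-Δ\ve$, using $\f{\ue}{1+ε\ue}\le\ue$ and Young's inequality to get $\ddt\io|∇\ve|^2+\io|Δ\ve|^2\le\io\ue^2$, and then integrating in time with Lemma~\ref{lem:u2xt} and \eqref{init-eps:bound:v}. The only cosmetic differences are that the paper integrates directly (no ODE comparison is needed for the first bound) and does not spell out the regularity justification near $t=0$, which you handle via a limit $\tau\to 0$.
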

\begin{proof}
On account of Young's inequality, an integration of \eqref{approximatesystem:v} over $\Om$ results in 
 \begin{align*}
  \ddt \io |∇\ve|^2 +\io |Δ\ve|^2 &= -\io |Δ\ve|^2 -2\io |∇\ve|^2 +2\io \f{\ue}{1+ε\ue} Δ\ve
  \le  \io \ue^2
 \end{align*}
 on $(0,∞)$ for every $ε>0$. 
 Setting $\gc{nav2}:=\gc{deltav2xt}:=\gc{u2xt}+4\norm[\Wom12]{v_0}^2$,  we can rely on Lemma~\ref{lem:u2xt} and \eqref{init-eps:bound:v} to derive \eqref{bound-navsquare-and-deltavsquare-xt}.
\end{proof}

\begin{lemma}\label{lem:vt}
 There is $\newgc{vt}>0$ such that 
 \[
  \int_0^t\io \vet^2\le \gc{vt}(1+t)
 \]
 for every $t>0$ and $ε>0$.
\end{lemma}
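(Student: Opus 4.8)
The plan is the straightforward testing procedure suggested by the statement: multiply \eqref{approximatesystem:v} by $\vet$ and integrate over $\Om$. The two linear terms become time derivatives, $\io Δ\ve\,\vet = -\io ∇\ve\cdot∇\vet = -\f12\ddt\io|∇\ve|^2$ (using the Neumann condition) and $-\io \ve\vet = -\f12\ddt\io\ve^2$, while Young's inequality applied to the remaining contribution $\io \f{\ue}{1+ε\ue}\vet$ produces $\f12\io\vet^2$, which can be absorbed on the left, plus $\f12\io\f{\ue^2}{(1+ε\ue)^2}\le\f12\io\ue^2$. This yields, on $(0,∞)$ and for every $ε>0$,
\[
 \io\vet^2 + \ddt\io\kl{|∇\ve|^2+\ve^2} \le \io\ue^2 .
\]

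Next I would integrate this differential inequality over $(τ,t)$ for $0<τ<t$ and discard the nonnegative term $\io(|∇\ve\att|^2+\ve^2\att)$ arising at the upper endpoint, which leaves
\[
 \int_τ^t\io\vet^2 \le \io|∇\ve(\cdot,τ)|^2 + \io\ve^2(\cdot,τ) + \int_τ^t\io\ue^2 .
\]
Letting $τ\to0$, the left-hand side increases to $\int_0^t\io\vet^2$ by monotone convergence, $\int_τ^t\io\ue^2$ is controlled by $\gc{u2xt}(1+t)$ thanks to Lemma~\ref{lem:u2xt}, and the first two terms on the right stay bounded — by $\gc{nav2}$ via Lemma~\ref{lem:vderivativebounds} and by $\gc{v2}$ via Lemma~\ref{lem:vl2bound}. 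Hence $\int_0^t\io\vet^2\le(\gc{nav2}+\gc{v2}+\gc{u2xt})(1+t)$, i.e. the claim with $\gc{vt}:=\gc{nav2}+\gc{v2}+\gc{u2xt}$.

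The computation itself is entirely routine; the only point that deserves a moment's care is the passage $τ\to0$, because $\vne$ merely lies in $W^{1,q_0}(\Om)$ and we do not a priori know that $t\mapsto\io|∇\ve\att|^2$ is continuous down to $t=0$. This is precisely why I would invoke the uniform-in-$ε$ bound near the initial time already furnished by Lemma~\ref{lem:vderivativebounds} rather than trying to identify the limit with $\io|∇\vne|^2$; no further regularity of the approximate data is needed.
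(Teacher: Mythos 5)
Your proof is correct, but it takes a different route from the paper. The paper's argument is a one-line pointwise estimate: from \eqref{approximatesystem:v} one has $\vet^2\le 3|\Delta\ve|^2+3\ve^2+3\ue^2$ (using $\frac{\ue}{1+\varepsilon\ue}\le\ue$), so that $\int_0^t\io\vet^2\le 3\int_0^t\io|\Delta\ve|^2+3\int_0^t\io\ve^2+3\int_0^t\io\ue^2\le 3(\gc{deltav2xt}+\gc{v2}+\gc{u2xt})(1+t)$ directly by the spatio-temporal bounds of Lemmata~\ref{lem:vderivativebounds}, \ref{lem:vl2bound} and \ref{lem:u2xt} --- no testing, no boundary terms in time, no limit $\tau\searrow 0$. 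You instead test with $\vet$, which trades the $\int_0^t\io|\Delta\ve|^2$ bound (second part of Lemma~\ref{lem:vderivativebounds}) for the pointwise-in-time bounds on $\io|\nabla\ve(\cdot,\tau)|^2$ and $\io\ve^2(\cdot,\tau)$, at the price of the limiting step $\tau\searrow 0$, which you handle correctly by invoking the uniform-in-$\varepsilon$ bounds at time $\tau$ rather than any continuity of $\|\nabla\ve\|_{\Lom2}$ down to $t=0$. Both arguments draw on the same stock of a priori information; yours is the standard energy/maximal-regularity computation and would still work if the $\Delta\ve$ estimate were unavailable, while the paper's is shorter precisely because that estimate has already been established. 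One cosmetic point: Lemmata~\ref{lem:vderivativebounds} and \ref{lem:vl2bound} give $\gc{nav2}(1+\tau)$ and $\gc{v2}(1+\tau)$ at time $\tau$, not $\gc{nav2}$ and $\gc{v2}$; since $\tau<t$ (or after letting $\tau\to0$ in these factors as well), your final constant $\gc{vt}:=\gc{nav2}+\gc{v2}+\gc{u2xt}$ is nevertheless valid.
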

\begin{proof}
 By \eqref{approximatesystem:v}, for every $t>0$ and $ε>0$ we have 
 \[
  \int_0^t\io \vet^2\le 3\int_0^t\io|Δ\ve|^2 + 3\int_0^t\io\ve^2 + 3\int_0^t\io\ue^2 \le \gc{vt}(1+t)
 \]
 if we rely on Lemmata~\ref{lem:vderivativebounds}, \ref{lem:vl2bound} and \ref{lem:u2xt} and set $\gc{vt}:=3(\gc{deltav2xt}+\gc{v2}+\gc{u2xt})$.
\end{proof}

\begin{lemma}\label{lem:nav4xt}
 There is $\newgc{nav4xt}>0$ such that for every $ε>0$ and every $t\in(0,∞)$, 
 \begin{equation}\label{bound-navfour-xt}
  \int_0^t\io |∇\ve|^4 \le \gc{nav4xt}(1+t)^3.
 \end{equation}
\end{lemma}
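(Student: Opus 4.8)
The plan is to control $\|∇\ve\|_{L^4(\Om)}$ by way of the known $L^2$-bounds for $∇\ve$ and $Δ\ve$. The natural tool in a two-dimensional domain is the Gagliardo--Nirenberg inequality applied to the vector field $∇\ve$: there is a constant $c_1>0$ such that
\begin{equation}\label{gn-for-grad}
 \io |∇\ve|^4 = \normm{L^4(\Om)}{∇\ve}^4 \le c_1 \normm{L^2(\Om)}{D^2\ve}^2\normm{L^2(\Om)}{∇\ve}^2 + c_1\normm{L^2(\Om)}{∇\ve}^4
\end{equation}
for every $ε>0$ and every $t\in(0,∞)$. Here $D^2\ve$ denotes the Hessian, and in order to replace $\normm{L^2(\Om)}{D^2\ve}$ by $\normm{L^2(\Om)}{Δ\ve}$ one invokes elliptic regularity for the Neumann Laplacian; since $∂_{ν}\ve=0$ on $∂\Om$, there is $c_2>0$ with $\normm{L^2(\Om)}{D^2\ve\att}^2 \le c_2\bigl(\normm{L^2(\Om)}{Δ\ve\att}^2 + \normm{L^2(\Om)}{∇\ve\att}^2\bigr)$ for all $t>0$, $ε>0$ (this is where the smoothness of $\Om$ enters). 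Substituting this into \eqref{gn-for-grad} yields, with some $c_3>0$,
\begin{equation}\label{pointwise-in-t}
 \io |∇\ve\att|^4 \le c_3\bigl(\normm{L^2(\Om)}{Δ\ve\att}^2 + \normm{L^2(\Om)}{∇\ve\att}^2\bigr)\bigl(1+\normm{L^2(\Om)}{∇\ve\att}^2\bigr)
\end{equation}
for all $t>0$ and $ε>0$.

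Next I would integrate \eqref{pointwise-in-t} over $(0,t)$. The factor $1+\normm{L^2(\Om)}{∇\ve\att}^2$ is, by Lemma~\ref{lem:vderivativebounds}, bounded by $1+\gc{nav2}(1+t)\le (1+\gc{nav2})(1+t)$ pointwise in time, so it may be pulled out of the integral at the cost of one power of $(1+t)$:
\[
 \int_0^t\io |∇\ve|^4 \le c_3(1+\gc{nav2})(1+t)\int_0^t\bigl(\normm{L^2(\Om)}{Δ\ve\ats}^2 + \normm{L^2(\Om)}{∇\ve\ats}^2\bigr)\ds.
\]
Now Lemma~\ref{lem:vderivativebounds} gives $\int_0^t\io|Δ\ve|^2\le\gc{deltav2xt}(1+t)$, and $\int_0^t\io|∇\ve|^2\le\gc{nav2}t(1+t)\le\gc{nav2}(1+t)^2$ by pointwise integration of the first bound there; hence the remaining time integral is at most $(\gc{deltav2xt}+\gc{nav2})(1+t)^2$. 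Combining the two displays produces \eqref{bound-navfour-xt} with, say, $\gc{nav4xt}:=c_3(1+\gc{nav2})(\gc{deltav2xt}+\gc{nav2})$, since $(1+t)\cdot(1+t)^2=(1+t)^3$.

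The one genuine subtlety, rather than obstacle, is the elliptic regularity step: the two-dimensional Gagliardo--Nirenberg estimate as usually stated involves $\normm{L^2}{D^2\ve}$, not $\normm{L^2}{Δ\ve}$, and the passage between the two for functions satisfying a homogeneous Neumann condition requires the standard $H^2$ estimate $\normm{W^{2,2}(\Om)}{w}\le c\,\normm{L^2(\Om)}{(-\Delta+1)w}$ on the smooth bounded domain $\Om$; applied to $w=\ve\att$ this is legitimate because $\ve\att\in C^2(\Ombar)$ with $∂_{ν}\ve\att=0$ for each fixed $t>0$. Everything else is a bookkeeping of powers of $(1+t)$, and the slightly wasteful estimate $t(1+t)\le(1+t)^2$ is exactly what accounts for the exponent $3$ on the right-hand side of \eqref{bound-navfour-xt}.
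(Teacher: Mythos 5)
Your proposal is correct and follows essentially the same route as the paper: the Gagliardo--Nirenberg inequality applied to $∇\ve$, combined with the $L^2$ bounds on $∇\ve$ and $Δ\ve$ from Lemma~\ref{lem:vderivativebounds}, with the same bookkeeping of powers of $(1+t)$. The only difference is cosmetic: the paper cites a version of Gagliardo--Nirenberg stated directly with $\normm{L^2(\Om)}{Δφ}$ for functions satisfying the Neumann condition, whereas you use the Hessian form plus an $H^2$ elliptic estimate (note that your version with $\normm{L^2(\Om)}{∇\ve}$ on the right needs a mean-subtraction/Poincaré remark, but this is harmless, since the variant with $\normm{L^2(\Om)}{\ve}$ would equally suffice thanks to Lemma~\ref{lem:vl2bound}).
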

\begin{proof}
 We take $\newlc{gni1}>0$ and $\newlc{gni2}>0$ from a version of the Gagliardo--Nirenberg inequality (\cite[p.~126]{nirenberg} combined with \cite[Thm. I.19.1]{friedman} 
 ) stating that 
 \[
  \norm[\Lom4]{∇φ}^4 \le \lc{gni1} \norm[\Lom2]{Δφ}^2\norm[\Lom2]{∇φ}^2 + \lc{gni2} \norm[\Lom2]{∇φ}^4 
 \]
for all $φ\in \Wom22$ with $∂_{ν}φ = 0$ on $∂\Om$. Then, due to Lemma~\ref{lem:vderivativebounds},  
\begin{align*}
 \int_0^t\io |∇\ve|^4 &\le \lc{gni1}\int_0^t \norm[\Lom2]{Δ\ve}^2\norm[\Lom2]{∇\ve}^2 + \lc{gni2}\int_0^t \norm[\Lom2]{∇\ve}^4\\
 &\le \lc{gni1}\gc{nav2}(1+t)\int_0^t \norm[\Lom2]{Δ\ve}^2 + \lc{gni2}\gc{nav2}^2\int_0^t (1+t)^2\\
 &\le \lc{gni1}\gc{nav2}\gc{deltav2xt}(1+t)^2 + \lc{gni2}\gc{nav2}^2 t(1+t)^2
\end{align*}
holds for every $ε>0$ and $t\in(0,∞)$, and \eqref{bound-navfour-xt} follows with $\gc{nav4xt}:=\lc{gni1}\gc{nav2}\gc{deltav2xt}+\lc{gni2}\gc{nav2}^2$.
\end{proof}

\section{Initial regularity}\label{sec:initreg}
As announced in the introduction, the source for all regularity statements at the initial time will be the study of $\ddt\io Φ(\ue)$ for a well-chosen function $Φ$. Of $Φ$ we will require the following: 
\begin{itemize}
 \item finiteness of $\io Φ(u_0)$ (or, more precisely: boundedness of $\io Φ(\une)$), so that there is a chance for $ε$-independent bounds on $\sup_t\io Φ(\ue(\cdot,t))$,
 \item superlinear growth, in order to conclude some weak $\Lom1$ compactness; the condition can be stated as $\f{1}xΦ(x)\to \infty$ or, for differentiable functions, as $Φ'(x)\to \infty$ as $x\to \infty$,
 \item differentiability -- ideally $Φ\in C^2$ -- enabling any reasoning relying on derivatives, along with nonnegativity of $Φ$, $Φ'$ and $Φ''$,
 \item $Φ''(x)\le \f1x$, so as to facilitate estimates dealing with the cross-diffusive term during the computation of $\ddt\io Φ(\ue)$. 
\end{itemize}
The first two of these are a product of de la Vallée Poussin's theorem, also the third is often stated as part of it. Finding a proof in the literature which covers the differentiability seems  more difficult, however. As our subsequent reasoning crucially relies on it, we will recall a proof in the short Subsection~\ref{sec:poussin}. The key idea for the last of our  requirements is to diminish $Φ$ by setting $Φ_{new}''(x) = \min\set{\f1x,Φ_{old}''(x)}$. This would ensure $Φ_{new}\le Φ_{old}$ and thus preserve the first requirement; it could, however, violate the superlinear growth condition: Even if $f_1\notin L^1(ℝ_+)$ and $f_2\notin L^1(ℝ_+)$, it may happen that their pointwise minimum $\min\set{f_1,f_2}$ lies in $L^1(ℝ_+)$. We will therefore, roughly, try to set $Φ_{new}''(x)=\f1{x}$ whenever $Φ_{old}''(x)\ge\f1{x}$ but in case of $Φ_{old}''(x)<\f1{x}$ will also use $Φ_{new}''(x)=\f1{x}$, until $\int_0^x Φ_{new}''(y)dy$ has ``caught up'' with $\int_0^x Φ_{old}''(y)dy$, and only in case of $Φ_{old}''(x)<\f1{x}$ \textit{and} $Φ_{new}'(x)=Φ_{old}'(x)$ set $Φ_{new}''(x)=Φ_{old}''(x)$. Technically, we use a smoothed version of this idea and obtain the modified function $Φ$ as solution of a fixed point problem to be dealt with in Subsection~\ref{sec:locallyconvexstuff}. 

These preparations completed, in Lemma~\ref{lem:Phi-init} we will find a suitable $Φ$ and use it to ensure equiintegrability of $\set{\ue(\cdot,t)\mid ε>0, t\in[0,1)}$ in Lemma~\ref{lem:Phi-time}. 

Identification of possible limits of $u(\cdot,t)$ as $t\searrow 0$ will be the task of Subsection~\ref{sec:identify}, which will once more rely on the outcome of Lemma~\ref{lem:Phi-time}.

\subsection{De la Vallée Poussin's theorem: The source of the existence of \texorpdfstring{$Φ$}{Phi}}\label{sec:poussin}
The following proof is very close to the original by de la Vallée Poussin \cite[Remarque 23]{vallee-poussin}, although a small modification is necessary to account for smoothness (and convexity).
\begin{lemma}\label{lem:vallee-poussin}
 Let $\Om\subset ℝ^n$, $n\inℕ$, be a measurable set with finite measure and let $\set{u_{α}}_{α}$ be a family of nonnegative functions $u_{α}\colon \Om\to [0,∞]$ that is equiintegrable (also 'uniformly integrable') in the sense that 
\begin{equation}\label{unif-int}
 \forall ε>0 \; \exists δ>0\; \forall α\; \forall M\subset \Om: M \text{measurable with } |M|<δ \implies \int_M u_{α}<ε.
\end{equation}
Then there is a smooth, monotone, convex function $Φ\colon [0,∞)\to [0,∞)$ such that $Φ(0)=0$, $\lim_{x\to\infty}\f{Φ(x)}{x}=\infty$ and $\sup_{α} \io Φ(u_{α}) <\infty$.
\end{lemma}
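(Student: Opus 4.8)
The plan is to follow de la Vallée Poussin's classical argument, producing $Φ$ first as an integral of a piecewise-constant (or eventually smooth) increasing function, and then smoothing. The starting point is to turn the equiintegrability hypothesis \eqref{unif-int} into a usable quantitative statement: for a sequence $ε_k\searrow 0$ (say $ε_k = 2^{-k}$) choose corresponding $δ_k>0$ from \eqref{unif-int}, and, using that $|\set{u_{α}>λ}|\le \f1λ\io u_{α}$ together with a uniform bound on $\io u_{α}$ (which follows from \eqref{unif-int} applied with a fixed $ε$ by covering $\Om$ with finitely many pieces of measure $<δ$), pick an increasing sequence of levels $λ_k\to\infty$ such that $|\set{u_{α}>λ_k}|<δ_k$ for every $α$. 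Consequently $\int_{\set{u_{α}>λ_k}} u_{α} < ε_k = 2^{-k}$ for all $α$ and all $k$.

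Next I would define a nondecreasing step function $g\colon[0,∞)\to[0,∞)$ by $g = \sum_{k\ge 1} \mathbf{1}_{[λ_k,∞)}$, so that $g(x) = \#\set{k : λ_k\le x}\to\infty$ as $x\to\infty$, and set $Φ_0(x) = \int_0^x g(s)\,ds$. This $Φ_0$ is convex, nondecreasing, vanishes at $0$, and has superlinear growth since $Φ_0(x)/x \ge g(x/2)\cdot\tfrac12 \to\infty$ (using monotonicity of $g$). The crucial finiteness estimate is the standard layer-cake / Fubini computation:
\[
 \io Φ_0(u_{α}) = \io \int_0^{u_{α}} g(s)\,ds = \sum_{k\ge1}\io (u_{α}-λ_k)_+ \le \sum_{k\ge 1}\int_{\set{u_{α}>λ_k}} u_{α} \le \sum_{k\ge1} 2^{-k} = 1,
\]
uniformly in $α$. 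The only remaining defect is that $Φ_0$ is merely piecewise linear, not smooth.

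To obtain smoothness while preserving all four properties, I would mollify the \emph{derivative}: replace $g$ by $\tilde g = g * ζ_δ$ for a small fixed $δ>0$ and a standard nonnegative mollifier $ζ_δ$ supported in $(-δ,δ)$ (after first extending $g$ by $g(0)$ to negative reals, or equivalently shifting so the jumps stay in $(δ,∞)$), and set $Φ(x) = \int_0^x \tilde g(s)\,ds$. Then $Φ\in C^\infty$, $Φ(0)=0$, $Φ'=\tilde g\ge 0$ is nondecreasing so $Φ$ is convex and monotone, and $Φ'(x)=\tilde g(x)\to\infty$ (convolution with a probability density of a function tending to $+\infty$), giving $Φ(x)/x\to\infty$. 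For the uniform bound, note $\tilde g \le g(\cdot+δ)$ by monotonicity of $g$, hence $Φ(x)\le \int_0^x g(s+δ)\,ds = \int_δ^{x+δ} g \le Φ_0(x+δ)$; a short argument — e.g. $Φ_0(x+δ)\le Φ_0(2x)+Φ_0(2δ)$ for $x\ge\delta$ by convexity, together with $Φ_0(2x)\le C\,Φ_0(x)$ which I would arrange by spacing the $λ_k$ so that $λ_{k+1}\ge 2λ_k$ — upgrades the uniform $L^1$ bound on $Φ_0(u_{α})$ to one on $Φ(u_{α})$, absorbing the low values $u_{α}\le\delta$ into the finite constant $Φ(\delta)|\Om|$.

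The main obstacle is not any single step but the bookkeeping needed to make the mollification compatible with \emph{both} superlinearity and the uniform integral bound simultaneously: naive smoothing can destroy the clean estimate $\io Φ(u_{α})\le 1$, so one must either (as above) control $Φ$ by a dilate of $Φ_0$ using a doubling condition built into the choice of levels $λ_k$, or alternatively smooth each corner on a scale so small that the change in $Φ_0$ is summably tiny. I expect the authors take essentially this route, with the extra care in Subsection~\ref{sec:locallyconvexstuff} reserved for the later, more delicate requirement $Φ''(x)\le \f1x$ rather than for Lemma~\ref{lem:vallee-poussin} itself.
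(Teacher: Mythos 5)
Your proposal is correct, and it follows the same classical de la Vallée Poussin skeleton as the paper: extract a uniform $L^1$ bound, choose levels whose super-level tail integrals are uniformly summable, define $\Phi$ as the primitive of a nondecreasing unbounded derivative that increases at those levels, and bound $\int_\Omega \Phi(u_\alpha)$ by the tail sums. The difference lies in how smoothness is obtained, which is precisely the point the paper flags as requiring a "small modification": the paper builds the derivative $\phi$ smooth from the start, interpolating on each interval $[M_k,M_{k+1}]$ between the plateau heights $1/\varepsilon_{k-1}$ and $1/\varepsilon_k$ by a fixed $C^\infty$ transition function with all derivatives vanishing at the endpoints, choosing the $M_k$ so that $\int_{\{u_\alpha\ge M_k\}}u_\alpha<\varepsilon_k^2$; the bound then follows directly from $\Phi(x)\le x\phi(x)$ with no post-processing. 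You instead prove everything for the piecewise-linear $\Phi_0$ and mollify afterwards, paying with the dilation/doubling argument ($\Phi(x)\le\Phi_0(x+\delta)$, geometric spacing $\lambda_{k+1}\ge2\lambda_k$). That route works, but note the doubling inequality $\Phi_0(2x)\le C\,\Phi_0(x)$ cannot hold for all $x$ (since $\Phi_0\equiv0$ on $[0,\lambda_1]$ while $\Phi_0(2x)>0$ there); you need $\Phi_0(2x)\le C\,\Phi_0(x)+C'$, with the bounded range absorbed into $C'|\Omega|$ -- a minor repair in the spirit of what you already sketch. The paper's direct smooth construction buys freedom from this bookkeeping altogether, while your version is more modular (separating the measure-theoretic estimate from the smoothing). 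Your closing guess is also accurate: the delicate extra requirement $\Phi''(x)\le 1/x$ is handled separately in Subsection~\ref{sec:locallyconvexstuff}, not in Lemma~\ref{lem:vallee-poussin}.
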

\begin{proof}
 Firstly, \eqref{unif-int} implies the existence of some $\newlc{l1}>0$ such that $\io u_{α}\le \lc{l1}$ for all $α$.
 We let $(ε_k)_{k\inℕ}\subset (0,1)$ be a decreasing sequence such that $\sum_{k=1}^\infty ε_k <\infty$ and set $ε_{-1}:=ε_0:=1$. We furthermore introduce an increasing sequence $(M_k)_{k\inℕ}$ such that for every $k\inℕ$
\[
 \int_{\set{u_{α}\ge M_k}} u_{α} < ε_k^2 \qquad \text{for all } α
\]
and let $M_0:=0$. (Such a sequence exists by \eqref{unif-int} and because $|\set{u_{α}\ge M_k}|\le \f1{M_k} \sup_{α} \io u_α\le \f{c_1}{M_k}$ for every $α$ and for every $k\in ℕ$.) \\
With a function $ζ\in C^{∞}([0,1])$ such that $ζ^{(k)}(0)=ζ^{(k)}(1)=0$ for all $k\in ℕ$, $ζ(0)=0$, $ζ(1)=1$ and $ζ'\ge 0$ in $(0,1)$, we define a function $ϕ\colon[0,∞)\to [0,∞)$ by setting 
\[
 ϕ(x) := \kl{1-ζ\kl{\f{M_{k+1}-x}{M_{k+1}-M_k}}}\f1{ε_k} + ζ\kl{\f{M_{k+1}-x}{M_{k+1}-M_k}}\f1{ε_{k-1}}
\]
for $x\in[M_k,M_{k+1}]$, $k\inℕ_0$, and note that $$ϕ'(x)=\f1{M_{k+1}-M_k}\kl{\f1{ε_k}-\f1{ε_{k-1}}}ζ'\kl{\f{M_{k+1}-x}{M_{k+1}-M_k}} \ge 0$$ and that $ϕ\in C^{∞}([0,∞))$. Introducing $Φ(x):=\int_0^xϕ(y)dy$, $x\in[0,∞)$, we observe that $Φ$ is smooth, monotone and convex and $Φ(x)\le xϕ(x)$ for all $x>0$. Therefore, for every $α$, 
\begin{align*}
 \io Φ(u_{α}) &= \sum_{k=0}^\infty \int_{\set{M_k\le u_{α}\le M_{k+1}}} Φ(u_{α}) 
 \le \sum_{k=0}^\infty \int_{\set{M_k\le u_{α}\le M_{k+1}}} u_{α}ϕ(u_{α}) \\
&= \io u_{α} + \sum_{k\in ℕ}\f1{ε_k}\int_{\set{M_k\le u_{α}}} u_{α} \le \lc{l1} + \sum_{k\inℕ} ε_k <\infty. 
\end{align*}
Finally, 
\[
 \lim_{x\to\infty}\f{Φ(x)}x = \lim_{x\to\infty} ϕ(x) = \lim_{k\to\infty} \f1{ε_k} = \infty. \qedhere
\]
\end{proof}

\subsection{Adjusting the second derivative of \texorpdfstring{$Φ$}{Phi}}
\label{sec:locallyconvexstuff}
As we wish to obtain a modified variant of $Φ$ complying with all of the conditions mentioned at the beginning of Section~\ref{sec:initreg} from an application of Schauder's fixed point theorem, let us isolate a technical prerequisite in the following lemma. 
\begin{lemma}\label{lem:set-cpt}
 Let $f\in \Loneloc$ be nonnegative. The set 
\begin{equation}\label{def:X}
 X_f := \set{h\in\Loneloc\mid 0\le h\le f \quad a.e.}
\end{equation}
is a convex subset of $\Loneloc$ which is compact in the weak topology of each of the spaces $L^1((0,n))$, $n\inℕ$.
\end{lemma}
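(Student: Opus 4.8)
The plan is to establish the two assertions—convexity and weak compactness—separately, since convexity is essentially immediate while the compactness claim is where the real work lies. For convexity, given $h_1, h_2 \in X_f$ and $t \in [0,1]$, the pointwise bounds $0 \le th_1 + (1-t)h_2 \le tf + (1-t)f = f$ hold a.e., and membership in $\Loneloc$ is clear, so $X_f$ is convex.

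For the compactness statement, fix $n \in ℕ$ and consider $X_f$ as a subset of $L^1((0,n))$. First I would observe that every $h \in X_f$ satisfies $\norm[L^1((0,n))]{h} \le \norm[L^1((0,n))]{f} =: c$, which is finite since $f \in \Loneloc$; so $X_f$ is a bounded subset of $L^1((0,n))$. The natural tool is the Dunford–Pettis theorem: a subset of $L^1$ over a finite measure space is relatively weakly compact if and only if it is bounded and equiintegrable (uniformly integrable). Equiintegrability of $X_f$ follows from the single dominating function $f$: for measurable $M \subset (0,n)$, we have $\int_M |h| \le \int_M f$ for all $h \in X_f$, and $\int_M f \to 0$ as $|M| \to 0$ by absolute continuity of the integral of the fixed $L^1$-function $f\restriction_{(0,n)}$. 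Hence $X_f$ is relatively weakly compact in $L^1((0,n))$.

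It then remains to check that $X_f$ is weakly closed in $L^1((0,n))$, so that it is actually weakly compact, not merely relatively so. Since $X_f$ is convex (as shown above) and closed in the norm topology of $L^1((0,n))$—closedness under norm limits is easy, as a norm-convergent sequence has an a.e.-convergent subsequence, preserving the pointwise bounds $0 \le h \le f$—Mazur's theorem (convex norm-closed sets are weakly closed) gives weak closedness. Combining with relative weak compactness yields that $X_f$ is weakly compact in $L^1((0,n))$. Since this holds for every $n \in ℕ$, the proof is complete.

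The main obstacle, such as it is, is simply invoking the correct functional-analytic machinery in the right order: Dunford–Pettis for relative weak compactness via equiintegrability, plus Mazur (or direct verification) for weak closedness of the convex set. No delicate estimates are needed—everything is dominated by the fixed function $f$—so the proof is essentially a bookkeeping exercise once the right theorems are identified. One subtlety worth a remark is that the statement restricts attention to $L^1((0,n))$ for each $n$ rather than claiming compactness in $\Loneloc$ directly; this is because $\Loneloc$ with its Fréchet topology is the natural ambient space for the fixed-point argument in the next subsection, and weak compactness in each truncation is exactly what Schauder's theorem will require there.
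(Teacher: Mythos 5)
Your proof is correct and follows essentially the same route as the paper: both rest on the Dunford--Pettis theorem, with boundedness and equiintegrability supplied by the single dominating function $f$. The only difference is bookkeeping -- the paper argues sequentially and invokes the Eberlein--Shmulyan theorem, whereas you use the topological form of Dunford--Pettis together with Mazur's theorem for weak closedness of the convex, norm-closed set, which if anything makes explicit the step that the weak limit again satisfies $0\le h\le f$ a.e., a point the paper states without comment.
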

\begin{proof}
 By the Eberlein--Shmulyan theorem, \cite[Thm.~V.6.1]{dunford-schwartz}, it is sufficient to check for sequential weak compactness. 
Let $(h_k)_{k\in ℕ}$ be a sequence in $X_f$. Given $n\inℕ$ we observe that $\set{\norm[L^1((0,n))]{h_k}}_{k\in ℕ}$ is bounded (by $\norm[L^1((0,n))]{f}$) and that 
\[
 \forall ε>0\; \exists δ>0\; \forall M\subset(0,n):\; M\text{ measurable with } |M|<δ \implies \int_M f<ε,
\]
which due to $0\le h_k\le f$ for all $k\in ℕ$ implies 
\[
 \forall ε>0\; \exists δ>0\; \forall k\in ℕ\;\forall M\subset(0,n):\; M\text{ measurable with } |M|<δ \implies \int_M h_k<ε.
\]
The sequence $\set{h_k}_{k\inℕ}$ therefore is equiintegrable on $(0,n)$ and by the Dunford--Pettis theorem (\cite[Thm. IV.8.9]{dunford-schwartz}) there is a subsequence 
which converges weakly in $L^1((0,n))$ to some $h\in L^1((0,n))$
, where $h\le f$ a.e. on $(0,n)$. 
Convexity of the set is obvious. 
\end{proof}

We will, in Lemma~\ref{lem:Phi-init}, obtain $Φ$ (or rather $Φ''$) from an application of the following lemma to $f=Φ''$ with $Φ$  from Lemma~\ref{lem:vallee-poussin} and $g(x)=\f1x$.
\begin{lemma}\label{lem:adjustPhi}
 Let $f\in C([0,∞))$ and $g\in C((0,∞))$ be nonnegative functions and such that $\lim_{x\to 0^+} g(x)\in [0,∞]$ exists and $f\not\in L^1((0,∞))$ as well as $g\not\in L^1((1,∞))$. Then there is $h\in C([0,∞))$ such that 
\begin{equation}\label{h:nice}
0\le h(x)\le g(x)\qquad \text{ and }\qquad \int_0^xh(y)dy\le \int_0^x f(y)dy
\end{equation}
for all $x>0$ and $h\notin L^1((0,∞))$. 
\end{lemma}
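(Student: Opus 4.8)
The plan is to construct $h$ by a greedy/catch-up procedure that keeps the integral of $h$ pinned below that of $f$, uses $g$ as a pointwise cap, but borrows extra mass (beyond what $g$ alone would allow) precisely on the sets where $g$ is large relative to the ``deficit'' accumulated so far, so that non-integrability of $g$ on $(1,\infty)$ forces non-integrability of $h$. Concretely, define the deficit function $D(x) := \int_0^x f(y)\,dy - \int_0^x h(y)\,dy$, which we want to keep $\ge 0$; the idea is to let $h(x)=\min\{g(x),\,\text{something}\}$ where the ``something'' is chosen to be as large as the current deficit $D(x)$ comfortably permits. A clean way to package this without worrying about solvability of an ODE with discontinuous right-hand side is to \emph{smooth} it: I would look for $h$ as a fixed point, but since a standalone existence lemma is wanted here, the simpler route is to define $h$ explicitly in terms of an auxiliary nondecreasing function.

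Here is the explicit construction I would carry out. Let $F(x):=\int_0^x f$, which is continuous, nondecreasing, $F(0)=0$, and $F(x)\to\infty$ since $f\notin L^1$. Split $[0,\infty)$ into the ``catch-up region'' and the ``steady region'': on an initial interval $[0,x_0]$ simply set $h=\min\{f,g\}$ truncated so that $\int_0^x h\le F(x)$ trivially (e.g. $h=0$ near $0$ if $g$ blows up and is non-integrable there — but note $g$ may well be integrable near $0$, e.g. $g(x)=1/x$ is \emph{not}, so one must be a little careful; the hypothesis only gives $g\notin L^1((1,\infty))$). The core of the argument lives on $[1,\infty)$. There I would define $H(x):=\int_1^x h$ recursively/implicitly by the relation
\[
 h(x) = \min\Bigl\{g(x),\; F(x) - F(1) - H(x) + \textstyle\frac12 g(x)\Bigr\},
\]
or more transparently: let $h(x)=g(x)$ as long as doing so keeps $\int_1^x g \le \int_1^x f$ plus the carried-over slack, and when $g$ temporarily overshoots $f$, cap $h$ at $f$ but \emph{also} allow $h(x)=g(x)$ to continue draining the accumulated surplus. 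The point is that $\int_1^x h \ge \min\{\int_1^x f,\ \int_1^x g\}$ up to bounded error: either $f$ wins infinitely often (then $\int h$ inherits $\int f=\infty$), or eventually $g\le f$ on a tail, in which case $h=g$ there and $\int h$ inherits $\int_1^\infty g=\infty$. To make $h$ continuous rather than merely measurable, I would replace the hard $\min$ with a $C^0$ interpolation, or equivalently mollify the bang-bang choice on each unit interval while preserving the two inequalities in \eqref{h:nice} (mollification only decreases $\sup$ and averages integrals, and one can arrange it to respect both $h\le g$ — using lower semicontinuity / a slightly shrunk cap — and the running-integral bound by a small safety margin $\sum 2^{-k}$ absorbed from the slack).

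The key steps in order: (1) reduce to $[1,\infty)$, handling $[0,1]$ by any crude choice (e.g. $h\equiv 0$ there, which costs nothing and makes $D(1)=F(1)\ge 0$ a genuine positive slack unless $F(1)=0$, in which case shift the base point to where $F$ first becomes positive — $F$ is not identically zero since $f\ge0$, $f\in C$, $f\notin L^1$); (2) on $[1,\infty)$, formalize the catch-up recursion, proving the running-integral inequality $H(x)\le F(x)-F(1)$ holds for all $x$ by a continuity/"first failure time" argument (if it ever fails at some $x^\*$, then just before $x^\*$ we had $h=g$ and were still below, so by continuity equality holds at $x^\*$ with $h(x^\*)\le g(x^\*)$, contradicting a strict overshoot); (3) prove $h\notin L^1$ via the dichotomy above, i.e. show $\int_1^\infty h = \infty$ because $\sup_{x}\bigl(F(x)-F(1)-H(x)\bigr)<\infty$ would force $h\ge g/2$ on a tail while $\int g=\infty$, and otherwise $H(x)\to\infty$ directly; (4) upgrade to continuity by mollification with the $\sum 2^{-k}$ safety budget.

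The main obstacle I expect is reconciling \emph{three} competing demands simultaneously — $h\le g$ pointwise, $\int_0^x h\le\int_0^x f$, and $\int_0^\infty h=\infty$ — in a way that is robust to $g$ possibly being integrable near $0$, possibly oscillating, or being much larger than $f$ on sparse sets. The honest resolution is the dichotomy in step (3): one does not need $h$ to track $f$ \emph{or} $g$ everywhere, only to guarantee that on the tail $h$ equals whichever of $\{f\text{-increments},\,g\}$ is ``the bottleneck,'' and both bottlenecks have infinite integral. Getting the bookkeeping of the accumulated slack clean — so that the slack is always nonnegative yet large enough to feed $h=g$ whenever $g$ is small — is the delicate part; the $\sup_x D(x)$ finiteness-versus-infinity case split is what ultimately makes it go through, and it is essentially the only place the hypothesis $g\notin L^1((1,\infty))$ is used.
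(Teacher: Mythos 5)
Your overall strategy --- switch between ``catch-up mode'' $h=g$ and ``safe mode'' $h\le f$ according to the accumulated deficit $\int_0^x(f-h)$, and then get $h\notin L^1$ from the dichotomy ``deficit bounded $\Rightarrow$ $\int h=\infty$ since $\int f=\infty$; deficit large $\Rightarrow$ $h=g$ on a tail and $g\notin L^1((1,\infty))$'' --- is exactly the idea of the paper. But the one precise construction you offer does not work. With $H(x)=\int_1^x h$ and
\[
 h(x)=\min\Bigl\{g(x),\,F(x)-F(1)-H(x)+\tfrac12 g(x)\Bigr\},
\]
the invariant $D(x):=F(x)-F(1)-H(x)\ge 0$ is \emph{not} preserved: at $D=0$ your rule still allows $h=\tfrac12 g$, which may exceed $f$. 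Concretely, take $f\equiv 1$, $g\equiv 100$ (both admissible). Then for $D\le 50$ one has $D'=1-(D+50)=-49-D$ with $D(1)=0$, so $D(x)=-49+49e^{-(x-1)}\to-49$; hence $\int_0^xh-\int_0^xf\to 48$, which swamps any fixed slack you reserved on $[0,1]$, and the second inequality in \eqref{h:nice} fails. Your step (2) ``first failure time'' argument does not repair this: at the first failure point the conclusion you draw is $h(x^\ast)\le g(x^\ast)$, which is irrelevant since $g$ may be much larger than $f$; what the argument needs is that the rule already forces $h\le f$ \emph{whenever the deficit is small}, and your formula does not have that property. The alternative ``transparent'' bang-bang description is the discontinuous feedback rule whose well-posedness you explicitly set out to avoid, and as phrased (``cap $h$ at $f$ but also allow $h=g$ to drain the surplus'') it is not a definition.

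The paper's proof is precisely the rigorous version of your plan, and it resolves the difficulty you sidestepped in two ways you should note. First, the switching is done through a smooth cutoff $φ$ of the accumulated difference $\int_0^x(h-f)$ with two thresholds: $h=g$ only when this quantity is $\le -1$, and $h=f$ (or $h=g\le f$ where $f\ge g$) as soon as it is $\ge-\tfrac12$; this is what makes the first-failure-time argument for $\int_0^xh\le\int_0^xf$ actually close, and it simultaneously gives continuity of $h$ without any mollification step. Second, since the definition is implicit ($h$ appears inside its own integral), existence is obtained from Schauder's fixed point theorem on the convex set $X_{\max\{f,g\}}$, compact in the weak $L^1((0,n))$ topology (Lemma~\ref{lem:set-cpt}), pasted over the intervals $(0,n)$ --- i.e.\ the fixed-point route you mentioned and then discarded is the one that works. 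Your treatment of the region near $0$ and your non-integrability dichotomy are fine in spirit (the paper handles a possible singularity of $g$ at $0$ by redefining $g:=f$ there, and derives $h\notin L^1$ by noting that $h\in L^1$ together with $\int_0^xf\to\infty$ would force $h=g$ on a tail), but as it stands the core construction in your proposal is incorrect and would need to be replaced by something like the paper's cutoff-plus-fixed-point argument.
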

\begin{proof}
 Without loss of generality, we may assume that $g\in C([0,∞))$. (For if $g\ge f$, the choice $h:=f$ suffices to conclude, and if otherwise $g\notin\Loneloc$, then $x_0:=\inf\set{x>0\mid f(x)=g(x)}$ is positive and redefining $g(x):=f(x)$ for $x\in(0,x_0)$ will ensure $g\in C([0,∞))$ and will not affect the construction in \eqref{def:h} below.) \\
We pick a cut-off function $φ\in C^{∞}(ℝ)$ such that $φ\equiv 0$ on $[-\f12,∞)$, $φ\equiv 1$ on $(-∞,-1]$ and $φ'\le 0$ in $ℝ$ and note that $φ$ is Lipschitz continuous with a Lipschitz constant that we will denote by $L_{φ}$. 
For $h\in \Loneloc$ we then define $Φ(h)$ by 
\begin{align}\label{def:h}
 &Φ(h)(x):=
 &\begin{cases}
           g(x),& \text{if } f(x)\ge g(x),\\
	    φ\kl{\int_0^x(h(y)-f(y))dy} g(x) + \kl{1-φ\kl{\int_0^x(h(y)-f(y))dy}}f(x), &\text{if } f(x)<g(x)
          \end{cases}
\end{align}
for $x\in(0,∞)$. Apparently, $Φ(h)\in C([0,∞))$. 
Moreover, for $h_1,h_2\in\Loneloc$ and $x\in(0,∞)$, we have 
\begin{align}\nn
 &|(Φ(h_1)-Φ(h_2))(x)|=\begin{cases}
                       0& \text{if } f(x)\ge g(x),\\
			\lvert g(x)-f(x)\rvert\kl{φ(\int_0^x h_1-f)-φ(\int_0^x h_2-f)}&\text{if } f(x)<g(x)
                      \end{cases}\\
&\qquad \le \lvert g(x)-f(x)\rvert L_{φ}\left\lvert\int_0^x (h_1-h_2)\right\rvert \le \lvert g(x)-f(x) \rvert L_{φ}\left\lvert\int_0^x (h_1-h_2)\right\rvert.\label{forcontinuity}
\end{align}
We show that $Φ$ is sequentially continuous as map from $\Loneloc$ with weak topology to $\Loneloc$ with its usual topology. Let $(h_{1,k})_{k\inℕ} \subset \Loneloc$ be a sequence which weakly converges to $h_2\in \Loneloc$. Let $n\inℕ$. Then, for every $x\in (0,n)$, $\int_0^x (h_{1,k}-h_2) \to 0$ as $k\to \infty$. Moreover, there is $\newlc{l1bd}>0$ such that $\left\lvert\int_0^x (h_{1,k}-h_2)\right\rvert\le \norm[L^1((0,n))]{h_{1,k}}+\norm[L^1((0,n))]{h_2}\le \lc{l1bd}\in L^1((0,n))$ for every $x\in(0,n)$ and $k\inℕ$. 
An integration of \eqref{forcontinuity} and Lebesgue's theorem therefore show that 
\[
 \norm[L^1((0,n))]{Φ(h_{1,k})-Φ(h_2)} \le \normm{C([0,n])}{f-g} L_{φ}\int_0^n\left\lvert\int_0^{x} (h_{1,k}-h_2)\right\rvert\mathrm{d}x\to 0
\]
as $k\to \infty$ and thus the desired sequential continuity. 
Moreover, by definition \eqref{def:h}, $0\le Φ(h)(x)\le \max\set{f,g}$ for every $h\in \Loneloc$, or, in the notation from \eqref{def:X}, $Φ(h)\in X_{\max\set{f,g}}$. 
In order to use the compactness statement from Lemma~\ref{lem:set-cpt}, let us introduce $Φ_1\colon X_{\max\set{f,g}}\to X_{\max\set{f,g}}$, $Φ_1(h)(x)=Φ(h)(x)$, $x\in(0,1)$, $h\in X_{\max\set{f,g}}$, where we consider $X_{\max\set{f,g}}$ as a subset of the space $L^1((0,1))$ with its weak topology. 

The set $X_{\max\set{f,g}}$ is convex and compact (cf. Lemma~\ref{lem:set-cpt}) and $Φ_1$ maps $X_{\max\set{f,g}}$ into itself and is continuous (for the step from sequential continuity as shown above to continuity we rely on weak compactness of the set and the resulting metrizability of its weak topology, cf. \cite[Thm. V.6.3]{dunford-schwartz}). Schauder's theorem (in the form of, e.g., \cite[Thm. V.10.5]{dunford-schwartz}) ensures the existence of a fixed point $h_1\in L^1((0,1))$. If $h_n\in L^1((0,n))$ has been constructed for some $n\inℕ$, we let $Φ_{n+1}\colon X_{\max\set{f,g}}\subset L^1((0,n+1)) \to X_{\max\set{f,g}}$, 
\[
 Φ_{n+1}(h)(x) = \begin{cases}
                  h_n(x),& x\in(0,n),\\
                  Φ(h)(x),& x\in[n,n+1),
                 \end{cases}
\]
for $h\in L^1((0,n+1))$, and by the same reasoning as before obtain a fixed point $h_{n+1}$ of $Φ_{n+1}$. We note that $h_{n_2}|_{(0,n_1)}=h_{n_1}$ for every $n_2\ge n_1$ and that $Φ(h_n)=h_n$ on $(0,n)$ for $n\inℕ$. Defining $h(x):=h_n(x)$ for $x\in(n-1,n)$, we obtain some $h\in\Loneloc$ such that $h=Φ(h)\in C([0,∞))$.
The construction in \eqref{def:h} ensures that $h\le g$. In order to see the second part of \eqref{h:nice}, we define $ψ(x):=\int_0^x(h(y)-f(y))dy$ and observe that due to continuity of $h$ and $f$, $ψ\in C^1([0,∞))$. We let $x_0:=\inf\set{x\in[0,∞)\mid ψ(x)>0}$ (assuming that this set were non-empty), so that $ψ(x_0)=0$. Moreover, by this definition as an infimum and by continuity of $ψ$, we then could find $x_1>x_0$ such that $ψ(x_1)>0$ and that $ψ(x)>-\f12$ (and hence $φ(ψ(x))=0$) for all $x\in(x_0,x_1)$. For any $x\in(x_0,x_1)$ we would then conclude from \eqref{def:h} that $h(x)=g(x)\le f(x)$ or $h(x)=φ(ψ(x))g(x)+(1-φ(ψ(x)))f(x)=f(x)$, which would lead to the contradiction $0<ψ(x_1)=ψ(x_0) + \int_{x_0}^{x_1} (h(y)-f(y))dy\le ψ(x_0) + 0=0$.\\
Finally, in order to show $h\notin L^1((0,∞))$, we assume that $h\in L^1((0,∞))$ and from nonnegativity of $h$ could infer the existence of $\newlc{hl1}>0$ such that $\int_0^x h(y)dy\le \lc{hl1}$ for all $x>0$. As $\int_0^x f(y)dy\to \infty$ as $x\to \infty$ (due to $0\le f\notin L^1((0,∞))$), there were $x_2\in [0,∞)$ such that $\int_0^x f(y)dy \ge\lc{hl1}+1 > \int_0^x h(y) dy +1 $ for all $x>x_2$. Therefore, for every $x>x_2$, $φ(\int_0^x h_1-f) = 1$, and hence $h(x)=g(x)$ according to \eqref{def:h}. Thus, $\int_0^{∞} h(y)dy \ge \int_{x_2}^{∞} h(y)dy = \int_{x_2}^{∞} g(y)dy = ∞$, contradicting the assumption $h\in L^1((0,∞))$.
\end{proof}

\subsection{Equiintegrability}
\begin{lemma}\label{lem:Phi-init}
 There is a function $Φ\colon [0,∞)\to [0,∞)$ such that 
 \begin{align}
  &Φ \text{ is twice differentiable, monotone and convex} \label{Phi:regularity}\\
  &\f{Φ(x)}{x} \to \infty \quad \text{ and } Φ'(x) \to \infty \qquad \text{as } x\to \infty\label{Phi:xtoinfty}\\
  &Φ''(x)\le \f1x \qquad \text{for all } x>0\label{Phi:Phi2leq}
 \end{align}
 and such that with some $\newgc{Phi-init}>0$ 
 \begin{equation}
  \io Φ(\une)\le \gc{Phi-init} \label{eq:Phiune}
 \end{equation}
 holds for every $ε>0$.
\end{lemma}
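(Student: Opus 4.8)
The plan is to build $Φ$ in two stages. First I would invoke de la Vallée Poussin's theorem (Lemma~\ref{lem:vallee-poussin}) to obtain an auxiliary function $Φ_0$ enjoying all of the required properties \emph{except} possibly the pointwise bound \eqref{Phi:Phi2leq} on the second derivative. Then I would ``repair'' the second derivative by means of Lemma~\ref{lem:adjustPhi} -- which is tailor-made so that pushing $Φ_0''$ below $x\mapsto\f1x$ does not destroy superlinear growth -- and recover the desired $Φ$ by integrating the repaired second derivative twice.

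For the first stage, note that the family $\set{\une\mid ε>0}$ is equiintegrable in the sense of \eqref{unif-int}: by \eqref{init-eps:bound:u} and Chebyshev's inequality the sets $\set{\une>M}$ have measure at most $\f{2\norm[\Lom1]{u_0}}{M}$ uniformly in $ε$, and \eqref{init-eps:conv:u} together with absolute continuity of $\int u_0$ then controls $\sup_ε\int_{\set{\une>M}}\une$ for large $M$ (should one be wary of values of $ε$ bounded away from $0$, one may restrict to $ε\in(0,1)$, which is all that the sequel uses, or demand equiintegrability already in \eqref{init-eps}). Applying Lemma~\ref{lem:vallee-poussin} to this family yields a smooth, monotone, convex $Φ_0\colon[0,∞)\to[0,∞)$ with $Φ_0(0)=0$, $\f{Φ_0(x)}{x}\to∞$ as $x\to∞$ and $\sup_{ε>0}\io Φ_0(\une)<∞$. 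Since $Φ_0$ is differentiable, convex and vanishes at $0$, the tangent-line inequality gives $Φ_0'(x)\ge\f{Φ_0(x)}{x}\to∞$, so that $Φ_0''$ -- continuous and nonnegative on $[0,∞)$ -- satisfies $\int_0^∞Φ_0''=∞$, i.e. $Φ_0''\notin L^1((0,∞))$.

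For the second stage, I would apply Lemma~\ref{lem:adjustPhi} with $f:=Φ_0''\in C([0,∞))$ and $g(x):=\f1x\in C((0,∞))$; its hypotheses hold because $f,g\ge0$, $\lim_{x\to 0^+}g(x)=∞$, $f\notin L^1((0,∞))$ and $g\notin L^1((1,∞))$. This provides $h\in C([0,∞))$ with $0\le h(x)\le\f1x$ and $\int_0^x h\le\int_0^x Φ_0''$ for all $x>0$, and with $h\notin L^1((0,∞))$. Setting $Φ(x):=\int_0^x(x-y)h(y)\dy$ for $x\ge0$, one gets $Φ\in C^2([0,∞))$ with $Φ(0)=0$, $Φ'(x)=\int_0^x h(y)\dy\ge0$ and $Φ''=h$, which immediately yields \eqref{Phi:regularity} and \eqref{Phi:Phi2leq}. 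Moreover $Φ'(x)=\int_0^x h\to∞$ since $0\le h\notin L^1((0,∞))$, and because $Φ$ is convex and nonnegative the tangent-line inequality at $\f x2$ gives $Φ(x)\ge Φ'(\f x2)\cdot\f x2$, whence $\f{Φ(x)}{x}\to∞$; this is \eqref{Phi:xtoinfty}. Finally, $Φ(x)=\int_0^x\int_0^s h(y)\dy\ds\le\int_0^x\int_0^s Φ_0''(y)\dy\ds=\int_0^x\kl{Φ_0'(s)-Φ_0'(0)}\ds\le Φ_0(x)$ for every $x\ge0$, so $\io Φ(\une)\le\io Φ_0(\une)$ and \eqref{eq:Phiune} follows with $\gc{Phi-init}:=\sup_{ε>0}\io Φ_0(\une)$.

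The only genuinely delicate ingredient is the one already anticipated in the discussion preceding this subsection: one cannot simply take $Φ''(x)=\min\set{Φ_0''(x),\f1x}$, since the pointwise minimum of two functions lying outside $L^1$ may be integrable, which would ruin the superlinear growth. Lemma~\ref{lem:adjustPhi} is precisely what circumvents this, so that here the remaining work reduces to checking that its output integrates up to something dominated by $Φ_0$ -- immediate from $\int_0^x h\le\int_0^x Φ_0''$ -- and to the elementary convexity manipulations transferring $Φ'(x)\to∞$ into $\f{Φ(x)}{x}\to∞$.
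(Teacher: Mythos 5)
Your proposal is correct and follows essentially the same route as the paper: de la Vallée Poussin (Lemma~\ref{lem:vallee-poussin}) applied to the equiintegrable family $\set{\une}$, then Lemma~\ref{lem:adjustPhi} with $f=Ψ''$ and $g(x)=\f1x$, and integrating the resulting $h$ twice. The only (harmless) deviations are cosmetic: you drop the constant $Ψ'(0)$ from $Φ'$ and you spell out explicitly the convexity arguments showing $Ψ''\notin L^1((0,∞))$ and $\f{Φ(x)}{x}\to∞$, which the paper leaves implicit.
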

\begin{proof}
 Due to \eqref{init-eps:conv:u}, $\set{\une\midε>0}$ is equiintegrable. By de la Vallée Poussin's lemma (Lemma~\ref{lem:vallee-poussin}), there is a function $Ψ\colon [0,∞)→[0,∞)$ satisfying \eqref{Phi:regularity}, \eqref{Phi:xtoinfty} and \eqref{eq:Phiune} (with $Φ$ replaced by $Ψ$). Noting that $Ψ''\notin L^1((0,∞))$, we apply Lemma~\ref{lem:adjustPhi} with $f=Ψ''$ and $g(x)=\f1x$, obtaining some function $h$ as described there. We then define $Φ(x)=\int_0^x\kl{Ψ'(0)+\int_0^y h(z) \dz}\dy$ for $x\ge 0$. Then \eqref{Phi:regularity} is satisfied; due to $h\notin L^1((0,∞))$, \eqref{Phi:xtoinfty} holds true; the first part of \eqref{h:nice} ensures \eqref{Phi:Phi2leq}, whereas the second shows that $Φ'(x)\le Ψ'(x)$ and consequently $Φ(x)\le Ψ(x)$ for all $x\ge 0$ and thus \eqref{eq:Phiune}.
\end{proof}

\begin{lemma}\label{lem:Phi-time}
With $Φ$ as introduced in Lemma~\ref{lem:Phi-init}, for every $T>0$ there is $\newgc{Phi}=\gc{Phi}(T)>0$ satisfying 
 \begin{align*}
  \io Φ(\ue\att) \le \gc{Phi} \quad \text{and } \quad \int_0^T\io Φ'(\ue)\ue^2\le \gc{Phi}
 \end{align*}
 for every $ε>0$ and $t\in[0,T)$. 
\end{lemma}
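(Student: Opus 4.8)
The plan is to test the first equation \eqref{approximatesystem:u} with $Φ'(\ue)$ and analyse the resulting ODE for $t\mapsto\io Φ(\ue\att)$. For fixed $ε>0$ the function $\ue$ is a bounded classical solution on $\Ombar\times[0,T]$ by Lemma~\ref{lem:ex}, and $Φ\in C^2$ by \eqref{Phi:regularity}, so every manipulation below is legitimate pointwise and after integration by parts on $(0,T)$; moreover $\ue\att\to\une$ in $C(\Ombar)$ as $t\searrow0$, whence $\io Φ(\ue\att)\to\io Φ(\une)$, so that integrating the identity from $τ$ to $t$ and letting $τ\searrow0$ causes no trouble.

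Carrying out the computation, an integration by parts gives
\[
 \ddt\io Φ(\ue) = -\io Φ''(\ue)|∇\ue|^2 + χ\io Φ''(\ue)\ue\,∇\ue\cdot∇\ve + κ\io Φ'(\ue)\ue - μ\io Φ'(\ue)\ue^2 \qquad\text{on }(0,T).
\]
Young's inequality rewrites the cross-diffusive term as at most $\io Φ''(\ue)|∇\ue|^2 + \f{χ^2}{4}\io Φ''(\ue)\ue^2|∇\ve|^2$, and here \eqref{Phi:Phi2leq} — equivalently $xΦ''(x)\le1$ — enters twice: first it lets the gradient contribution be absorbed into $-\io Φ''(\ue)|∇\ue|^2$, and second it yields $Φ''(\ue)\ue^2\le\ue$, so that the chemotaxis term is controlled by $\f{χ^2}{4}\io\ue|∇\ve|^2$. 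The dissipative quantity $-μ\io Φ'(\ue)\ue^2$ is kept, while the lower-order term $κ\io Φ'(\ue)\ue$ is absorbed via the pointwise bound $κΦ'(x)x\le\f{μ}{2}Φ'(x)x^2 + c$ (immediate if $κ\le0$; for $κ>0$ split according to $x\lessgtr 2κ/μ$ and use monotonicity of $Φ'$). Altogether one obtains, with some constant $c_1>0$,
\[
 \ddt\io Φ(\ue) + \f{μ}{2}\io Φ'(\ue)\ue^2 \le \f{χ^2}{4}\io\ue|∇\ve|^2 + c_1|\Om| \qquad\text{on }(0,T).
\]

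It then remains to control $\int_0^t\io\ue|∇\ve|^2$ on $[0,T)$: splitting $\io\ue|∇\ve|^2\le\f12\io\ue^2+\f12\io|∇\ve|^4$ and invoking Lemma~\ref{lem:u2xt} and Lemma~\ref{lem:nav4xt} bounds it by $\f12\gc{u2xt}(1+T)+\f12\gc{nav4xt}(1+T)^3$. Integrating the differential inequality over $(0,t)$, using \eqref{eq:Phiune} to handle the initial value $\io Φ(\une)\le\gc{Phi-init}$, and discarding the nonnegative terms on the left as needed, gives both asserted estimates with a single constant $\gc{Phi}(T)$ depending only on $T$, $\gc{Phi-init}$, $\gc{u2xt}$, $\gc{nav4xt}$, $χ$, $μ$, $κ$ and $|\Om|$.

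There is no deep obstacle here; the two points needing care are (i) the double exploitation of the tailored inequality $xΦ''(x)\le1$ supplied by Lemma~\ref{lem:Phi-init}, which is precisely what makes the cross-diffusive term harmless without any regularity of $u_0$ beyond $\Lom1$, and (ii) the passage from the identity valid on $(0,T)$ to an inequality integrated down to $t=0$, which is justified by the uniform-on-$\Ombar$ continuity of $\ue$ at the initial time.
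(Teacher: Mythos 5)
Your proposal is correct and follows essentially the paper's own argument: testing \eqref{approximatesystem:u} with $Φ'(\ue)$, absorbing the cross-diffusive term via Young's inequality together with $xΦ''(x)\le 1$, controlling the $κ$-term pointwise at the cost of half the dissipation $μ\io Φ'(\ue)\ue^2$, and integrating in time using Lemma~\ref{lem:nav4xt} and \eqref{eq:Phiune}. The only (harmless) deviations are that you bound the leftover term through $\io \ue|∇\ve|^2\le\f12\io\ue^2+\f12\io|∇\ve|^4$ and Lemma~\ref{lem:u2xt}, whereas the paper instead absorbs $\io \ue^2$ into the remaining dissipation using $Φ'(x)\to\infty$, and that the ``first use'' of $xΦ''(x)\le 1$ you mention (absorbing the gradient part of the cross term) in fact only requires $Φ''\ge 0$.
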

\begin{proof}
 Integration by parts and Young's inequality let us conclude from \eqref{approximatesystem:u} that 
\begin{align}
 \ddt \io Φ(\ue) 
 &= -\io Φ''(\ue)|∇\ue|^2 + χ\io Φ''(\ue)\ue∇\ve\cdot ∇\ue + κ\io Φ'(\ue)\ue - μ\io Φ'(\ue)\ue^2\nn\\
 &\le χ^2\io Φ''(\ue)\ue^2  |∇\ve|^2 + κ\io Φ'(\ue)\ue - μ\io Φ'(\ue)\ue^2\nn\\
 &\le \io (Φ''(\ue))^2\ue^4 + χ^4\io |∇\ve|^4 + κ\io Φ'(\ue)\ue - μ\io Φ'(\ue)\ue^2\label{ddtPhi}
\end{align}
holds on $(0,T)$.
We choose $c_{κ,μ}>0$ such that 
\[
 (κx-μx^2)Φ'(x) \le c_{κ,μ} - \f{μ}2x^2Φ'(x) \qquad \text{for all } x\ge 0
\]
and, relying on \eqref{Phi:xtoinfty}, $c_{Φ,μ}>0$ such that 
\[
 \kl{1-\f{μ}4Φ'(x)}x^2 \le c_{Φ,μ} \qquad \text{for all } x\ge 0,
\]
and use these definitions together with \eqref{Phi:Phi2leq} to turn \eqref{ddtPhi} into 
\begin{align*}
 \ddt \io Φ(\ue)  &\le \io \ue^2 + χ^4\io |∇\ve|^4 + \io c_{κ,μ} - \f{μ}2\io Φ'(\ue)\ue^2\\
 &\le (c_{κ,μ}+c_{Φ,μ})|\Om| - \f{μ}4 \io Φ'(\ue)\ue^2 + χ^4\io |∇\ve|^4\qquad \text{on } (0,T).
\end{align*}
Integration with respect to time results in the claim, due to Lemma~\ref{lem:nav4xt} and the bound \eqref{eq:Phiune} on the value at time $0$ from Lemma~\ref{lem:Phi-init}.
\end{proof}

\subsection{Identification of the limit as \texorpdfstring{$t\searrow 0$}{t to 0}. Estimates for \texorpdfstring{$\uet$}{the time derivative of uε}}\label{sec:identify}
The equiintegrability statement contained in Lemma~\ref{lem:Phi-time} (or resulting from this lemma in the limit $ε\to 0$, cf. proof of Lemma~\ref{lem:ucts}) is sufficient to conclude that for every sequence $(t_k)_{k\inℕ}$ with $t_k\to0$ there is a subsequence $(t_{k_j})_{j\inℕ}$ such that $u(\cdot,t_{k_j})$ converges in $\Lom1$. This does not capture the value of $\lim_{j}u(\cdot,t_{k_j})$ and would still admit different accumulation points of $(u(\cdot,t_k))_{k\inℕ}$. To make sure that this limit coincides with $u_0$, we turn our attention to estimates for $\uet$.

\begin{lemma}\label{lem:normuet}
 There is $\newgc{ut}>0$ such that for every $ε>0$ and $t\in(0,∞)$, 
\[
  \norm[(W_0^{3,3}(\Om))^*]{\uet\att} \le \gc{ut} (1+t) + \gc{ut} \norm[\Lom2]{\ue\att}^2.
\]
\end{lemma}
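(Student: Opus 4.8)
The plan is to test the equation \eqref{approximatesystem:u} against an arbitrary $ϕ\in W_0^{3,3}(\Om)$ and bound each resulting term by $(1+t)$ times a fixed constant plus a constant times $\norm[\Lom2]{\ue\att}^2$, with all constants depending only on the fixed data of the problem. Concretely, for fixed $t>0$ we write
\[
 \io \uet\att\, ϕ = -\io ∇\ue\att\cdot∇ϕ + χ\io \ue\att\,∇\ve\att\cdot∇ϕ + κ\io \ue\att\,ϕ - μ\io \ue^2\att\,ϕ,
\]
and then estimate the four terms separately, using $\norm[\Lom{∞}]{ϕ}\le C\norm[W_0^{3,3}(\Om)]{ϕ}$ and $\norm[\Lom{∞}]{∇ϕ}\le C\norm[W_0^{3,3}(\Om)]{ϕ}$ (valid by Sobolev embedding since $\Om\subset ℝ^2$ and $W^{2,3}(\Om)\hookrightarrow W^{1,∞}(\Om)\hookrightarrow L^{∞}(\Om)$), so that taking the supremum over $ϕ$ with $\norm[W_0^{3,3}(\Om)]{ϕ}\le 1$ yields the claimed dual-norm bound.

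The third and fourth terms are immediate: the zero-order term is controlled by $|κ|\norm[\Lom1]{\ue\att}\norm[\Lom{∞}]{ϕ}\le C\gc{mass}$ using Lemma~\ref{lem:mass}, and the quadratic term by $μ\norm[\Lom2]{\ue\att}^2\norm[\Lom{∞}]{ϕ}\le C\norm[\Lom2]{\ue\att}^2$, both of the required form. The chemotaxis term is handled by $χ\norm[\Lom1]{\ue\att}\,\norm[\Lom{∞}]{∇\ve\att}\,\norm[\Lom{∞}]{∇ϕ}$ — but $\norm[\Lom{∞}]{∇\ve}$ is not directly controlled by the a~priori estimates available so far; the cleaner route is $χ\io \ue\,∇\ve\cdot∇ϕ \le χ\norm[\Lom2]{\ue\att}\,\norm[\Lom2]{∇\ve\att}\,\norm[\Lom{∞}]{∇ϕ}$, and then invoke the pointwise-in-time gradient bound $\io|∇\ve\att|^2\le \gc{nav2}(1+t)$ from Lemma~\ref{lem:vderivativebounds} together with Young's inequality to split $\norm[\Lom2]{\ue}\cdot(1+t)^{1/2}\le \tfrac12\norm[\Lom2]{\ue}^2 + \tfrac12(1+t)$, which lands exactly in the asserted form. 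The remaining (diffusion) term $-\io∇\ue\cdot∇ϕ$ is the only one involving $∇\ue$, which is \emph{not} among our a~priori bounds, so this is the step needing care.

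The way around the diffusion term is to not integrate by parts once more (there is no boundary term available since $ϕ$ is only required to vanish at the boundary, not $∂_{ν}ϕ$), but rather to move one derivative onto $ϕ$ differently: $\io ∇\ue\cdot ∇ϕ = -\io \ue\,Δϕ$, using $∂_{ν}\ue=0$ on $∂\Om$ so that the boundary term vanishes. Then $|\io \ue\,Δϕ|\le \norm[\Lom1]{\ue\att}\,\norm[\Lom{∞}]{Δϕ}\le C\gc{mass}\,\norm[W^{2,∞}(\Om)]{ϕ}$, and since $W^{2,3}(\Om)\hookrightarrow C(\Ombar)$ is not enough for $\norm[\Lom{∞}]{Δϕ}$ we instead use $W_0^{3,3}(\Om)\hookrightarrow W^{3,3}(\Om)\hookrightarrow W^{2,∞}(\Om)$ (again because $n=2$, so $W^{1,3}\hookrightarrow L^{∞}$ and hence $D^2ϕ\in W^{1,3}\hookrightarrow L^{∞}$), giving $|\io∇\ue\cdot∇ϕ|\le C\gc{mass}$. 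This explains the particular choice of the exponent $3$ and order $3$ in the dual space: it is the smallest clean Sobolev scale making $Δϕ$ bounded in two dimensions. Collecting the four bounds and setting $\gc{ut}$ to be a suitable constant times $\max\set{1,\gc{mass},\gc{nav2},μ}$ completes the proof. The main obstacle is thus purely bookkeeping of Sobolev embeddings to justify $\norm[W^{2,∞}(\Om)]{ϕ}\le C\norm[W_0^{3,3}(\Om)]{ϕ}$; no genuinely new estimate on the solution is required, only the mass bound and the pointwise $∇\ve$ bound already established.
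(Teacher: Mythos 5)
Your proposal is correct and follows essentially the same route as the paper: compute the $(W_0^{3,3}(\Om))^*$-norm by testing against $\varphi$ with $\norm[W_0^{3,3}(\Om)]{\varphi}\le 1$, use the embedding $W_0^{3,3}(\Om)\hookrightarrow W^{2,\infty}(\Om)$ to move all derivatives onto $\varphi$, and bound the four terms via Lemma~\ref{lem:mass}, H\"older's and Young's inequalities and the pointwise-in-time estimate $\norm[\Lom2]{∇\ve\att}^2\le \gc{nav2}(1+t)$ from Lemma~\ref{lem:vderivativebounds}. One small correction: in the identity $\io ∇\ue\cdot∇\varphi=-\io \ue\,Δ\varphi$ the boundary term is $\int_{∂\Om}\ue\,∂_{ν}\varphi$, which vanishes not because $∂_{ν}\ue=0$ but because $∂_{ν}\varphi$ has zero trace for $\varphi\in W_0^{3,3}(\Om)$ (or, as in the paper, because one may take $\varphi\in C_c^{\infty}(\Om)$ and conclude by density).
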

\begin{proof}
For any $ε>0$ and $t\in (0,∞)$, the norm of $\uet\att$ in $(W_0^{3,3})^*$ 
can be computed as  
\[
 \norm[(W_0^{3,3}(\Om))^*]{\uet\att} = \sup \lset{\left\lvert\io \uet φ\right\rvert; 
 \; φ\in C_c^{∞}(\Om), \norm[W_0^{3,3}(\Om)]{φ}\le 1}
\]
Due to the embedding $W_0^{3,3}(\Om)\hookrightarrow W^{2,∞}(\Om)$, there is $c_1>0$ such that for any such $φ$, $\norm[\Wom2{∞}]{φ}\le c_1$, and accordingly 
\begin{align*}
 \left\lvert \io \uet φ\right\rvert &= \bigg| \io Δ\ueφ - χ\io ∇\cdot(\ue∇\ve)φ + κ\io \ueφ - μ\io \ue^2φ\bigg|\\
 &= \bigg| \io \ueΔφ - χ\io \ue∇\ve\cdot∇φ + κ\io \ueφ - μ\io \ue^2φ\bigg|\\
 &\le c_1\norm[\Lom1]{\ue} + c_1 |χ|\norm[\Lom1]{\ue∇\ve} + c_1|κ|\norm[\Lom1]{\ue} + c_1μ\norm[\Lom2]{\ue}^2\\
 &\le c_1(1+|κ|) \norm[\Lom1]{\ue} + c_1|χ|\norm[\Lom2]{∇\ve}^2 + c_1(|χ|+μ)\norm[\Lom2]{\ue}^2\\
 &\le c_1(1+|κ|) \gc{mass} + c_1|χ| \gc{nav2}(1+t) + c_1(|χ|+μ)\norm[\Lom2]{\ue}^2 \quad \text{for all } ε>0, t\in(0,∞),
\end{align*}
where we have successively employed integration by parts, Hölder's and Young's inequality and Lemmata \ref{lem:mass} and \ref{lem:vderivativebounds}. 
\end{proof}
For controlling the last summand in the estimate of Lemma~\ref{lem:normuet}, we rely on Lemma~\ref{lem:Phi-time}.
\begin{lemma}\label{lem:usquare-equiint}
 For every $T>0$, the family 
 \[
  \set{[0,T]\ni t\mapsto \norm[\Lom2]{\ue\att}^2 | ε>0}
 \]
 of functions on $[0,T]$ is equiintegrable.
\end{lemma}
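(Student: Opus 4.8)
The plan is to deduce equiintegrability of the family $\{t\mapsto\norm[\Lom2]{\ue\att}^2\}$ on $[0,T]$ directly from the second estimate in Lemma~\ref{lem:Phi-time}, namely $\int_0^T\io Φ'(\ue)\ue^2\le \gc{Phi}$, by exploiting the superlinear growth $Φ'(x)\to\infty$ as $x\to\infty$ to gain a uniform integrable majorant. The point is that $\io\ue^2\att$ itself is bounded on $[\tau,T]$ for any $\tau>0$ (that is the content of the later Lemma~\ref{lem:up}), but near $t=0$ it can be large; the factor $Φ'(\ue)$ in the time-space bound of Lemma~\ref{lem:Phi-time} forces the set of times where $\io\ue^2\att$ is large to be small, which is exactly equiintegrability.

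Concretely, fix $ε>0$ and $t\in(0,T)$ and split $\io\ue^2\att$ according to whether $\ue\att\ge M$ or not, for a threshold $M>0$ to be chosen. On the sublevel set $\{\ue\att<M\}$ we have $\io_{\{\ue<M\}}\ue^2\att\le M^2|\Om|$, a constant. On the superlevel set, using monotonicity of $Φ'$ we estimate $\io_{\{\ue\ge M\}}\ue^2\att\le \f1{Φ'(M)}\io Φ'(\ue)\ue^2\att$. Hence for every measurable $E\subset[0,T]$,
\[
 \int_E\io\ue^2\att\,\dt \le M^2|\Om|\,|E| + \f1{Φ'(M)}\int_0^T\io Φ'(\ue)\ue^2\att\,\dt \le M^2|\Om|\,|E| + \f{\gc{Phi}}{Φ'(M)}.
\]
Given $η>0$, first choose $M$ so large that $\gc{Phi}/Φ'(M)<η/2$, which is possible by \eqref{Phi:xtoinfty}; then choose $δ>0$ so small that $M^2|\Om|\,δ<η/2$. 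For any $E\subset[0,T]$ with $|E|<δ$ this yields $\int_E\io\ue^2\att\,\dt<η$, uniformly in $ε>0$, which is precisely the equiintegrability claim (in the form \eqref{unif-int}).

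There is essentially no obstacle here: the argument is a textbook de la Vallée Poussin--type truncation, and all the work has already been done in establishing the uniform bound of Lemma~\ref{lem:Phi-time}. The only point requiring a line of care is ensuring that the threshold $M$ can be chosen independently of $ε$ — but this is immediate because $Φ$ is a fixed function (constructed in Lemma~\ref{lem:Phi-init}) and the constant $\gc{Phi}$ from Lemma~\ref{lem:Phi-time} is $ε$-independent.
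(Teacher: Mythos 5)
Your argument is correct, and it takes a genuinely different (and more elementary) route than the paper. Both proofs rest on the same input, the bound $\int_0^T\io Φ'(\ue)\ue^2\le \gc{Phi}$ from Lemma~\ref{lem:Phi-time}, but you convert it into equiintegrability by a direct truncation: splitting $\Om$ into $\set{\ue<M}$ and $\set{\ue\ge M}$, using that $Φ'$ is nondecreasing (convexity of $Φ$) and $Φ'(M)\to\infty$, and then choosing first $M$ and then $δ$. This is in effect a hands-on proof of the relevant direction of the de la Vallée Poussin criterion tailored to the situation, and it needs nothing beyond monotonicity and \eqref{Phi:xtoinfty}; the only points of care — that $Φ'(M)>0$ for large $M$ and that $M$, $\gc{Phi}$ are $ε$-independent — you address. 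The paper instead builds a superlinear convex function of the quantity $\norm[\Lom2]{\ue\att}^2$ itself: it sets $F(x)=xΦ'(\sqrt{x/|\Om|})$, passes to $Ψ:=\conv F$ to restore convexity while keeping superlinear growth, and applies Jensen's inequality to get $\int_0^TΨ(\norm[\Lom2]{\ue}^2)\le\int_0^T\io\ue^2Φ'(\ue)\le\gc{Phi}$, from which equiintegrability follows by the standard criterion. What the paper's route buys is the slightly stronger, structured statement that a fixed superlinear function of $t\mapsto\norm[\Lom2]{\ue\att}^2$ has uniformly bounded time integral, in the same spirit as the de la Vallée Poussin machinery already set up in Section~\ref{sec:initreg}; what your route buys is brevity and self-containedness, avoiding the convex-hull construction and the verification that $\conv F$ remains superlinear. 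For the lemma as stated, both are fully adequate.
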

\begin{proof}
 Introducing 
 \[
  F(x):= xΦ'\kl{\sqrt{\f x{|\Om|}}}, \qquad x>0,
 \]
 with $Φ$ from Lemma~\ref{lem:Phi-init}, 
 we know that 
 \[
  F'(x)=Φ'\kl{\sqrt{\nf x{|\Om|}}} + \f12 \sqrt{\nf x{|\Om|}}\,  Φ''\kl{\sqrt{\nf x{|\Om|}}} \ge Φ'\kl{\sqrt{\nf x{|\Om|}}} \quad \text{for all } x>0, 
 \]
 and hence $F'(x)\to \infty$ as $x\to \infty$. In other words, for every $l>0$ there is $x_l>0$ such that $F'>l$ on $(x_l,∞)$. 
 We define 
 \[
  Ψ:=\conv F
 \]
 to be the convex hull of $F$ (cf. e.g. \cite[5.16]{tiel1984convex})
 and observe that $Ψ$ lies above each of the convex functions $x\mapsto l(x-x_l)_+$, so that $\f{Ψ(x)}{x}\to \infty$ as $x\to \infty$.
Its convexity allows us to invoke Jensen's inequality and estimate 
\begin{align*}
 \int_0^T Ψ(\norm[\Lom2]{\ue}^2) dt &= \int_0^T Ψ\kl{\io|\Om|\ue^2\f{dx}{|\Om|}} dt \le 
  \int_0^T \io Ψ(|\Om|\ue^2) \f{dx}{|\Om|}\\
  &\le \int_0^T\io F\kl{|\Om|\ue^2}\f{dx}{|\Om|}
  =\int_0^T\io |\Om|\ue^2 Φ'(\ue)\f{dx}{|\Om|}\\
  &=\int_0^T\io \ue^2 Φ'(\ue)
\end{align*}
for every $ε>0$. 
The final integral is bounded by Lemma~\ref{lem:Phi-time}, and the resulting boundedness of the integral of a superlinear function of $\norm[\Lom2]{\ue}^2$ ensures the latter's equiintegrability, as claimed.
\end{proof}

\begin{lemma}\label{lem:uetequiint}
 For every $T>0$, the family 
 \[
  \set{[0,T]\ni t\mapsto \norm[\Womstar33]{\uet \att} | ε>0}
 \]
 of functions on $[0,T]$ is equiintegrable.
\end{lemma}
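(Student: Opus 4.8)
The plan is to combine the pointwise bound from Lemma~\ref{lem:normuet} with the equiintegrability result of Lemma~\ref{lem:usquare-equiint}. By Lemma~\ref{lem:normuet}, for every $ε>0$ and $t\in(0,T]$ we have
\[
 \norm[\Womstar33]{\uet\att} \le \gc{ut}(1+T) + \gc{ut}\norm[\Lom2]{\ue\att}^2,
\]
so the family in question is dominated (pointwise in $t$, uniformly in $ε$) by the sum of the constant function $\gc{ut}(1+T)$ and $\gc{ut}$ times the family $\set{t\mapsto \norm[\Lom2]{\ue\att}^2\mid ε>0}$.

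First I would recall the elementary fact that equiintegrability on a finite measure space is stable under the operations needed here: a constant function on $[0,T]$ is trivially equiintegrable; multiplication by the fixed constant $\gc{ut}$ preserves equiintegrability; adding a single equiintegrable family to it again yields an equiintegrable family (for a measurable set $M\subset[0,T]$ with $|M|<δ$, one simply estimates $\int_M(\gc{ut}(1+T)+\gc{ut}\norm[\Lom2]{\ue\att}^2)\dt \le \gc{ut}(1+T)δ + \gc{ut}\int_M\norm[\Lom2]{\ue\att}^2\dt$, and each term is made small by choosing $δ$ small, uniformly in $ε$, using Lemma~\ref{lem:usquare-equiint} for the second). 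Finally, domination: if $0\le g_ε\le G_ε$ pointwise and $\set{G_ε}$ is equiintegrable, then so is $\set{g_ε}$, since $\int_M g_ε \le \int_M G_ε$ for every measurable $M$. Applying this with $g_ε(t)=\norm[\Womstar33]{\uet\att}$ (nonnegative, by definition of a norm) and $G_ε$ the right-hand side above concludes the proof.

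There is no real obstacle here: all the analytic work has already been done in Lemmata~\ref{lem:normuet} and \ref{lem:usquare-equiint}, and what remains is the bookkeeping that equiintegrability is preserved under the few stable operations just listed. The only point worth a word of care is to make sure the estimate of Lemma~\ref{lem:normuet} is invoked with a single constant valid on all of $[0,T]$ (replacing the factor $(1+t)$ by the larger $(1+T)$), so that the dominating family is genuinely independent of $t$ in its constant part and manifestly equiintegrable.
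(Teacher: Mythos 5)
Your proposal is correct and follows exactly the route of the paper, which simply states that the claim is a consequence of Lemmata~\ref{lem:normuet} and \ref{lem:usquare-equiint}; you have merely spelled out the routine domination argument that the paper leaves implicit.
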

\begin{proof}
In light of Lemma~\ref{lem:normuet}, this is a simple consequence of Lemma~\ref{lem:usquare-equiint}. 
\end{proof}
This equiintegrability statement can be turned into the following assertion on $\lim_{(ε,t)\to 0} \ue(\cdot,t)$:
\begin{lemma}\label{lem:ue-at0}
 For every $η>0$ there are $t_0>0$ and $ε_0>0$ such that for every $t\in(0,t_0)$ and every $ε\in(0,ε_0)$, 
 \[
  \norm[\Womstar33]{\ue\att - u_0}\le η.
 \]
\end{lemma}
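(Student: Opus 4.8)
The plan is to combine the equiintegrability of $t\mapsto\norm[\Womstar33]{\uet\att}$ from Lemma~\ref{lem:uetequiint} with the convergence $\une\to u_0$ in $\Lom1$ (and hence in $\Womstar33$, by the embedding $\Lom1\hookrightarrow\Womstar33$) from \eqref{init-eps:conv:u}. First I would write, for any $ε>0$ and $t>0$,
\begin{align*}
 \norm[\Womstar33]{\ue\att-u_0} &\le \norm[\Womstar33]{\ue\att-\une} + \norm[\Womstar33]{\une-u_0}\\
 &\le \int_0^t \norm[\Womstar33]{\uet\ats}\,\ds + \norm[\Womstar33]{\une-u_0},
\end{align*}
using that $t\mapsto\ue\att$ is (absolutely) continuous as a $\Womstar33$-valued function with $\ue\att-\une=\int_0^t\uet\ats\,\ds$, which follows from the regularity in Lemma~\ref{lem:ex} together with the fact that $\uet$ is controlled in $\Womstar33$ by Lemma~\ref{lem:normuet}.

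Next, given $η>0$, I would handle the second term by \eqref{init-eps:conv:u}: there is $ε_0>0$ such that $\norm[\Womstar33]{\une-u_0}\le\f{η}2$ for all $ε\in(0,ε_0)$. For the first term, I would fix $T=1$ (say) and apply the equiintegrability from Lemma~\ref{lem:uetequiint}: with $ε':=\f{η}2$ there is $δ>0$ such that for every measurable $M\subset[0,1]$ with $|M|<δ$ and every $ε>0$ one has $\int_M\norm[\Womstar33]{\uet\ats}\,\ds<\f{η}2$. Choosing $t_0:=\min\set{δ,1}$ and taking $M=(0,t)$ for $t\in(0,t_0)$ then yields $\int_0^t\norm[\Womstar33]{\uet\ats}\,\ds<\f{η}2$ uniformly in $ε>0$. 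Combining the two bounds gives $\norm[\Womstar33]{\ue\att-u_0}\le η$ for all $t\in(0,t_0)$ and $ε\in(0,ε_0)$, as claimed.

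The only point requiring a little care is the representation $\ue\att-\une=\int_0^t\uet\ats\,\ds$ as an identity of $\Womstar33$-valued integrals (equivalently, testing \eqref{approximatesystem:u} against a fixed $φ\in W_0^{3,3}(\Om)$ and integrating in time): this is legitimate because, for fixed $ε>0$, $\ue$ is a classical solution on $\Ombar\times(0,∞)$, continuous up to $t=0$ (Lemma~\ref{lem:ex}), and $\uet\att$ lies in $\Womstar33$ with norm locally integrable in $t$ by Lemma~\ref{lem:normuet} and Lemma~\ref{lem:usquare-equiint} (the latter giving local integrability of $t\mapsto\norm[\Lom2]{\ue\att}^2$, hence of $t\mapsto\norm[\Womstar33]{\uet\att}$, even uniformly in $ε$). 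The main (minor) obstacle is thus bookkeeping rather than substance: making sure the fundamental-theorem-of-calculus step is stated for the weak formulation so that the right-hand side is exactly the quantity controlled by Lemma~\ref{lem:uetequiint}. Once that is in place, the estimate is immediate.
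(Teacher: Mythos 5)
Your proposal is correct and follows essentially the same route as the paper's proof: the same triangle-inequality decomposition via $\une$, the bound $\norm[\Womstar33]{\une-u_0}\le\f{η}{2}$ from \eqref{init-eps:conv:u} with the embedding $\Lom1\hookrightarrow\Womstar33$, and the choice of $t_0$ from the equiintegrability in Lemma~\ref{lem:uetequiint} to control $\int_0^t\norm[\Womstar33]{\uet}$. Your extra care about justifying $\ue\att-\une=\int_0^t\uet\ats\,\ds$ as a $\Womstar33$-valued identity is a sound (and welcome) elaboration of a step the paper uses implicitly.
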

\begin{proof}
 Let $η>0$. In accordance with \eqref{init-eps:conv:u} and aided by the embedding $\Lom1\hookrightarrow\Womstar33$, we can find $ε_0>0$ such that 
 \[
  \text{for all } ε\in(0,ε_0): \qquad \norm[\Womstar33]{\une-u_0}\le \f{η}2.
 \]
 Relying on the outcome of Lemma~\ref{lem:uetequiint}, we moreover choose $t_0>0$ such that for all sets $M\subset[0,1]$ with $|M|\le t_0$ and all $ε>0$, 
 \[
  \int_M \norm[\Womstar33]{\uet}\le \f{η}2.
 \]
 For every $t\in(0,t_0)$ and every $ε\in(0,ε_0)$, we then have 
 \begin{align*}
  \norm[\Womstar33]{\ue\att-u_0}&\le \norm[\Womstar33]{\une-u_0}+\norm[\Womstar33]{\ue\att-\une} \\
  &\le \f{η}2+\int_{(0,t)}\norm[\Womstar33]{\uet} \le η.\qedhere
 \end{align*}
\end{proof}

\section{Regularity outside \texorpdfstring{$t=0$}{t=0}. Time-local bounds}
Relying on the spatio-temporal bounds on $|∇\ve|^4$ from Lemma~\ref{lem:nav4xt}, we do not base the first stepping stone of our estimates on the functional $\f12\io \ue^2+\f14\io|∇\ve|^4$ (cf. \cite[Lemma~3.3]{BBTW}), but on $\f12\io \ue^2$ alone. 
The approach adheres to a classical way to obtain estimates in parabolic PDEs (cf. \cite[p.140]{LSU}), but is less used in a chemotaxis context -- the reason often lying in lack of a priori information alike Lemma~\ref{lem:nav4xt}. It avoids having to deal with boundary integrals involving $∂_{ν}|∇\ve|^2$ and thus may be of interest in some related models. Let us note that it would, instead, have been possible to adapt the reasoning of \cite[Lemma~3.3]{BBTW} (resulting in a proof of similar length and simplicity). \\

On the space 
\[
 V:=L^2_{loc}((0,∞);\Wom12)\cap C((0,∞);\Lom2)
\]
we introduce the seminorms 
\[
 \norm[V,t_1,t_2]{φ} = 
 \sup_{t\in(t_1,t_2)}\norm[\Lom2]{φ(t)}+\norm[L^2(\Om\times(t_1,t_2))]{∇φ}, \qquad t_2>t_1>0
\]
and can derive the following inequality (cf. \cite[p.75]{LSU}):
\begin{lemma}\label{lem:estimateVnorm}
There is $\newgc{Vgni}>0$ such that for every $t_1>0$ and $t_2>t_1$ 
\[
 \norm[L^4(\Om\times(t_1,t_2))]{φ}^4\le \gc{Vgni}(1+t_2)\norm[V,t_1,t_2]{φ}^4
\]
holds for every $φ\in V$. 
\end{lemma}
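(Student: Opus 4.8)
The plan is to reduce this space--time estimate to a fixed-time application of the two-dimensional Gagliardo--Nirenberg inequality, followed by integration over $(t_1,t_2)$; this is exactly the mechanism behind the estimate recorded at \cite[p.~75]{LSU}.

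First I would record that, since $\Om\subset ℝ^2$ is bounded and smooth, there is $\newlc{gni}>0$ such that
\[
 \norm[\Lom4]{w}^4 \le \lc{gni}\norm[\Lom2]{∇w}^2\norm[\Lom2]{w}^2 + \lc{gni}\norm[\Lom2]{w}^4 \qquad\text{for all } w\in\Wom12 ;
\]
this follows from the standard two-dimensional inequality $\norm[\Lom4]{w}^2\le c\norm[\Lom2]{∇w}\norm[\Lom2]{w}+c\norm[\Lom2]{w}^2$ upon squaring and using $(a+b)^2\le 2a^2+2b^2$.

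Next, given $φ\in V$ and $0<t_1<t_2$, I note that $φ\att\in\Wom12$ for a.e.\ $t\in(t_1,t_2)$ (because $φ\in L^2_{loc}((0,∞);\Wom12)$) and that $t\mapsto\norm[\Lom2]{φ\att}$ is bounded on $(t_1,t_2)$ (by continuity of $φ$ into $\Lom2$), so that $\norm[V,t_1,t_2]{φ}<\infty$ and there is nothing to prove if this seminorm equals $0$. Applying the displayed inequality with $w=φ\att$ for a.e.\ $t\in(t_1,t_2)$, using Tonelli's theorem to write $\norm[L^4(\Om\times(t_1,t_2))]{φ}^4=\int_{t_1}^{t_2}\norm[\Lom4]{φ\att}^4\dt$, and integrating in time, I would bound the two resulting terms separately: in the first I pull $\sup_{t\in(t_1,t_2)}\norm[\Lom2]{φ\att}^2$ out of the time integral and retain $\int_{t_1}^{t_2}\norm[\Lom2]{∇φ\att}^2\dt$, each of these being at most $\norm[V,t_1,t_2]{φ}^2$; in the second I estimate $\int_{t_1}^{t_2}\norm[\Lom2]{φ\att}^4\dt\le(t_2-t_1)\sup_{t\in(t_1,t_2)}\norm[\Lom2]{φ\att}^4\le t_2\norm[V,t_1,t_2]{φ}^4$. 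Adding the two contributions yields $\lc{gni}\norm[V,t_1,t_2]{φ}^4+\lc{gni}t_2\norm[V,t_1,t_2]{φ}^4=\lc{gni}(1+t_2)\norm[V,t_1,t_2]{φ}^4$, so the claim holds with $\gc{Vgni}:=\lc{gni}$.

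I do not expect a genuine obstacle here; the only points requiring a little care are the measurability of $t\mapsto\norm[\Lom4]{φ\att}^4$ (Tonelli applies since the integrand is nonnegative and measurable for $φ\in V$, and finiteness of the integral is then a by-product of the estimate itself) and the bookkeeping that ensures the final constant depends only on $\Om$ while all growth in $t_2$ is collected into the single factor $1+t_2$.
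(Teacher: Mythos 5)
Your proposal is correct and follows essentially the same route as the paper: apply the two-dimensional Gagliardo--Nirenberg inequality at (almost) every time, integrate over $(t_1,t_2)$, pull the supremum of $\norm[\Lom2]{φ\att}$ out of the first term and bound the second by $(t_2-t_1)\le t_2$ times $\norm[V,t_1,t_2]{φ}^4$. The extra remarks on measurability and Tonelli are fine but not needed beyond what the paper already implicitly uses.
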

\begin{proof}
 We take $\newlc{gni1}>0$ and $\newlc{gni2}>0$ from the Gagliardo-Nirenberg inequality 
 \[
  \norm[\Lom4]{φ}^4\le \lc{gni1}\norm[\Lom2]{∇φ}^2\norm[\Lom2]{φ}^2 + \lc{gni2}\norm[\Lom2]{φ}^4 \qquad \text{for all }φ\in \Wom12.
 \]
Then for every $φ\in V$, $t_2>t_1>0$, 
\begin{align*}
 \norm[L^4(\Om\times(t_1,t_2))]{φ}^4 
 &\le \lc{gni1} \int_{t_1}^{t_2} \norm[\Lom2]{∇φ}^2\norm[\Lom2]{φ}^2 + \lc{gni2}\int_{t_1}^{t_2} \norm[\Lom2]{φ}^4\\
 &\le \lc{gni1} \norm[V,t_1,t_2]{φ}^2\int_{t_1}^{t_2} \norm[\Lom2]{∇φ}^2 + \lc{gni2} (t_2-t_1) \norm[V,t_1,t_2]{φ}^4\\
 &\le (\lc{gni1}+\lc{gni2}(t_2-t_1))\norm[V,t_1,t_2]{φ}^4.\qedhere
\end{align*}
\end{proof}

\begin{lemma}\label{lem:uV}
 For every $T>0$ and $τ\in(0,T)$ there is $\newgc{uV}(τ,T)>0$ such that for every $ε>0$, 
 \[
  \norm[V,τ,T]{\ue} \le \gc{uV}(τ,T).
 \]
\end{lemma}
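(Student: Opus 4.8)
The plan is to test the $u_\varepsilon$-equation \eqref{approximatesystem:u} by $\ue$ over a suitable time interval and extract a differential inequality for $\io \ue^2$, exactly in the spirit of \cite[p.\,140]{LSU}. Testing by $\ue$ and integrating by parts gives, on $(0,\infty)$,
\[
 \f12\ddt\io\ue^2 + \io|\nabla\ue|^2 = \chi\io\ue\nabla\ve\cdot\nabla\ue + \kappa\io\ue^2 - \mu\io\ue^3,
\]
and Young's inequality applied to the cross term (absorbing $\f12\io|\nabla\ue|^2$) leaves
\[
 \f12\ddt\io\ue^2 + \f12\io|\nabla\ue|^2 \le \f{\chi^2}2\io\ue^2|\nabla\ve|^2 + \kappa\io\ue^2.
\]
The troublesome term is $\io\ue^2|\nabla\ve|^2$; by Hölder's inequality it is bounded by $\norm[\Lom4]{\ue}^2\norm[\Lom4]{\nabla\ve}^2$, which I would further estimate via Young's inequality by $c\norm[\Lom4]{\ue}^4 + c\norm[\Lom4]{\nabla\ve}^4$.

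Next I would integrate in time over an interval $(\tau',t)$ with $t\le T$, where $\tau'\in(0,\tau)$ is chosen (using Lemma~\ref{lem:u2xt}, i.e. the bound on $\int_0^T\io\ue^2$) so that $\io\ue^2(\cdot,\tau')$ is controlled by an $\varepsilon$-independent constant for a suitable such $\tau'$ — more precisely, by the mean value theorem there is $\tau'\in(0,\tau)$ with $\io\ue^2(\cdot,\tau')\le \f1\tau\gc{u2xt}(1+T)$. Taking the supremum over $t\in(\tau',T)$ on the left and combining with the spatio-temporal gradient term, this yields
\[
 \norm[V,\tau',T]{\ue}^2 \le c_1(\tau,T) + c_2\int_{\tau'}^{T}\!\!\io\ue^4 + c_3\int_{\tau'}^{T}\!\!\io|\nabla\ve|^4,
\]
where the last integral is bounded by $\gc{nav4xt}(1+T)^3$ via Lemma~\ref{lem:nav4xt}. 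The key device is then Lemma~\ref{lem:estimateVnorm}, which controls $\int_{\tau'}^{T}\io\ue^4 = \norm[L^4(\Om\times(\tau',T))]{\ue}^4$ by $\gc{Vgni}(1+T)\norm[V,\tau',T]{\ue}^4$ — but this is superquadratic in $\norm[V,\tau',T]{\ue}$, so it cannot simply be absorbed.

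The main obstacle, therefore, is this superlinearity: Lemma~\ref{lem:estimateVnorm} gives a fourth power, whereas the left-hand side only carries a square. I expect the resolution to be a local-in-time bootstrap / continuity argument: on a short subinterval the $L^4$-contribution is small by absolute continuity of the integral (using that $\int_{\tau'}^{T}\io|\nabla\ve|^4$ and, after the first step, $\int\io\ue^4$ are finite), so one first obtains the bound on $\norm[V,\tau',\tau'+\delta]{\ue}$ for some $\delta>0$ depending only on $T$ (not on $\varepsilon$), and then iterates finitely many times to cover $(\tau,T)$, each step starting from the now-controlled value $\io\ue^2$ at the previous right endpoint. Passing from $\norm[V,\tau',T]{\ue}$ to $\norm[V,\tau,T]{\ue}$ is trivial since $\tau'<\tau$. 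Assembling the finitely many constants gives $\gc{uV}(\tau,T)$.
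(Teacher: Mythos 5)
Your setup (testing with $\ue$, choosing a good starting time $\tau'\in(0,\tau)$ via Lemma~\ref{lem:u2xt}, and aiming to combine Lemma~\ref{lem:nav4xt} with Lemma~\ref{lem:estimateVnorm}) matches the paper's strategy, but the step where you apply Young's inequality to split $\norm[\Lom4]{\ue}^2\norm[\Lom4]{\nabla\ve}^2$ into $c\norm[\Lom4]{\ue}^4+c\norm[\Lom4]{\nabla\ve}^4$ is exactly where the argument breaks, and the bootstrap you propose to rescue it does not close. After this splitting, the term $c\int\int\ue^4$ enters with a \emph{fixed, non-small} constant, and by Lemma~\ref{lem:estimateVnorm} it is quartic in $\norm[V,\cdot,\cdot]{\ue}$ against a quadratic left-hand side; absorption is impossible, and a smallness/continuity argument would require the product of this coefficient with the (non-small) data to be below a threshold, which it is not. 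Shrinking the time interval does not repair this, for two reasons. First, there is no $\varepsilon$-uniform control whatsoever of $\int\int\ue^4$ on any interval prior to this lemma -- such control is precisely what the $V$-bound together with Lemma~\ref{lem:estimateVnorm} is supposed to deliver -- so invoking ``absolute continuity of the integral'' of $\io\ue^4$ uniformly in $\varepsilon$ is circular: for each fixed $\varepsilon$ the integral is finite, but that gives a $\delta(\varepsilon)$, not a $\delta$ depending only on $T$. Second, even for the $\nabla\ve$-term your claim is too strong: Lemma~\ref{lem:nav4xt} bounds only the total integral $\int_0^T\io|\nabla\ve|^4$ uniformly in $\varepsilon$; it does not yield equi-absolute-continuity in time, i.e.\ no $\varepsilon$-independent $\delta$ with $\int_t^{t+\delta}\io|\nabla\ve|^4$ small.

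The repair is to \emph{not} split the product. Applying H\"older in space-time directly,
\begin{equation*}
 \int_{t_1}^{t_2}\io \ue^2|\nabla\ve|^2 \le \kl{\int_{t_1}^{t_2}\io \ue^4}^{\f12}\kl{\int_{t_1}^{t_2}\io|\nabla\ve|^4}^{\f12} \le \sqrt{\gc{Vgni}(1+T)}\,\norm[V,t_1,t_2]{\ue}^2\kl{\int_{t_1}^{t_2}\io|\nabla\ve|^4}^{\f12},
\end{equation*}
the $u$-contribution appears only to the \emph{second} power of the $V$-seminorm, i.e.\ with the same power as the left-hand side, multiplied by the factor $\kl{\int_{t_1}^{t_2}\io|\nabla\ve|^4}^{1/2}$. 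One then partitions $(t_0,T)$ by points $t_k$ chosen \emph{for each} $\varepsilon$ so that each piece carries a fixed small amount of $\int\int|\nabla\ve|^4$; the number of pieces is bounded independently of $\varepsilon$ by Lemma~\ref{lem:nav4xt}. On each piece the coefficient in front of $\norm[V,t_k,t_{k+1}]{\ue}^2$ is at most $\f12$, so it can be absorbed, and a finite iteration over the at most $\varepsilon$-independently many pieces (each step feeding the endpoint value $\io\ue^2(\cdot,t_{k+1})$ into the next) yields the claim. This is the paper's proof; your choice of the starting time $\tau'$ and the final assembly of constants are fine, but as written the core estimate does not go through.
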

\begin{proof}
 We fix $T>0$ and $τ\in(0,T)$. 
 For any $ε>0$ testing \eqref{approximatesystem:u} by $\ue$, we see that with 
 $\newlc{logistic}=\f1{|\Om|}\sup_{x\ge 0} (κx^2-\f{μ}2x^3)$ and after an application of Young's inequality, 
 \begin{align*}
  \f12\ddt\io \ue^2 \le -\f12\io |∇\ue|^2 + \f{χ^2}2\io \ue^2|∇\ve|^2 + \lc{logistic} - \f{μ}2\io \ue^3\quad\text{in } (0,T).
 \end{align*}
Integration with respect to time, Hölder's inequality and Lemma~\ref{lem:estimateVnorm} show that hence for every $ε>0$, $t_1>0$, $t_2\in(t_1,T]$ and $t\in(t_1,t_2)$, 
\begin{align*}
 \max&\setl{\io \ue^2\att, \int_{t_1}^t\io |∇\ue|^2} +μ\int_{t_1}^{t}\io \ue^3\le \io \ue^2(\cdot,t_1) + χ^2\int_{t_1}^t\io \ue^2|∇\ve|^2+2\lc{logistic}(t-t_1)\\
 &\le \io \ue^2(\cdot,t_1)+ χ^2\kl{\int_{t_1}^{t_2} \io \ue^4}^{\f12}\kl{\int_{t_1}^{t_2}\io |∇\ve|^4}^{\f12} + 2\lc{logistic}(t_2-t_1)\\
 &\le \io \ue^2(\cdot,t_1)+ χ^2\sqrt{\gc{Vgni}(1+t_2)} \norm[V,t_1,t_2]{\ue}^2 \kl{\int_{t_1}^{t_2}\io |∇\ve|^4}^{\f12} + 2\lc{logistic}t_2.
\end{align*}
We let $\newlc{coeffabs}=16\lc{logistic}T$ and $\newlc{coeffv}=4χ^2\sqrt{\gc{Vgni}(1+T)}$ and take the supremum over $t\in(t_1,t_2)$, ending up with 
\begin{equation}\label{Vestimate1}
 \norm[V,t_1,t_2]{\ue}^2 \le 2\norm[\Lom2]{\ue(\cdot,t_1)}^2+ \f{\lc{coeffabs}}4 + \f{\lc{coeffv}}2 \norm[L^4(\Om\times(t_1,t_2))]{∇\ve}^2\norm[V,t_1,t_2]{\ue}^2
\end{equation}
for every $ε>0$, $t_1>0$ and $t_2\in(t_1,T)$.
Aided by Lemma~\ref{lem:u2xt} we pick $t_0=t_0(ε)\in(\f{τ}2,τ)$ such that 
\[
 \io \ue^2(\cdot,t_0) \le \f{2}{τ} \gc{u2xt} (1+τ) =: \newlc{att0}. 
\]
Given $t_k=t_k(ε)\in(0,T]$, we choose $t_{k+1}=t_{k+1}(ε)\in(t_k,T)$ such that 
\begin{equation}\label{def:tk}
\int_{t_k}^{t_{k+1}}\io |∇\ve|^4 = \f1{\lc{coeffv}^2}
\end{equation}
if such $t_{k+1}$ exists, or $t_{k+1}=T$ otherwise. Then $\int_{t_0}^{t_k} \io |∇\ve|^4 = \f{k}{\lc{coeffv}^2}$ if $t_k<T$, which means that for $k>\newlc{k0}:=1+\lc{coeffv}^2\gc{nav4xt}(1+T)> \lc{coeffv}^2\int_0^T\io |∇\ve|^4$ (the last inequality herein draws upon Lemma~\ref{lem:nav4xt}) clearly $t_k=T$. Moreover, \eqref{Vestimate1} for $t_1=t_k$ and $t_2=t_{k+1}$ for any $ε>0$ turns into 
\begin{equation}\label{Vestimate2}
 \norm[V,t_k,t_{k+1}]{\ue}^2 \le 4\norm[\Lom2]{\ue(\cdot,t_k)}^2 + \f{\lc{coeffabs}}2\quad\text{for }k\inℕ.
\end{equation}
We now introduce $y_{ε,k}:=\norm[V,t_0,t_k]{\ue}^2$ for $k\inℕ$ (and $y_{ε,0}=\io\ue^2(\cdot,t_0)$) and see that by \eqref{Vestimate2}
\[
 y_{ε,k+1} \le \kl{\sqrt{y_{ε,k}}+ \norm[V,t_k,t_{k+1}]{\ue}}^2\le 2y_{ε,k}+2\norm[V,t_k,t_{k+1}]{\ue}^2 \le 2y_{ε,k} + 8y_{ε,k} +c_2,
\]
so that $y_{ε,k}\le 10^ky_{ε,0}+\sum_{l=0}^{k-1} 10^l\lc{coeffabs}$ and thus $y_{ε,k} \le 10^{\lc{k0}}\lc{att0} + \f{10^{\lc{k0}}-1}9\lc{coeffabs}=:\newlc{Vbound}$ for every $k\in ℕ$. In particular, for any $ε>0$, 
\[
 \norm[V,τ,T]{\ue}\le\sqrt{\lc{Vbound}}.\qedhere
\]
\end{proof}

\begin{lemma}\label{lem:navq}
 Let $q\in [1,∞)$. For every $T>0$ and $τ\in(0,T)$ there is $\newgc{navq}(q,τ,T)>0$ such that 
 \[
  \norm[\Lom q]{∇\ve\att} \le \gc{navq}(q,τ,T) \qquad\text{for every } ε>0 \text{ and } t\in(τ,T). 
 \]
\end{lemma}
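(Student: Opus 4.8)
The plan is to exploit semigroup estimates for the Neumann heat semigroup $(e^{tΔ})_{t\ge 0}$ on $\Om\subset ℝ^2$, using $v$ as a mild solution of the second equation. Writing $\ve\att = e^{t(Δ-1)}\vne + \int_0^t e^{(t-s)(Δ-1)}\f{\ue\ats}{1+ε\ue\ats}\,\ds$ and applying $∇$, the standard estimate $\norm[\Lom q]{∇e^{σ(Δ-1)}w}\le \newgc{foo}(1+σ^{-\f12-\f12(\f1r-\f1q)})e^{-σ}\norm[\Lom r]{w}$ (valid for $1\le r\le q\le ∞$, $n=2$, cf.\ \cite[Lemma 1.3]{win_superlinear_damping}-type results) gives, for $t\in(τ,T)$,
\begin{align*}
 \norm[\Lom q]{∇\ve\att} &\le \norm[\Lom q]{∇e^{t(Δ-1)}\vne} + \int_0^t \norm[\Lom q]{∇e^{(t-s)(Δ-1)}\tfrac{\ue\ats}{1+ε\ue\ats}}\ds.
\end{align*}
For the first term, the regularizing property of the semigroup for $t\ge τ$ bounds it in terms of $\norm[\Wom12]{\vne}$, which is controlled by \eqref{init-eps:bound:v}. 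For the integral term, I would split at $s=\f{τ}2$: on $(0,\f{τ}2)$ the elapsed time $t-s\ge \f{τ}2$ is bounded below, so the kernel is bounded and $\f{\ue}{1+ε\ue}\le \ue$ together with Lemma~\ref{lem:mass} suffices; on $(\f{τ}2,t)$ I would use $\norm[\Lom r]{\ue\ats}$ for a suitable $r>1$ (e.g.\ $r=2$), which is controlled on the interval $(\f{τ}4,T)$ by Lemma~\ref{lem:uV} (giving $\sup_{s}\norm[\Lom2]{\ue\ats}\le\gc{uV}$), and then the singularity $\sigma^{-\f12-\f12(\f12-\f1q)}$ is integrable near $\sigma=0$ as long as $\f12+\f12(\f12-\f1q)<1$, i.e.\ $q<\infty$ — precisely the hypothesis. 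Summing the pieces yields a bound depending only on $q,τ,T$ (and the fixed data), as claimed.

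Alternatively — and perhaps cleaner, avoiding case distinctions on $r$ — one could pick any $r\in(1,2]$, use $\sup_{s\in(\f{τ}4,T)}\norm[\Lom r]{\ue\ats}\le\gc{uV}|\Om|^{\f1r-\f12}$ from Lemma~\ref{lem:uV} on the relevant time window, handle the short initial interval via Lemma~\ref{lem:mass} as above, and note that the exponent $\f12+\f12(\f1r-\f1q)<1$ for all $r\le 2$, $q<\infty$, making the time-integral finite. Either way the structure is the same.

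The main obstacle I anticipate is purely bookkeeping: making sure the lower bound $t-s\ge\f{τ}2$ on the first sub-interval is genuinely available (which forces $s\le t-\f{τ}2$, hence the split point must be chosen as $\min\set{\f{τ}2, t-\f{τ}2}$ or one simply notes $t>τ$ so $t-\f{τ}2>\f{τ}2$ and splits at $\f{τ}2$), and that the time interval on which Lemma~\ref{lem:uV} is invoked, namely $(\f{τ}2,T)$ or $(\f{τ}4,T)$, is strictly inside $(0,T)$ so that the lemma applies — there is no real analytic difficulty, only the need to be careful with the interplay of the two time scales $τ$ (the lower cutoff we want in the conclusion) and the integration variable $s$ ranging down to $0$. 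No new ideas beyond what is already assembled in Lemmata~\ref{lem:mass} and \ref{lem:uV} should be required.
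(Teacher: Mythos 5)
Your argument is correct in substance, but it takes a different route from the paper. The paper never writes the Duhamel formula from $t=0$: instead it uses Lemma~\ref{lem:nav4xt} to select, for each $ε$, a time $t_0(ε)\in(\f{τ}3,\f{τ}2)$ at which $\norm[\Lom4]{∇\ve(\cdot,t_0)}$ is controlled, starts the variation-of-constants representation at $t_0$, bounds the first term by $L^4$-to-$L^q$ gradient smoothing, and estimates the integral term exactly as you do, via $\sup_{s>\nf{τ}3}\norm[\Lom2]{\ue\ats}\le\gc{uV}$ from Lemma~\ref{lem:uV}. Your version starts at $t=0$, pays for this by invoking the uniform bound \eqref{init-eps:bound:v} on $\norm[\Wom12]{\vne}$ together with $L^2$-to-$W^{1,q}$ smoothing for $t\ge τ$, and by splitting the Duhamel integral at $s=\f{τ}2$ so that the early part can be handled with the mass bound of Lemma~\ref{lem:mass} and a non-singular kernel; the paper's choice of $t_0(ε)$ makes both of these extra steps unnecessary and keeps the proof independent of how regular $\vne$ is, while your route is arguably more standard and avoids the ``good time slice'' selection. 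One caution: the semigroup exponent you quote is off — in dimension $n=2$ the correct estimate is $\norm[\Lom q]{∇e^{σ(Δ-1)}w}\le c\kl{1+σ^{-\f12-(\f1r-\f1q)}}e^{-σ}\norm[\Lom r]{w}$, with factor $\f n2=1$ rather than $\f12$ in front of $\f1r-\f1q$. With $r=2$ this gives the singularity $(t-s)^{-1+\f1q}$, which is still integrable precisely for $q<∞$, so your main argument survives unchanged; however, your ``alternative'' claim that any $r\in(1,2]$ works fails for the correct exponent, since $\f12+\f1r-\f1q\ge1$ when $r$ is close to $1$ and $q$ is large. With $r=2$ fixed (and the bookkeeping on the split point that you already flagged), the proof is complete.
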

\begin{proof}
 Without loss of generality we assume $q\ge 4$. 
 Because of Lemma~\ref{lem:nav4xt}, for every $ε>0$ we can find $t_0=t_0(ε)\in(\f{τ}3,\f{τ}2)$ such that 
 \[
  \io |∇\ve(\cdot,t_0(ε))|^4 \le \f{2\gc{nav4xt}(1+τ)^3}{τ} =: \newlc{att0}.
 \]
 With $\newlc{sg}>0$ being the constant in the semigroup estimate  \cite[Lemma~1.3]{win_aggregation} we then obtain for every $ε>0$ and every $t\in(τ,T)\subset(t_0(ε),T)$ that 
 \begin{align*}
  \norm[\Lom q]{∇\ve\att} & \le \norm[\Lom q]{∇e^{(t-t_0)(Δ-1)}\ve(\cdot,t_0)} + \lc{sg} \int_{t_0}^t \norm[\Lom q]{∇e^{(t-s)(Δ-1)}\ue\ats} \ds\\
  \le& \lc{sg}e^{-(t-t_0)}\kl{1+(t-t_0)^{-\f14+\f1q}} \norm[\Lom4]{∇\ve(t_0)} \\
  &+ \lc{sg} \int_{t_0}^t \kl{1+(t-s)^{-\f12-(\f12-\f1q)}}e^{-(t-s)} \norm[\Lom2]{\ue\ats}\ds\\
  \le& \sqrt[4]{\lc{att0}}\lc{sg} \kl{1+\kl{\nf{τ}2}^{\f1q-\f14}}+\lc{sg}\newlc{int}\gc{uV}\kl{\f{τ}3,T}
 \end{align*}
 holds for every $t\in(τ,T)$ and every $ε>0$, where we have set $\lc{int}=\int_0^{∞}(1+σ^{-1+\f1q})e^{-σ}\dsigma$ and relied on Lemma~\ref{lem:uV}.
\end{proof}

\begin{lemma}\label{lem:up}
 Let $p\inℕ$. For every $T>0$ and $τ\in(0,T)$ there are $\newgc{up}(p,τ,T)>0$ and $\newgc{upext}(p,τ,T)>0$ such that 
 \[
  \io \ue^p\att \le \gc{up}(p,τ,T) \quad \text{ and } \quad \int_{τ}^T\io \ue^{p+1}\le \gc{upext}(p,τ,T)
 \]
 hold for every $ε>0$ and every $t\in(τ,T)$.
\end{lemma}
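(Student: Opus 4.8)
The plan is to argue by induction on $p$. For $p=1$ there is nothing to do: Lemma~\ref{lem:mass} provides $\io\ue\att\le\gc{mass}$ and Lemma~\ref{lem:u2xt} yields $\int_{τ}^{T}\io\ue^2\le\gc{u2xt}(1+T)$ (even with $τ=0$ allowed). So I would fix $p\ge2$ and assume the lemma already holds with $p$ replaced by $p-1$, for all admissible endpoints. To set up the induction step I would test \eqref{approximatesystem:u} by $\ue^{p-1}$; integrating by parts, writing $\ue^{p-1}∇\ue=\tfrac2p\ue^{p/2}∇\ue^{p/2}$, and applying Young's inequality to the cross-diffusive term so as to absorb part of the diffusion, one arrives at
\[
 \f1p\ddt\io\ue^p + \f{2(p-1)}{p^2}\io\bigl|∇\ue^{p/2}\bigr|^2 + μ\io\ue^{p+1} \le \f{χ^2(p-1)}{2}\io\ue^p|∇\ve|^2 + κ\io\ue^p \qquad\text{on }(0,T).
\]

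The heart of the argument is to dominate the taxis term $\io\ue^p|∇\ve|^2$, and here two-dimensionality is essential. By Hölder's inequality it is at most $\norm[\Lom4]{\ue^{p/2}}^2\norm[\Lom4]{∇\ve}^2$, and on $\kl{\tfrac{τ}2,T}$ the factor $\norm[\Lom4]{∇\ve\att}$ is bounded, uniformly in $ε$, by Lemma~\ref{lem:navq} (applied with its interval parameter equal to $\tfrac{τ}2$). Estimating $\norm[\Lom4]{\ue^{p/2}}^2$ through the two-dimensional Gagliardo--Nirenberg inequality in terms of $\norm[\Lom2]{∇\ue^{p/2}}$ and $\norm[\Lom2]{\ue^{p/2}}^2=\io\ue^p$, and applying Young's inequality once more to absorb the remaining gradient term, I expect to obtain $\f1p\ddt\io\ue^p + μ\io\ue^{p+1} \le c_1\io\ue^p$ on $\kl{\tfrac{τ}2,T}$ with $c_1$ depending on $p$, $χ$, $κ$, $|\Om|$ and $\gc{navq}(4,\tfrac{τ}2,T)$, but not on $ε$. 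Invoking the quadratic absorption via the pointwise bound $c_1x^p\le\tfrac{μ}2x^{p+1}+c_2$ valid for all $x\ge0$ (with $c_2=c_2(c_1,μ,p)$), this simplifies to
\[
 \f1p\ddt\io\ue^p + \f{μ}2\io\ue^{p+1} \le c_2|\Om| \qquad\text{on }\kl{\tfrac{τ}2,T}.
\]

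It remains to convert this into the claimed bounds, which requires choosing a suitable starting time since no $ε$-uniform bound on $\io\ue^p$ is available at $t=0$. Applying the induction hypothesis with exponent $p-1$ on the endpoints $\tfrac{τ}2$, $T$ gives $\int_{τ/2}^{T}\io\ue^p\le\gc{upext}\kl{p-1,\tfrac{τ}2,T}$, so for every $ε>0$ one can pick $t_0=t_0(ε)\in\kl{\tfrac{τ}2,τ}$ with $\io\ue^p(\cdot,t_0)\le\tfrac2τ\gc{upext}\kl{p-1,\tfrac{τ}2,T}=:c_3$. Dropping the nonnegative term $μ\io\ue^{p+1}$ and integrating over $(t_0,t)$ for $t\in(τ,T)$ yields $\io\ue^p\att\le c_3+pc_2|\Om|T$, the first assertion; integrating over $(t_0,T)$ while retaining $\io\ue^{p+1}$ yields $\int_{τ}^{T}\io\ue^{p+1}\le\tfrac2μ\bigl(c_2|\Om|T+\tfrac{c_3}p\bigr)$, and a choice of the two constants from these right-hand sides completes the induction.

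The main difficulty is precisely the absence of an $ε$-independent $L^p$-bound on the approximate data (and, a priori, even at time $τ$); it is overcome by spending the space-time $L^p$-integrability delivered by the previous induction step to locate a starting time $t_0<τ$ at which $\io\ue^p$ is controlled, after which the superlinear degradation alone keeps $\io\ue^p$ bounded on $(t_0,T)$. The cross-diffusion term causes no additional trouble, because the four-fold space-time integrability of $∇\ve$ on intervals bounded away from $t=0$ — Lemma~\ref{lem:navq}, ultimately resting on Lemma~\ref{lem:nav4xt} — is already at our disposal; the only point demanding care is that every constant produced along the way remains independent of $ε$.
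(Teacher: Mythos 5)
Your proof is correct and follows the same skeleton as the paper's: induction on $p$, testing \eqref{approximatesystem:u} by $\ue^{p-1}$, spending the space-time integrability from the previous induction step to select a starting time $t_0(ε)\in(\f{τ}{2},τ)$ at which $\io \ue^p$ is controlled uniformly in $ε$, and then integrating the resulting differential inequality on $(t_0,T)$. The only divergence is the treatment of the taxis term: the paper absorbs $(p-1)χ\io \ue^{p-1}∇\ue\cdot∇\ve$ entirely into the diffusion term and then applies Young's inequality to the resulting $\io \ue^p|∇\ve|^2$ so as to split it into $\f{μ}{3}\io\ue^{p+1}$ plus a multiple of $\io|∇\ve|^{2(p+1)}$, the latter being bounded via Lemma~\ref{lem:navq} with $q=2p+2$; you instead retain part of the diffusion, estimate $\io\ue^p|∇\ve|^2\le \norm[\Lom4]{\ue^{p/2}}^2\norm[\Lom4]{∇\ve}^2$ and invoke the two-dimensional Gagliardo--Nirenberg inequality together with Lemma~\ref{lem:navq} for $q=4$ only, absorbing the gradient contribution and then the lower-order term into the superlinear damping. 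Both routes yield $ε$-independent constants; the paper's version is a bit shorter because Lemma~\ref{lem:navq} is available for arbitrary $q$ anyway, whereas yours needs only the $L^4$ information on $∇\ve$ at the price of an extra interpolation step -- a genuine but minor trade-off, and all steps you indicate (the Young constants, the choice of $t_0(ε)$ via the mean value of $\int_{τ/2}^{T}\io\ue^{p}$, and the final integration retaining $\io\ue^{p+1}$) go through as claimed.
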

\begin{proof}
 For $p=1$, this is true by Lemmata~\ref{lem:mass} and \ref{lem:u2xt}. Assuming that the claim holds for some $p-1\inℕ$ and given $T>0$ and $τ\in(0,T)$, for every $ε>0$ we can find $t_0=t_0(ε)\in(\f{τ}2,τ)$ such that $\io \ue^p(t_0)\le \f2{τ}\gc{upext}(p-1,\f{τ}2,τ)=:\newlc[,p]{att0}$. 
 Moreover, with $\newlc[,p]{logistic}:=\f1{|\Om|}\sup \set{κx^p-\f{μ}3x^{p+1}\mid x\ge 0}$ and by Young's inequality then 
 \begin{align*}
  \f1p\ddt \io \ue^p &= -(p-1)\io \ue^{p-2}|∇\ue|^2 + (p-1)χ\io \ue^{p-1} ∇\ue\cdot∇\ve + κ\io \ue^p - μ\io \ue^{p+1}\\
  &\le 
  \f{μ}3\io \ue^{p+1} + \kl{\f{3}{μ}}^p(p-1)^{p+1}χ^{2p+2}\io |∇\ve|^{2(p+1)} + \lc[,p]{logistic} - \f{2μ}3\io \ue^{p+1}\quad \text{in } (t_0,T), 
 \end{align*}
 so that with $\newlc[,p]{young}=p(\nf{3}{μ})^p(p+1)^{p+1}χ^{2p+2}$ we obtain 
 \begin{align*}
  \io\ue^p\att + \f{pμ}3\int_{τ}^t\io \ue^{p+1} 
  &\le \io \ue^p(\cdot,t_0) + p\lc[,p]{logistic} (T-\f{τ}2) + \lc[,p]{young}\int_{t_0}^t\io |∇\ve|^{2(p+1)}\\
  &\le \lc[,p]{att0} + p\lc[,p]{logistic}T + \lc[,p]{young} \kl{\gc{navq}(2p+2,\nf{τ}2,T)}^{2p+2}=:\gc{up}(p,τ,T)
 \end{align*}
for every $ε>0$ and every $t\in(τ,T]$, and finally set $\gc{upext}(p,τ,T):=\f3{pμ}\gc{up}(p,τ,T)$.
\end{proof}

\begin{lemma}\label{lem:uinfty}
For every $T>0$ and $τ\in(0,T)$ there is $\newgc{uinfty}(τ,T)>0$ such that for every $ε>0$ and every $t\in(τ,T)$,
\[
 \norm[\Lom{∞}]{\ue\att}\le \gc{uinfty}(τ,T).
\]
\end{lemma}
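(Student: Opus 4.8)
The plan is to feed the $ε$-uniform $\Lom p$-bounds on $\ue$ (Lemma~\ref{lem:up}) and on $∇\ve$ (Lemma~\ref{lem:navq}) into the variation-of-constants representation of $\ue$ and to read off the $\Lom{∞}$-bound from $\Lom p$-$\Lom{∞}$ smoothing estimates for the Neumann heat semigroup. The reason this can be done in a single application of that formula, without any further iteration, is that Lemmata~\ref{lem:up} and~\ref{lem:navq} already provide bounds in \emph{every} finite Lebesgue space, on every subinterval of $(0,∞)$ bounded away from $t=0$.

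Concretely, fix $T>0$ and $τ\in(0,T)$, set $t_0:=\f{τ}2$, and write $e^{σΔ}$ for the Neumann heat semigroup on $\Om$. Lemmata~\ref{lem:up} and~\ref{lem:navq} (applied with $\f{τ}4$ in place of $τ$) provide $c_1>0$ such that $\norm[\Lom3]{\ue\ats}+\norm[\Lom6]{\ue\ats}+\norm[\Lom6]{∇\ve\ats}\le c_1$ for all $ε>0$ and $s\in[t_0,T)$, and from \cite[Lemma~1.3]{win_aggregation} we take $c_2>0$ such that
\[ \norm[\Lom{∞}]{e^{σΔ}w}\le c_2\kl{1+σ^{-\f13}}\norm[\Lom3]{w}\qquad\text{and}\qquad \norm[\Lom{∞}]{e^{σΔ}∇\cdot z}\le c_2\kl{1+σ^{-\f56}}\norm[\Lom3]{z} \]
for all $σ>0$, all $w\in\Lom3$ and all $ℝ^2$-valued $z$ with $\norm[\Lom3]{z}<∞$ (note $\f13<1$ and $\f56<1$). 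Since $(\ue,\ve)$ solves \eqref{approximatesystem} classically, for $t\in(τ,T)$ the variation-of-constants formula reads
\[ \ue\att = e^{(t-t_0)Δ}\ue(\cdot,t_0) - χ\int_{t_0}^{t} e^{(t-s)Δ}∇\cdot(\ue∇\ve)\ats\,\ds + \int_{t_0}^{t} e^{(t-s)Δ}\kl{κ\ue-μ\ue^2}\ats\,\ds, \]
and I would bound the three summands in $\Lom{∞}$ separately. Since $t-t_0\ge\f{τ}2$, the first is at most $c_2\kl{1+\kl{\f{τ}2}^{-\f13}}c_1$. For the last, $\norm[\Lom3]{κ\ue-μ\ue^2}\le|κ|c_1+μ c_1^2$ together with $\int_{t_0}^{t}\kl{1+(t-s)^{-\f13}}\ds\le T+\f32 T^{\f23}$ yields a bound. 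For the middle (taxis) summand, Hölder's inequality gives $\norm[\Lom3]{\ue∇\ve}\le\norm[\Lom6]{\ue}\norm[\Lom6]{∇\ve}\le c_1^2$, and $\int_{t_0}^{t}\kl{1+(t-s)^{-\f56}}\ds\le T+6T^{\f16}<∞$, so this term is bounded too. Adding up produces a constant $\gc{uinfty}(τ,T)$ as asserted.

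The only genuinely delicate choice is the Lebesgue exponent for the taxis term: the temporal singularity $(t-s)^{-\f12-\f1r}$ in the $\Lom r$-$\Lom{∞}$ estimate for $e^{(t-s)Δ}∇\cdot(\cdot)$ is integrable at $s=t$ precisely when $r>2$ (the space dimension), which is why $\ue∇\ve$ is measured in $\Lom3$ above; this causes no trouble here only because Lemmata~\ref{lem:up} and~\ref{lem:navq} supply $ε$-uniform control in \emph{all} Lebesgue spaces. Every remaining estimate is routine.
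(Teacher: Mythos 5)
Your argument is correct and is essentially the paper's own proof: both fix $t_0\approx\f{τ}2$, write $\ue\att$ via the variation-of-constants formula, and feed the $ε$-uniform bounds of Lemmata~\ref{lem:up} and~\ref{lem:navq} into the $\Lom p$--$\Lom{∞}$ smoothing estimates of \cite[Lemma~1.3]{win_aggregation}, the temporal singularity of the $∇\cdot$-term being integrable for the chosen exponent. The only (immaterial) differences are the exponent choices -- the paper takes $L^1\to L^{∞}$ for the initial term, bounds the logistic term pointwise by $\sup_{x\ge0}(κx-μx^2)$, and measures $\ue∇\ve$ in $\Lom4$ via $\Lom8\times\Lom8$, whereas you use $\Lom3$ and $\Lom6\times\Lom6$ throughout.
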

\begin{proof}
With $\newlc{sg}>0$ taken from semigroup estimates (cf. \cite[Lemma 1.3]{win_aggregation}) and $\newlc{logistic}:=\sup_{x\ge0} (κx-μx^2)$,
\begin{align*}
 \norm[\Lom\infty]{\ue\att} \le& \lc{sg}(1+(t-t_0)^{-1})\norm[\Lom1]{\ue(\cdot,t_0)} + \lc{logistic}(t-t_0)\\
 &+ \lc{sg}|χ|\int_{t_0}^t\kl{1+(t-s)^{-\f12-\f14}}\norm[\Lom8]{\ue(\cdot,s)}\norm[\Lom8]{∇\ve(\cdot,s)} \ds 
\end{align*}
holds for every $t_0>0$ and $t>t_0$, and we may conclude by Lemmata~\ref{lem:mass}, \ref{lem:navq} and \ref{lem:up}.
\end{proof}

\begin{lemma}\label{lem:ualpha}
 For every $T>0$ and $τ\in(0,T)$ there are $α>0$ and $\newgc{uholder}(τ,T)>0$ such that, for all $ε>0$,  
 \[
  \normm{C^{α,\f{α}2}(\Ombar\times [τ,T])}{\ue}\le \gc{uholder}(τ,T).
 \]
\end{lemma}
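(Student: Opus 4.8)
The plan is to read \eqref{approximatesystem:u} as a scalar linear parabolic equation in divergence form,
\[
 \uet = ∇\cdot\kl{∇\ue - χ\ue∇\ve} + κ\ue - μ\ue^2 \qquad\text{in } \Om\times(0,∞),
\]
whose principal part is the Laplacian (ellipticity constant $1$) and whose lower-order data are, uniformly in $\varepsilon$, as integrable as classical De Giorgi--Nash--Moser theory requires. First I would fix $T>0$ and $τ\in(0,T)$ and work on the cylinder $\Om\times(\f{τ}2,T)$. By Lemma~\ref{lem:uinfty}, applied with $\f{τ}2$ in the role of $τ$, there is $M>0$ independent of $\varepsilon$ with $\norm[\Lom{∞}]{\ue\att}\le M$ for $t\in(\f{τ}2,T)$; by Lemma~\ref{lem:navq}, again with $\f{τ}2$ instead of $τ$, the gradient $∇\ve$ is, for every finite $q$, bounded in $L^q(\Om\times(\f{τ}2,T))$ uniformly in $\varepsilon$. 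Hence the drift field $-χ\ue∇\ve$ lies in $L^q(\Om\times(\f{τ}2,T))$ for every $q<∞$ and the source $κ\ue-μ\ue^2$ in $L^{∞}(\Om\times(\f{τ}2,T))$, all with bounds not depending on $\varepsilon$; and since $\ue\in C^{2,1}(\Ombar\times(0,∞))$ by Lemma~\ref{lem:ex}, it is in particular a weak solution to which this regularity theory applies.

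The second step is to quote the standard Hölder regularity theorem for divergence-form parabolic equations (see, e.g., \cite[Ch.~III]{LSU}). Since $\Om\subset ℝ^2$, the critical space-time integrability exponent is $n+2=4$, so choosing, e.g., $q=5$ makes $-χ\ue∇\ve\in L^{5}(\Om\times(\f{τ}2,T))$ and all structure conditions are met with constants controlled by $M$ and by $\norm[L^{5}(\Om\times(\f{τ}2,T))]{∇\ve}$. As the boundary condition is of conormal (homogeneous Neumann) type and $\Om$ is smooth, the estimate holds up to $∂\Om$; as it is local in time, its validity on $\Om\times(\f{τ}2,T)$ yields a bound on $\Ombar\times[τ,T]$. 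The resulting Hölder exponent $α\in(0,1)$ and the constant depend only on $n$, $q$, the ellipticity constant, $\Om$, $T$, $τ$, $M$ and the above norm of $∇\ve$ — none of which involves $\varepsilon$ — so both $α$ and $\gc{uholder}(τ,T)$ may be chosen uniformly in $\varepsilon$, which is the assertion.

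The main obstacle I anticipate is isolating (or stating precisely) a version of the parabolic Hölder estimate that simultaneously accommodates the divergence-form drift $-χ\ue∇\ve$ with merely $L^q$, $q<∞$, integrability of $∇\ve$ (rather than $L^{∞}$), the conormal boundary condition on the smooth domain, and the passage to times near, but strictly after, the auxiliary initial time $\f{τ}2$, all with constants traced back to the data norms rather than to the solution itself; the bounds needed for this are precisely those of Lemmata~\ref{lem:mass}, \ref{lem:navq}, \ref{lem:up} and \ref{lem:uinfty}, and it is the two-dimensionality of $\Om$ that keeps the required exponent ($q>4$) within the reach of Lemma~\ref{lem:navq}. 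Should a more self-contained route be preferred, one can instead start from the Duhamel representation of $\ue$ on $(\f{τ}2,T)$ and combine the smoothing estimates of \cite[Lemma~1.3]{win_aggregation} (already used in Lemmata~\ref{lem:navq} and \ref{lem:uinfty}) with the embedding $\Wom1p\hookrightarrow C^{α}(\Ombar)$, valid for $p>2$, to get the spatial Hölder bound; the time-Hölder continuity then requires an additional, somewhat longer, estimate of the increments of that representation formula.
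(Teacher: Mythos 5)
Your argument is essentially the paper's own: it rests on exactly the same ingredients (the uniform $L^\infty$ bound for $\ue$ from Lemma~\ref{lem:uinfty} and the uniform $L^q$ bounds for $∇\ve$ with $q$ above the critical exponent $n+2=4$ from Lemma~\ref{lem:navq}) and then invokes classical Hölder regularity theory for divergence-form parabolic equations with conormal boundary data on a time interval starting strictly before $τ$; the paper merely cites \cite[Theorem 4]{diBenedetto} where you cite \cite[Ch.~III]{LSU}, which is the same De Giorgi--Nash--Moser machinery. The proposal is correct and matches the paper's proof.
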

\begin{proof}
 On account of Lemmata~\ref{lem:uinfty} and \ref{lem:navq}, another classical parabolic regularity result (see e.g. \cite[Theorem 4]{diBenedetto}) is applicable and readily yields these Hölder bounds.
\end{proof}

\begin{lemma}\label{lem:uC2}
 For every $T>0$ and $τ\in(0,T)$ there are $α>0$, $\newgc{vC2}(τ,T)>0$ and $\newgc{uC2}(τ,T)>0$ such that, for all $ε>0$,  
 \begin{align*}
  &\normm{C^{2+α,1+\f{α}2}(\Ombar\times [τ,T])}{\ve}\le \gc{vC2}(τ,T),\\
  &\normm{C^{2+α,1+\f{α}2}(\Ombar\times [τ,T])}{\ue}\le \gc{uC2}(τ,T).
 \end{align*}
\end{lemma}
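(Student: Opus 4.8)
The plan is to bootstrap from the Hölder bounds of Lemma~\ref{lem:ualpha} by a standard two-step application of parabolic Schauder theory, first to the $v$-equation and then to the $u$-equation. Fix $T>0$ and $\tau\in(0,T)$, and choose an intermediate time, say $\tau':=\tau/2$, so that all estimates below are applied on $[\tau',T]$ and then restricted to $[\tau,T]$; the reason for this shift is that Schauder estimates on a closed time-interval require control only on a slightly larger interval together with the equation itself (no initial data at $t=\tau$ are needed, since the solutions are genuinely smooth for positive times by Lemma~\ref{lem:ex}).

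First I would treat \eqref{approximatesystem:v}. By Lemma~\ref{lem:ualpha}, $\ue$ is bounded in $C^{\alpha,\alpha/2}(\Ombar\times[\tau',T])$ uniformly in $\ve$; since $x\mapsto x/(1+\ve x)$ is $1$-Lipschitz, the right-hand side $\f{\ue}{1+\varepsilon\ue}$ is bounded in the same Hölder space, uniformly in $\ve>0$. Viewing \eqref{approximatesystem:v} as a linear heat equation $\vet-\Delta\ve+\ve=\f{\ue}{1+\varepsilon\ue}$ with homogeneous Neumann data and a uniformly $C^{\alpha,\alpha/2}$-bounded source, the interior-in-time parabolic Schauder estimate (e.g. \cite[Thm.~IV.5.3]{LSU}) gives a uniform bound on $\ve$ in $C^{2+\alpha,1+\alpha/2}(\Ombar\times[\tau,T])$, which is the first assertion with $\gc{vC2}(\tau,T)$.

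Next I would feed this into \eqref{approximatesystem:u}. Rewrite that equation in non-divergence form,
\[
 \uet = \Delta\ue - \chi\nabla\ve\cdot\nabla\ue - \chi\ue\Delta\ve + \kappa\ue - \mu\ue^2,
\]
and regard it as a linear equation for $\ue$ with coefficients $b:=-\chi\nabla\ve$ and $c:=-\chi\Delta\ve+\kappa-\mu\ue$ and zero right-hand side. By the bound just obtained on $\ve$, both $\nabla\ve$ and $\Delta\ve$ are uniformly bounded in $C^{\alpha,\alpha/2}(\Ombar\times[\tau,T])$, and $\ue$ is uniformly bounded in $C^{\alpha,\alpha/2}$ by Lemma~\ref{lem:ualpha}; hence $b$ and $c$ lie in a bounded set of $C^{\alpha,\alpha/2}(\Ombar\times[\tau,T])$ independent of $\ve$. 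Another application of the parabolic Schauder estimate \cite[Thm.~IV.5.3]{LSU} to this equation (again only interior in time) yields a uniform bound on $\ue$ in $C^{2+\alpha,1+\alpha/2}(\Ombar\times[\tau,T])$, giving the second assertion with $\gc{uC2}(\tau,T)$.

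The routine points here are the passage to the slightly smaller time-interval and the bookkeeping of Hölder norms of products and compositions; these are standard. The only mild subtlety, and the step I would be most careful about, is verifying the compatibility of the Neumann boundary condition and the regularity of $\partial\Omega$ needed so that the boundary Schauder estimate applies up to $\partial\Omega$ uniformly in $\ve$ — but this is guaranteed since $\Omega$ is smooth and the boundary data are homogeneous, so the hypotheses of \cite[Thm.~IV.5.3]{LSU} (or an equivalent up-to-the-boundary Schauder theorem) are met with constants depending only on $\Omega$, $\tau$, $T$ and the already-established Hölder bounds, hence not on $\ve$.
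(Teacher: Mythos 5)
Your proposal is correct and follows essentially the same route as the paper: a two-step parabolic Schauder bootstrap (first the $v$-equation with the Hölder-bounded source $\f{\ue}{1+ε\ue}$, then the $u$-equation with Hölder coefficients built from $∇\ve$, $Δ\ve$ and $\ue$), based on Lemma~\ref{lem:ualpha}. The only cosmetic differences are the reference (the paper cites \cite[Thm.~4.31]{lieberman_parabolicbook} rather than \cite{LSU}) and the way the estimate is localized in time (the paper multiplies by a temporal cutoff vanishing at $t=0$, while you shift to the intermediate interval $[\nf{τ}2,T]$), which amount to the same device.
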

\begin{proof}
Classical Schauder estimates for parabolic equations (like \cite[Thm. 4.31]{lieberman_parabolicbook} -- or, more precisely, a time-local version thereof, i.e. said theorem is applied to $ζ\ve$ or $ζ\ue$, where $ζ$ is a smooth temporal cutoff function with $ζ=0$ at $t=0$) relying on the outcome of Lemma~\ref{lem:ualpha} can firstly be applied to $\ve$ and then to $\ue$, resulting in the above statement.
\end{proof}

\section{Proof of Thm. \ref{thm:main}: Convergence and continuity at zero}
\begin{lemma}\label{lem:conv}
 There are functions $u,v\colon\Om\times[0,∞)\to [0,∞)$ and a sequence $(ε_j)_{j\inℕ}$ such that $ε_j\to 0$ as $j\to \infty$ and 
 \begin{align}
  \une&\to u(\cdot,0) \qquad &&\text{in } \Lom1,
  \label{conv:uL1}\\
  \ue&\to u \qquad &&\text{in } C^{2,1}(\Ombar\times(0,∞)),\label{conv:uC2}\\
  \vne&\to v(\cdot,0) \qquad &&\text{in } 
  \Wom12\label{conv:vW12}\\
  \ve&\to v \qquad &&\text{in } C^{2,1}(\Ombar\times(0,∞)).\label{conv:vC2}
 \end{align}
 as $ε=ε_j\searrow 0$.
\end{lemma}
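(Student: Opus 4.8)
The plan is to assemble Lemma~\ref{lem:conv} as a standard compactness-plus-diagonal-extraction argument, drawing on the uniform bounds collected in Sections~3 and~4. First, the two convergence statements at $t=0$, \eqref{conv:uL1} and \eqref{conv:vW12}, are not really limits to be extracted but rather \emph{definitions}: we set $u(\cdot,0):=u_0$ and $v(\cdot,0):=v_0$, and then \eqref{conv:uL1} is precisely \eqref{init-eps:conv:u} while \eqref{conv:vW12} is precisely \eqref{init-eps:conv:v}. So the real content is the interior convergence \eqref{conv:uC2} and \eqref{conv:vC2}, together with nonnegativity of the limits and the fact that the whole construction can be carried out along a single sequence $ε_j\to 0$.

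For the interior convergence, I would fix an exhaustion of $(0,\infty)$ by the parabolic cylinders $Q_m:=\Ombar\times[\tfrac1m,m]$, $m\in\mathbb N$, $m\ge 2$. On each such cylinder, Lemma~\ref{lem:uC2} furnishes $ε$-independent bounds for $\ue$ and $\ve$ in $C^{2+α,1+α/2}(Q_m)$ for some $α=α(m)>0$. Since the embedding $C^{2+α,1+α/2}(Q_m)\hookrightarrow C^{2,1}(Q_m)$ is compact, any sequence $ε\searrow 0$ admits a subsequence along which both $\ue$ and $\ve$ converge in $C^{2,1}(Q_m)$. Applying this successively for $m=2,3,\dots$ and passing to a diagonal subsequence, we obtain a single sequence $(ε_j)_{j\in\mathbb N}$ with $ε_j\to 0$ and limits $u,v$ defined on $\Om\times(0,\infty)$ such that $\ue\to u$ and $\ve\to v$ in $C^{2,1}(K)$ for every compact $K\subset\Ombar\times(0,\infty)$, which is exactly the meaning of \eqref{conv:uC2} and \eqref{conv:vC2}. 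Nonnegativity of $u$ and $v$ on $\Om\times(0,\infty)$ is inherited from the nonnegativity of $\ue,\ve$ asserted in Lemma~\ref{lem:ex} by pointwise passage to the limit, and nonnegativity at $t=0$ holds by \eqref{init}; this gives the asserted codomain $[0,\infty)$.

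The main (and essentially only) subtlety is bookkeeping: ensuring that the \emph{same} sequence $(ε_j)$ works simultaneously for $u$ and $v$ and for all the cylinders $Q_m$, and that the resulting $u,v$ are well defined as functions on all of $\Om\times(0,\infty)$ rather than just on each $Q_m$ separately. This is handled by the standard diagonal argument together with the observation that limits on overlapping cylinders must agree (uniqueness of limits in $C^{2,1}$), so the pieces glue to globally defined functions. One should also note that at this stage the limit functions need not yet be known to solve \eqref{thesystem} or to attain $(u_0,v_0)$ continuously in any topology finer than what \eqref{init-eps} gives — those points (passing to the limit in \eqref{approximatesystem} on compact subsets of $\Om\times(0,\infty)$ using $C^{2,1}$-convergence, and the $C_w([0,\infty);\Lom1)$ continuity via Lemmata~\ref{lem:Phi-time}, \ref{lem:uetequiint} and \ref{lem:ue-at0}) are deferred to subsequent lemmas in the proof of Theorem~\ref{thm:main}; Lemma~\ref{lem:conv} only records the compactness extraction itself.
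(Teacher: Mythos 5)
Your proposal is correct and follows essentially the same route as the paper: the convergences at $t=0$ are simply the prescribed approximation properties \eqref{init-eps:conv:u} and \eqref{init-eps:conv:v} (with $u(\cdot,0):=u_0$, $v(\cdot,0):=v_0$), while \eqref{conv:uC2} and \eqref{conv:vC2} follow from the uniform $C^{2+α,1+\f{α}2}$ bounds of Lemma~\ref{lem:uC2} by compact embedding and a diagonal extraction over an exhaustion of $(0,∞)$ by compact time intervals. The paper compresses this into one sentence, so your write-up merely makes the standard Arzelà--Ascoli/diagonal bookkeeping explicit.
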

\begin{proof}
 While convergence at $t=0$, that is, \eqref{conv:uL1} and \eqref{conv:vW12}, has been prescribed in \eqref{init-eps:conv:u} and \eqref{init-eps:conv:v}, \eqref{conv:uC2} and \eqref{conv:vC2} along a suitable sequence are easily obtained from Lemma~\ref{lem:uC2}.
\end{proof}

\begin{lemma}\label{lem:ucts}
 The function $u$ from Lemma~\ref{lem:conv} belongs to $C_w([0,∞);\Lom1)$ with $u(\cdot,0)=u_0$.
\end{lemma}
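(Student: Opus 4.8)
The goal is to show $u\in C_w([0,∞);\Lom1)$ and $u(\cdot,0)=u_0$. On any compact subinterval of $(0,∞)$, continuity (even in the strong $\Lom1$ topology) is immediate from \eqref{conv:uC2} together with Lemma~\ref{lem:uC2}, since $u\in C^{2,1}(\Ombar\times(0,∞))$. Hence the only real issue is continuity (in the weak topology of $\Lom1$) at $t=0$, i.e.\ that $u\att \wto u_0$ in $\Lom1$ as $t\searrow 0$. I would first record that $u(\cdot,0)=u_0$ as an element of $\Lom1$ --- this is the statement that has been \emph{prescribed} in \eqref{conv:uL1}/\eqref{init-eps:conv:u} (combined with \eqref{init-eps:bound:u} to see $u_0\in\Lom1$), so it is a matter of matching notation from Lemma~\ref{lem:conv}.

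\textbf{Weak convergence at $t=0$.} To prove $u\att\wto u_0$ in $\Lom1$ as $t\searrow 0$, I would argue by the usual subsequence criterion: it suffices to show that every sequence $t_k\searrow 0$ admits a subsequence $(t_{k_j})_j$ along which $u(\cdot,t_{k_j})\wto u_0$ in $\Lom1$. Fix such a sequence. By Lemma~\ref{lem:Phi-time}, for the fixed $T=1$ we have $\io Φ(\ue\att)\le\gc{Phi}(1)$ for all $ε>0$ and all $t\in[0,1)$; letting $ε=ε_j\searrow0$ and using \eqref{conv:uC2} (which gives pointwise, hence a.e., convergence $\ue(\cdot,t)\to u(\cdot,t)$ for fixed $t>0$) together with Fatou's lemma and the continuity and nonnegativity of $Φ$, one transfers this to $\io Φ(u\att)\le\gc{Phi}(1)$ for every $t\in(0,1)$. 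Since $\f{Φ(x)}x\to\infty$ as $x\to\infty$ by \eqref{Phi:xtoinfty}, the de la Vall\'ee Poussin criterion shows that $\set{u\att\mid t\in(0,1)}$ is equiintegrable in $\Lom1$; combined with the mass bound from Lemma~\ref{lem:mass} (which likewise passes to the limit, giving $\io u\att\le\gc{mass}$), the Dunford--Pettis theorem yields a subsequence $(t_{k_j})_j$ along which $u(\cdot,t_{k_j})$ converges weakly in $\Lom1$ to some $w\in\Lom1$, $w\ge0$.

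\textbf{Identifying the limit.} It remains to show $w=u_0$. For this I invoke Lemma~\ref{lem:ue-at0}: given $η>0$ there are $t_0,ε_0>0$ with $\norm[\Womstar33]{\ue\att-u_0}\le η$ for all $t\in(0,t_0)$, $ε\in(0,ε_0)$. Fixing $t\in(0,t_0)$ and letting $ε=ε_j\searrow 0$, the convergence $\ue\att\to u\att$ from \eqref{conv:uC2} is strong enough (e.g.\ uniform on $\Ombar$) to imply convergence in $\Womstar33$, so $\norm[\Womstar33]{u\att-u_0}\le η$ for all $t\in(0,t_0)$. Hence $u\att\to u_0$ in $\Womstar33$ as $t\searrow 0$, and in particular $u(\cdot,t_{k_j})\to u_0$ in $\Womstar33$. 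On the other hand $u(\cdot,t_{k_j})\wto w$ in $\Lom1$ implies $u(\cdot,t_{k_j})\to w$ in $\Womstar33$ (testing against $\phi\in W_0^{3,3}(\Om)\hookrightarrow C(\Ombar)\subset L^{∞}(\Om)$, which are legitimate test functions for $\Lom1$). Uniqueness of limits in $\Womstar33$ forces $w=u_0$. Since the weak-$\Lom1$ limit along the subsequence is independent of the subsequence, $u\att\wto u_0=u(\cdot,0)$ in $\Lom1$ as $t\searrow 0$, completing the proof.

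\textbf{Main obstacle.} The one point requiring care is the passage to the limit $ε\to0$ in the two a priori estimates (Lemma~\ref{lem:Phi-time} and Lemma~\ref{lem:mass}) to obtain the equiintegrability and boundedness of $\set{u\att\mid t\in(0,1)}$ directly for $u$: one must be sure that the $ε$-uniform bounds, which a priori hold for $\ue$, genuinely descend to $u$, which is where Fatou's lemma and the a.e.\ convergence from \eqref{conv:uC2} enter. Everything else is a routine combination of Dunford--Pettis, de la Vall\'ee Poussin, and the uniqueness-of-limits argument across the continuous embedding $\Lom1\hookrightarrow\Womstar33$.
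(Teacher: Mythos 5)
Your proposal is correct and follows essentially the same route as the paper: reduce to weak-$\Lom1$ continuity at $t=0$ via \eqref{conv:uC2}, transfer the bound of Lemma~\ref{lem:Phi-time} to $u(\cdot,t)$, use equiintegrability (de la Vallée Poussin/Dunford--Pettis) to extract weakly convergent subsequences, and identify the limit as $u_0$ through Lemma~\ref{lem:ue-at0} in $\Womstar33$; the paper merely phrases the subsequence principle as a contradiction argument. (Your claim that weak $\Lom1$ convergence gives norm convergence in $\Womstar33$ is more than you need -- weak-* convergence of the pairings already suffices for uniqueness of the limit -- but it is in any case harmless.)
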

\begin{proof}
 In light of \eqref{conv:uC2} we only have to show continuity of $u$ at $t=0$. Let us assume there were one sequence $(t_k)_{k\inℕ}\searrow 0$ such that $u(\cdot,t_{k_j})\not\wto u(\cdot,0)=u_0$ in the sense of weak $\Lom1$-convergence for every subsequence $(t_{k_j})_{j\inℕ}$ of $(t_k)_{k\inℕ}$. Combining \eqref{conv:uC2} with Lemma~\ref{lem:Phi-time} we then could see that $\io Φ(u(\cdot,t_k))\le \gc{Phi}$ for all $k\inℕ$ and hence $\set{u(\cdot,t_k)|k\inℕ}$ were equiintegrable and we could find a subsequence $(t_{k_l})_{l\inℕ}$ of $(t_k)_{k\inℕ}$ such that $u(\cdot,t_{k_l})$ converged to some $\tilde u\in \Lom1$ as $l\to\infty$, weakly in $\Lom1$, by the Dunford--Pettis theorem (\cite[Thm. IV.8.9]{dunford-schwartz}). But Lemma~\ref{lem:ue-at0} together with \eqref{conv:uC2} would ensure that $\tilde u=u_0$, in contradiction to the choice of $(t_k)_{k\inℕ}$.
\end{proof}

\begin{lemma}\label{lem:vcts}
 The function $v$ from Lemma~\ref{lem:conv} is continuous at $t=0$ as a function with values in $\Lom2$ (with respect to the norm topology) and as function with values in $\Wom12$ equipped with the weak topology and satisfies $v(\cdot,0)=v_0$.
\end{lemma}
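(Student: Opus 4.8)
The strategy is to mimic the structure of Lemma~\ref{lem:ucts} but to work at the level of norm topology in $\Lom2$ (which is possible here because $v_0\in\Wom12$ is honestly regular, unlike $u_0$). First I would recall that $v$ satisfies $v_t=Δv-v+u$ in $\Om\times(0,∞)$ in the classical sense (by \eqref{conv:vC2} and passing to the limit in \eqref{approximatesystem:v}, using that $\f{\ue}{1+ε\ue}\to u$ locally uniformly on $\Ombar\times(0,∞)$ by \eqref{conv:uC2}), and that $v(\cdot,t)\ge 0$. The continuity at $t=0$ in the weak $\Wom12$ topology will follow once I have: (a) an $ε$-uniform bound on $\norm[\Wom12]{\ve\att}$ that is continuous up to $t=0$, hence a bound on $\sup_{t\in[0,t_0)}\norm[\Wom12]{v\att}$ via \eqref{conv:vC2} and lower semicontinuity of the norm, and (b) an identification of $\lim_{t\searrow0}v\att$ in a weaker topology, which combined with the $\Wom12$ bound and the Hilbert-space structure will upgrade weak-$\Lom2$ convergence to weak-$\Wom12$ convergence and norm-$\Lom2$ convergence.

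For (a) I would revisit the computation in the proof of Lemma~\ref{lem:vderivativebounds}: testing \eqref{approximatesystem:v} by $-Δ\ve$ and adding the $\Lom2$ estimate of Lemma~\ref{lem:vl2bound} gives $\ddt\io|∇\ve|^2\le\io\ue^2$ on $(0,∞)$, so integrating from $0$ to $t$ and using \eqref{init-eps:bound:v} together with Lemma~\ref{lem:u2xt} yields $\io|∇\ve\att|^2\le 4\norm[\Wom12]{v_0}^2+\gc{u2xt}(1+t)$ for every $ε>0$ and $t\ge0$; in particular $\limsup_{t\searrow0}\sup_{ε>0}\io|∇\ve\att|^2\le 4\norm[\Wom12]{v_0}^2$, which after passing to the limit $ε=ε_j\to0$ via \eqref{conv:vC2} gives $\limsup_{t\searrow0}\io|∇v\att|^2\le 4\norm[\Wom12]{v_0}^2<\infty$, hence boundedness of $\set{v\att\mid t\in[0,t_0)}$ in $\Wom12$ for some small $t_0>0$ (combined with the $\Lom2$ bound from Lemma~\ref{lem:vl2bound}).

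For (b), I would identify the limit using the equation: for any test function $φ\in C^{∞}(\Ombar)$ with $∂_νφ=0$, $\ddt\io\ve φ=-\io∇\ve\cdot∇φ-\io\ve φ+\io\f{\ue}{1+ε\ue}φ$, and the right-hand side is bounded uniformly in $ε$ and $t$ on $[0,t_0)$ by Lemma~\ref{lem:mass}, Lemma~\ref{lem:vl2bound} and the gradient bound just obtained; hence $t\mapsto\io\ve φ$ is Lipschitz on $[0,t_0)$ uniformly in $ε$, so $\io\ve\att φ\to\io\vne φ$ as $t\searrow0$ uniformly in $ε$, and letting $ε=ε_j\to0$ (using \eqref{conv:vC2} and \eqref{init-eps:conv:v}) gives $\io v\att φ\to\io v_0 φ$ as $t\searrow0$. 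Since such $φ$ are dense in $\Lom2$ and $\set{v\att}_{t\in[0,t_0)}$ is bounded in $\Lom2$, this yields $v\att\wto v_0$ in $\Lom2$ as $t\searrow0$; in particular $v(\cdot,0)=v_0$ in the sense of Lemma~\ref{lem:conv} (it also shows $v_0\ge0$ is consistent). Combining the weak-$\Lom2$ convergence with the $\Wom12$ bound, every sequence $t_k\searrow0$ has a subsequence along which $v(\cdot,t_k)\wto v_0$ in $\Wom12$; the limit being independent of the subsequence gives $v\att\wto v_0$ in $\Wom12$. Finally, to get norm convergence in $\Lom2$: the same testing-by-$-Δ\ve$ computation, read as an equality $\ddt\io|∇\ve|^2+2\io|Δ\ve|^2+2\io|∇\ve|^2=2\io\f{\ue}{1+ε\ue}Δ\ve$, together with $\ddt\io\ve^2+2\io|∇\ve|^2+2\io\ve^2=2\io\f{\ue\ve}{1+ε\ve}$, shows $\ddt\norm[\Wom12]{\ve}^2$ is bounded above uniformly on $[0,t_0)$ after absorbing the right-hand sides by Young; hence $\limsup_{t\searrow0}\norm[\Wom12]{v\att}^2\le\norm[\Wom12]{v_0}^2$ by the same limiting procedure (passing $ε_j\to0$ then $t\searrow0$), and weak convergence plus convergence of norms in the Hilbert space $\Wom12$ gives $v\att\to v_0$ strongly in $\Wom12$, a fortiori in $\Lom2$.

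**Main obstacle.** The delicate point is that $\io\ue^2$ and thus the right-hand side $\io\ue^2$ in the $\ddt\io|∇\ve|^2$ inequality is only controlled in the time-integrated sense of Lemma~\ref{lem:u2xt} near $t=0$ (not pointwise, since $u_0$ is merely $\Lom1$); this is precisely why the gradient estimate must be used in its integrated form $\io|∇\ve\att|^2\le\io|∇\vne|^2+\int_0^t\io\ue^2$, and why I only obtain $\limsup_{t\searrow0}$ control rather than a clean pointwise bound at $t=0$. A second subtlety is the interchange of the two limits $ε=ε_j\to0$ and $t\searrow0$: this is legitimate because the estimates in (a) and in the Lipschitz bound of (b) are \emph{uniform in $ε$}, so one first fixes $t$, passes $ε_j\to0$ using the $C^{2,1}_{loc}$ convergence, and only then sends $t\searrow0$; care is needed that the constant $4\norm[\Wom12]{v_0}^2$ appearing as the $\limsup$ comes from $\norm[\Wom12]{\vne}\le2\norm[\Wom12]{v_0}$ and not from the true initial value, but combined with $\vne\to v_0$ in $\Wom12$ from \eqref{init-eps:conv:v} one does recover the sharp constant $\norm[\Wom12]{v_0}^2$ needed for the norm-convergence argument.
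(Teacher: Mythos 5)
Your derivation of the weak statements is essentially sound, and your identification of the limit takes a genuinely different route from the paper: you show that $t\mapsto\io \ve\att\varphi$ is Lipschitz on $[0,t_0)$ uniformly in $\varepsilon$ for each fixed smooth $\varphi$ and then let $\varepsilon=\varepsilon_j\to0$, whereas the paper argues by contradiction, extracts a $\Wom12$-bounded subsequence from Lemmata~\ref{lem:vl2bound} and \ref{lem:vderivativebounds} together with \eqref{conv:vC2}, and identifies the limit directly in the $\Lom2$ norm via $\norm[\Lom2]{\ve(t_k)-\vne}\le\sqrt{t_k}\sqrt{\gc{vt}(1+t_k)}$, which rests on the spatio-temporal bound for $\vet$ from Lemma~\ref{lem:vt}. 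Both identifications are legitimate; the paper's has the advantage of giving strong $\Lom2$ convergence in one stroke.

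The genuine gap is in your final norm-convergence step (and already in the sharpened limsup claim of part (a)). Lemma~\ref{lem:u2xt} only gives $\int_0^t\io\ue^2\le\gc{u2xt}(1+t)$, and this quantity does \emph{not} tend to zero as $t\searrow0$ uniformly in $\varepsilon$: the proof of that lemma bounds it essentially by $\f1{\mu}\kl{\kappa\gc{mass}t+\io\une-\io\ue\att}$, and since $u_0$ is merely in $\Lom1$, $\io\ue^2\att$ may blow up as $(\varepsilon,t)\to(0,0)$, so the mass defect $\io\une-\io\ue\att$ need not be uniformly small. Consequently your claim $\limsup_{t\searrow0}\sup_{\varepsilon>0}\io|\nabla\ve\att|^2\le4\norm[\Wom12]{v_0}^2$ is unjustified (the correct consequence retains the additive constant $\gc{u2xt}(1+t)$, which suffices for boundedness but not for the limsup), and likewise $\ddt\norm[\Wom12]{\ve}^2$ is, after Young, only controlled by a multiple of $\io\ue^2\att$, which is neither uniformly bounded near $t=0$ nor of uniformly small time integral; hence $\limsup_{t\searrow0}\norm[\Wom12]{v\att}\le\norm[\Wom12]{v_0}$ does not follow, and your route to strong convergence — in particular to the norm-$\Lom2$ continuity that the lemma actually asserts — breaks down. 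The statement is nevertheless within reach of what you have already proved: weak $\Wom12$ convergence $v\att\wto v_0$ together with the compact embedding $\Wom12\hookrightarrow\Lom2$ immediately yields $v\att\to v_0$ in $\Lom2$ (this is what the paper exploits); alternatively, the norm argument could be repaired by first letting $\varepsilon_j\to0$ at fixed $t$ in the mass identity and then using $\io u\att\to\io u_0$ from Lemma~\ref{lem:ucts} to conclude $\lim_{t\to0}\lim_j\int_0^t\io u_{\varepsilon_j}^2=0$, but that additional ingredient is missing from your write-up.
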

\begin{proof}
If this were not the case, we could find a sequence $(t_k)_{k\inℕ}$ such that $t_k\to 0$ and none of the subsequences of $(v(\cdot,t_k))_{k\inℕ}$ converged to $v_0$ (in $\Lom2$ or weakly in $\Wom12$). The bounds in Lemmata~\ref{lem:vl2bound} and \ref{lem:vderivativebounds} together with \eqref{conv:vC2}, however, would ensure boundedness of $(v(t_k))_{k\inℕ}$ in $\Wom12$ and hence convergence to some $\tilde{v}$ along a subsequence, in both $\Lom2$ and weakly in $\Wom12$. 
Because in
\begin{align}\label{vatzero}
\norm[\Lom2]{v(t_k)-v_0}&\le \norm[\Lom2]{v(t_k)-\ve(t_k)}+\norm[\Lom2]{\ve(t_k)-\vne} + \norm[\Lom2]{\vne-v_0} 
\end{align}
we could use Hölder's inequality and Lemma~\ref{lem:vt} to estimate 
\[
 \norm[\Lom2]{\ve(t_k)-\vne}
 \le \sqrt{t_k} \kl{\int_0^{t_k} \norm[\Lom2]{\vet}}^2 \le \sqrt{t_k}\sqrt{\gc{vt}(1+t_k)}\quad \text{for every } k\in ℕ,
\]
independently of $ε>0$, \eqref{init-eps:conv:v} and \eqref{conv:vC2} in conjunction with \eqref{vatzero} would finally serve to identify $\tilde v=v_0$, in contradiction to the choice of $(t_k)_{k\inℕ}$. 
\end{proof}

\begin{proof}[Proof of Thm.~\ref{thm:main}]
 By Lemma~\ref{lem:conv}, along a suitable subsequence, the approximate solutions provided by Lemma~\ref{lem:ex} converge to some pair $(u,v)$ of functions which belong to $C^{2,1}(\Ombar\times(0,∞))$ and solve \eqref{thesystem:u}, \eqref{thesystem:v} and \eqref{thesystem:bdry} by \eqref{conv:uC2} and \eqref{conv:vC2}. That they solve \eqref{thesystem:initial} and belong to $C_w([0,∞);\Lom1)$ follows from Lemma~\ref{lem:ucts} for $u$ and Lemma~\ref{lem:vcts} in case of $v$ (where the statement is even slightly stronger).
\end{proof}

{
\footnotesize
\setlength{\parskip}{0pt}
\setlength{\itemsep}{0pt plus 0.3ex}

\def\cprime{$'$}

}

\end{document}